\documentclass[letter,12pt]{amsart}
\usepackage{extsizes}
\usepackage{blindtext}
\usepackage[utf8]{inputenc}
\usepackage{amsmath}
\usepackage{amssymb}
\usepackage{amsthm}
\usepackage{pst-node}
\usepackage[margin=1in]{geometry}
 \usepackage{float}
 \usepackage{amssymb}
\usepackage{tikz-cd}
\usepackage{amscd}
\usepackage[all,color]{xy}
\usepackage{sseq}
\usepackage{amsfonts}
\usepackage{enumerate}
\usepackage{graphicx}
\usepackage{fancyhdr}
\usepackage{tikz}
\usepackage{graphics}
\usepackage{hyperref}
\usepackage[title]{appendix}
\usepackage[numbers]{natbib}
\usepackage{mathrsfs}

\theoremstyle{plain} \numberwithin{equation}{section}

\newtheorem{theorem}{Theorem}[section]

\newtheorem{corollary}[theorem]{Corollary}

\newtheorem{lemma}[theorem]{Lemma}

\newtheorem{proposition}[theorem]{Proposition}
\theoremstyle{definition}
\newtheorem{definition}[theorem]{Definition}

\newtheorem{remark}[theorem]{Remark}

\def \f-{f^{-1}}
\def \fp-{f^{-1}_\partial}

\usepackage{hyperref}
\usepackage{caption}
\usepackage{subcaption}

\title{On unifying Jones representations of braid groups and Thompson's group}
\author{Yangxiao Luo, Shunyu Wan}
\date{}
\usepackage{natbib}
\usepackage{graphicx}

\begin{document}

\begin{abstract}

We unify Jones’ representations of braid groups and the oriented Thompson's group by introducing the \emph{oriented dyadic braid group}. This finitely generated group contains every braid group and the oriented Thompson's group. We construct an embedding of the oriented Thompson's group into this group such that taking braid closures recovers Jones’ construction of oriented links. We also introduce the dyadic Temperley–Lieb algebra and construct a unitary representation of the oriented dyadic braid group extending the classical Jones representations of braid groups. Along the Thompson group's embedding, its distinguished matrix coefficient agrees with the corresponding coefficient of Jones’ representation of the oriented Thompson's group.
\end{abstract}

\maketitle

\section{Introduction}


The Jones polynomial was introduced by Vaughan Jones as a polynomial invariant of links in his seminal work \cite{MR766964}. Its original construction uses representations of braid groups in Temperley–Lieb algebras. Let $Br_n$ be the $n^{th}$ braid group, and $TL_n$ be the $n^{th}$ Temperley-Lieb algebra. Jones representations of braid groups take the form
$$
J_n: Br_n \to TL_n.
$$

Applying a trace and an appropriate normalization yields the Jones polynomial of the braid closure.

In more recent work \cite{MR3589908}, Jones introduced unitary representations of Thompson's group $F$ and its oriented subgroup $\vec{F}$. His construction associates links to elements of Thompson's group, with the Jones polynomial appearing as a matrix coefficient \cite{MR3924840} \cite{MR4020028} \cite{MR4294116}. Moreover, every link can be obtained from a Thompson's group element \cite{MR3589908} \cite{MR4071378}. These results suggest that Thompson’s groups can play a role analogous to that of braid groups in knot theory.

Connections between braid groups and Thompson groups have been studied through parenthesized braid groups \cite{MR2258261} \cite{MR2176650} \cite{MR1470726} \cite{MR1669949}, braided Thompson's groups \cite{MR2364825} \cite{MR2217649} \cite{MR4441306} \cite{MR4817766}, and larger groups extending both braid groups and
Thompson’s groups
\cite{MR2105950} \cite{MR1090167} \cite{MR2181963}. Here we seek a connection that also respects Jones' representations and the associated construction of links.

More precisely, we ask whether Jones’ two representations reveal a relationship between Thompson’s group and braid groups, and whether the two representations can be unified. To address these questions, we introduce the oriented dyadic braid group $B_{\vec{F}}$.

\begin{definition} \label{def:oriented-dyadic-braid-group}
    The \emph{oriented dyadic braid group} $B_{\vec{F}}$ has braid generators $\sigma_{ab}$ for any two dyadic rational numbers $a<b$ in $(0,1)$, and $\vec{F}$ generators $g_i$ for $i \in \mathbb{Z}_{\geq 0}$, with the following relations:
    \begin{align*}
        \text{braid relations: }&\sigma_{ab}\sigma_{cd}=\sigma_{cd}\sigma_{ab} \ \text{ if $(a-c)(a-d)(b-c)(b-d)>0$,} \\
        &\sigma_{ab}\sigma_{bc}=\sigma_{ac}\sigma_{ab}=\sigma_{bc}\sigma_{ac} \ \text{  for all $a,b,c$ with $a<b<c$}. \\ \text{$\vec{F}$ relations: } &g_n g_l = g_l g_{n+2} \ \text{ for all $l<n$} \\ \text{braid $\vec{F}$ relations: } &g_i\sigma_{ab} = \sigma_{g_i(a)g_i(b)}g_i \ \text{ for all $i,a \text{ and } b$} 
    \end{align*}
\end{definition}

The braid relations used here come from the Birman–Ko–Lee presentation \cite[Proposition 2.1]{MR1654165}, rather than the standard Artin presentation. In Section \ref{sec:BF-as-an-F-equivariant-direct-limit}, we will give a pictorial description of $B_{\vec{F}}$, and show that each $Br_n$ can be naturally embedded in $B_{\vec{F}}.$ As classical braids, we can naturally take closure of oriented dyadic braids. More specifically, there is a well-defined surjective map $\mathfrak{cl}: B_{\vec{F}}\to C_0$ (Definition \ref{def: closed up}), where $C_0$ is the set of links up to adding distant unlinked unknots.


Jones constructed a unitary representation $\pi_L$ of $\vec{F}$ on a certain vector space $\mathcal{V}$, where $\mathcal{V}$ has an inner product $\langle \cdot, \cdot \rangle$ valued in $C_0$, and a canonical element $\xi \in \mathcal{V}$. With this representation, a link class $L(g) \in C_0$ arises as the $\xi$-coefficient of $\pi_L(g)$:
$$
\langle \pi_L(g)(\xi), \xi \rangle = L(g)
$$

We have the following theorem.

\begin{theorem} \label{thm: embedding-preserves-link}
    There is a group embedding $\iota: \vec{F} \to B_{\vec{F}}$ such that for any $g \in \vec{F}$ we have $L(g)=\mathfrak{cl}(\iota(g))$.
\end{theorem}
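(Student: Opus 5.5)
The plan is to define $\iota$ on the generators and treat the link statement separately from injectivity. The natural guess $\iota(g_i)=g_i$ cannot be right, since $\mathfrak{cl}(g_i)$ would then be trivial rather than Jones' link. Instead I would set $\iota(g_i)=\beta_i g_i$ (or $g_i\beta_i$), where $\beta_i$ is a product of braid generators $\sigma_{ab}$ with dyadic endpoints. The correction $\beta_i$ is read off from Jones' picture of $g_i$: a pair of binary forests whose strands become crossings after the Jones link construction (the $\vec{F}$ version with checkerboard/Tait graph). Concretely, each trivalent vertex of the top forest, after rotating the plane diagram so the leaves become strand endpoints, is replaced by a crossing between two adjacent dyadic points. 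The bottom forest is handled the same way, inverted.

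First I would verify that $\iota$ is a homomorphism by checking the $\vec{F}$ relations $\iota(g_n)\iota(g_l)=\iota(g_l)\iota(g_{n+2})$ for $l<n$. The braid–$\vec{F}$ relations let me move $g_l$ past $\beta_n$: $g_l\beta_n=g_l(\beta_n)g_l$, where $g_l(\beta)$ denotes applying $g_l$ to the indices. This reduces the check to an identity among braid words, $\beta_n\, g_n(\beta_l)=\beta_l\, g_l(\beta_{n+2})$, up to commuting far-apart $\sigma$'s, together with the $\vec{F}$ relation itself. The braid identity should follow from the explicit supports of the $\beta$'s and the BKL commutation relation, because $l<n$ means the relevant dyadic intervals are disjoint or nested appropriately.

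Second, injectivity. I would use a retraction-type argument. There should be a homomorphism $B_{\vec{F}}\to \vec{F}$ (or to the homeomorphism group of $[0,1]$) killing all $\sigma_{ab}$ and sending $g_i\mapsto g_i$. This map is well defined, since every relation involving $\sigma$ becomes trivial and the $\vec{F}$ relations are preserved. Composing with $\iota$ gives the identity on generators, hence on $\vec{F}$, so $\iota$ is injective. This step is easy, and it is the reason for choosing $\iota(g_i)=\beta_i g_i$ with $\beta_i$ purely braid.

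Third, the link equality, which I expect to be the main obstacle. Write $g$ as a normal form pair of trees $(T_+,T_-)$, and expand $\iota(g)$ using the relations into the form $\beta\cdot g$ with $\beta$ a braid supported on the dyadic leaves. Then $\mathfrak{cl}(\iota(g))$ should be shown isotopic to Jones' $L(g)$, modulo distant unknots. I would argue inductively on the word length in the $g_i$, showing that the closure of $\beta$ together with the pure-$\vec{F}$ part realizes Jones' diagram. Each caret contributes a crossing, and the closure arcs correspond to the connecting arcs in Jones' construction, with extra strands contributing unlinked unknots, which is why $C_0$ is used. The delicate point is to show that the result is independent of the word representing $g$, or equivalently that both sides are invariant under caret cancellation. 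I would handle this by proving that caret addition changes $\mathfrak{cl}$ by a Reidemeister II move or by adding a split unknot. This matches the corresponding fact for Jones' $L$, which is what makes $L$ well defined on $\vec{F}$.
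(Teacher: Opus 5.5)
Your injectivity argument is fine. The projection $B_{\vec F}\to\vec F$ that kills every $\sigma_{ab}$ is well defined, since it is the projection of $\varinjlim Br\rtimes\vec F$ onto its second factor. It is also exactly what underlies the paper's one-line remark that the $\vec F$-component of $[P,\alpha,g]$ is $g$.

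The gap is in your first and third steps. The braids $\beta_i$ are never specified. The recipe you sketch (one crossing between adjacent dyadic points for each trivalent vertex) does not produce a braid whose closure is $L(g_i)$. Jones' diagram $L(\mathcal I,\mathcal J)=\xi\circ T^\delta_{\mathcal I}\circ g^{\mathcal I}_{\mathcal J}\circ\overline{T^\delta_{\mathcal J}}\circ\overline{\xi}$ is a closed-up tangle with caps and cups, not a braid closure. So a braiding procedure is needed, and it must be compatible with composition in $\vec F$. Without explicit $\beta_i$, the check $\beta_n\,g_n(\beta_l)=\beta_l\,g_l(\beta_{n+2})$ is only asserted.

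More seriously, the link identity cannot be obtained by induction on word length. Neither $\mathfrak{cl}(\iota(gh))$ nor $L(gh)$ is determined by the corresponding data for $g$ and $h$, because the closure of a product of braids is not a function of the closures of the factors. Independence of the chosen word is automatic once $\iota$ is a homomorphism. What you actually need is a closed formula identifying the braid part of $\beta_{i_1}\,g_{i_1}(\beta_{i_2})\,g_{i_1}g_{i_2}(\beta_{i_3})\cdots$ with a diagram built from a tree pair of $g$. The proposal contains no mechanism for producing it.

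The paper sidesteps this by defining $\iota$ on every element at once. If $(\mathcal I,\mathcal J)$ represents $g$, then $\iota(g)=[\mathcal E_+(\mathcal J),\mathfrak b(\mathbb H_{\mathcal I},\mathbb H_{\mathcal J}),g]$, where $\mathfrak b$ is the grid-to-braid operation applied to the stacked half grid diagrams. This is the braiding procedure missing above. Three facts then follow:
\begin{enumerate}
\item Refining the pair only adds backside trivial strands, so $\iota$ is well defined.
\item The braid consists of an upper half depending only on $\mathcal I$ and a mirrored lower half depending only on $\mathcal J$. So multiplicativity is the telescoping identity $\mathfrak b(\mathbb H_{\mathcal J},\mathbb H_{\mathcal K})\,\mathfrak b(\mathbb H_{\mathcal I},\mathbb H_{\mathcal J})=\mathfrak b(\mathbb H_{\mathcal I},\mathbb H_{\mathcal K})$, with the two middle halves cancelling by Reidemeister II moves.
\item The link identity follows from $\mathrm{cl}(\mathfrak b(\mathbb G))=\mathcal L(\mathbb G)$ together with the half grid theorem $\mathcal L(\mathbb H_{\mathcal I},\mathbb H_{\mathcal J})=L(\mathcal I,\mathcal J)$.
\end{enumerate}
In your terms, $g\mapsto\beta(g)$ is a crossed homomorphism built from half braids attached to single trees. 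It is this ``coboundary'' shape that makes both the homomorphism property and the closure computation immediate. Restricting it to the generators would give admissible $\beta_i$, but a generator-by-generator definition leaves the essential content unproved.
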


By modifying the representation $\pi_L$ to another representation $\pi_J$, the $\xi$-coefficient of $\pi_J(g)$ can give rise to the Jones polynomial of $L(g)$. More precisely, there is a function $\langle \pi_J(\cdot)(\xi), \xi \rangle: \vec{F} \to \mathbb{Q}(A)$, the field of rational functions in $A$, such that for any $g \in \vec{F}$,
\begin{equation} \label{eq:thompson-rep-to-jones-polynomial}
    \langle \pi_J(g)(\xi), \xi \rangle = \textup{normalized Jones polynomial of } L(g)
\end{equation}

In Section \ref{sec:extending-braid-representations-to-dyadic-braids}, we define the oriented dyadic Temperley-Lieb algebra $TL_{\vec{F}}$, and an inner product on $TL_{\vec{F}}$ valued in $\mathbb{Q}(A)$. We construct a unitary representation $J_{\vec{F}}: B_{\vec{F}} \to TL_{\vec{F}}$ (Definition \ref{def:jones-dyadic-braid-representation} and Theorem \ref{thm:jones-dyadic-braid-representation-is-unitary}), extending the classical Jones representation of $Br_n$ such that $J_{\vec{F}}|_{Br_n} = J_n$. Specifically, let $\epsilon$ be the unit in $TL_{\vec{F}}$, then we have that for any $b \in B_{\vec{F}}$,
\begin{equation} \label{eq:dyadic-braid-rep-to-kauffman-bracket}
     \langle J_{\vec{F}}(b)(\epsilon), \epsilon \rangle = \textup{normalized Kauffman bracket of } \mathfrak{cl}(b)
\end{equation}

\begin{remark}
    In Eq. \ref{eq:thompson-rep-to-jones-polynomial}, the normalization depends on the element $g\in \vec{F}$, see Eq. \ref{equ: normalized Jones' polynomial for Thompson} for the precise normalization. In particular, this normalized Jones polynomial is a link invariant up to normalization. Similarly, for Eq. \ref{eq:dyadic-braid-rep-to-kauffman-bracket} the normalization depends on the element $b \in B_{\vec{F}}$, see Eq. \ref{eq: normalized Kauffman for B_F} for the precise normalization.
\end{remark}

By combining Theorem \ref{thm: embedding-preserves-link}, Eq. \ref{eq:thompson-rep-to-jones-polynomial} and Eq. \ref{eq:dyadic-braid-rep-to-kauffman-bracket}, we have the following theorem unifying Jones representations of braid groups and Jones representation of Thompson's group.

\begin{theorem} \label{thm:braid-rep-thompson-rep}
    Let $\pi_J$ be Jones representation of the oriented Thompson's group $\vec{F}$, and let $J_{\vec{F}}$ be the representation of $B_{\vec{F}}$ extending Jones representations of braid groups as above. Then for any element $g \in \vec{F}$, we have
    $$
    \langle \pi_J(g)(\xi), \xi \rangle = \langle J_{\vec{F}}(\iota(g))(\epsilon), \epsilon \rangle
    $$
\end{theorem}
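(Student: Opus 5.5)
The proof is a chain of three equalities, all at the level of link invariants. By Eq. \ref{eq:thompson-rep-to-jones-polynomial}, the left-hand side $\langle \pi_J(g)(\xi), \xi \rangle$ is the normalized Jones polynomial of $L(g)$, with the $g$-dependent normalization of Eq. \ref{equ: normalized Jones' polynomial for Thompson}. By Eq. \ref{eq:dyadic-braid-rep-to-kauffman-bracket} applied to $b=\iota(g)$, the right-hand side is the normalized Kauffman bracket of $\mathfrak{cl}(\iota(g))$, with the $b$-dependent normalization of Eq. \ref{eq: normalized Kauffman for B_F}. Theorem \ref{thm: embedding-preserves-link} gives $\mathfrak{cl}(\iota(g)) = L(g)$ in $C_0$. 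So both sides are invariants of the same link class, and the real content is that the two normalizations agree.

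First I will check that both quantities are well defined on $C_0$. Adding a distant unknot multiplies the Kauffman bracket by $\delta=-A^2-A^{-2}$. The normalizations must therefore divide by $\delta$ raised to the number of components, or to the number of strands, or to the number of leaves. I will make explicit which power appears in each case. On the Thompson side this is the number of leaves or vertices of the tree pair diagram of $g$. On the dyadic side it should be the number of strands, which is governed by the dyadic subdivision in which $\iota(g)$ lives.

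Next I will compare the writhe and $\delta$-power factors under $\iota$. Jones' normalization for $\vec F$ involves a factor $(-A^3)^{-w}$, where $w$ is the writhe of the oriented diagram $L(g)$, together with a power of $\delta$ coming from the number of closed loops introduced by the caps and cups. For $\iota(g)$, the braid closure diagram is built from the braid word $\iota(g)$. The writhe counts $\sigma_{ab}^{\pm1}$ with signs, and the number of strands is the size of the subdivision.

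Then I will go through the construction of $\iota$ on the generators $g_i$. This construction comes from the proof of Theorem \ref{thm: embedding-preserves-link}. The aim is to show that, after Reidemeister moves, the closure of $\iota(g)$ is the same planar diagram as Jones' $L(g)$, up to the moves used in that proof. Each type II or III move preserves the writhe. Each type I move shifts it and contributes a factor $-A^{\pm3}$. Each extra unknot contributes a factor $\delta$. I will track these contributions and show that they cancel exactly against the difference between the two normalization formulas.

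I expect this bookkeeping to be the main obstacle: matching the exponent of $\delta$ and the writhe correction generator by generator. The cleanest approach is probably to prove a multiplicative statement. Both normalization factors are characters in the appropriate sense, or at least satisfy compatible cocycle rules under composition. It then suffices to verify agreement on the generators $g_0,g_1$ of $\vec F$ and on the empty braid. If that is not available, I will fall back on a direct diagrammatic comparison using the explicit picture of $\iota(g)$ from Section \ref{sec:BF-as-an-F-equivariant-direct-limit}.
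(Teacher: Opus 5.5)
You correctly reduce the statement to comparing two normalized Kauffman brackets of diagrams of the same link. You also correctly see that $\mathfrak{cl}(\iota(g))=L(g)$ in $C_0$ is not enough, since the bracket is only a regular-isotopy invariant. But the step you defer to ``bookkeeping'' is the whole content of the theorem, and you give no mechanism for it. Write $\delta=-A^2-A^{-2}$ and let $(\mathcal{I},\mathcal{J})$ be an s.d.\ $n$-partition pair representing $g$. The left side is $\delta^{-n}\langle L(\mathcal{I},\mathcal{J})\rangle(A)$, a raw bracket with no writhe factor. The right side is $\delta^{-|\mathcal{E}_+(\mathcal{J})|}\langle cl(\mathfrak{b}(\mathbb{H}_\mathcal{I},\mathbb{H}_\mathcal{J}))\rangle(A)$. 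Exactly two facts are needed: $|\mathcal{E}_+(\mathcal{J})|=n$, and the two diagrams have the same writhe. Jones supplies $w(L(\mathcal{I},\mathcal{J}))=0$. The missing idea is that the braid $\mathfrak{b}(\mathbb{H}_\mathcal{I},\mathbb{H}_\mathcal{J})$ also has writhe $0$. The paper proves this (Lemma~\ref{lem:braid-writhe-zero}) by contact geometry. The grid is a Legendrian link with $tb=-n$ and $rot=0$, and the grid-to-braid braid represents its transverse push-off, so $w-n=sl=tb-rot=-n$. Then both sides equal $\delta^{-(n-1)}V_{L(g)}(A)$.

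Your multiplicative fallback does not work as stated, for three reasons.
\begin{enumerate}
\item The coefficient $g\mapsto\langle\pi_J(g)(\xi),\xi\rangle$ is not a character.
\item The $\delta$-exponents are not functions of $g$. They change with the representing pair, compensated by distant unknots, so they must be matched pair by pair rather than checked on generators.
\item $\iota$ is not defined generator by generator, and $\vec{F}\cong F_3$ has abelianization $\mathbb{Z}^3$, so $g_0,g_1$ do not generate it.
\end{enumerate}
Part of the idea does survive. The exponent sum $e\colon B_{\vec{F}}\to\mathbb{Z}$, $\sigma_{ab}\mapsto 1$, $g_i\mapsto 0$, is a well-defined homomorphism, because every defining relation is homogeneous in the $\sigma$'s, and $e[P,\alpha,f]$ is the writhe of $\alpha$. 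Match the $\delta$-powers and use Jones' $w(L(g))=0$. The remaining discrepancy is then $(-A^3)^{e(\iota(g))}$. Since $\iota$ is a homomorphism, $e\circ\iota\colon\vec{F}\to\mathbb{Z}$ is a homomorphism, so it would suffice to show $e(\iota(g_i))=0$ for $i=0,1,2$ from explicit half-grid pairs. That would be a legitimate, contact-geometry-free alternative to the paper's lemma. Your proposal, however, carries out neither this nor the paper's argument.
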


 Recall that the infinite braid group $B_\infty$  also contains each $Br_n$ as a direct limit of braid groups \cite{MR19087} \cite{MR19088}, and $B_\infty$ cannot be finitely generated. In contrast, for $B_{\vec{F}}$ we have the following property.

\begin{theorem} \label{thm: B_F is finitely generated}
    $B_{\vec{F}}$ is finitely generated.
\end{theorem}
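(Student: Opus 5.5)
Since $\vec{F}$ is finitely generated (by $g_0,g_1,g_2$, as in Golan–Sapir), the $\vec{F}$ relations $g_ng_l=g_lg_{n+2}$ give every $g_i$ as a word in $g_0,g_1,g_2$; it is the subgroup of $B_{\vec{F}}$ generated by the $g_i$, so it is already generated by $g_0,g_1,g_2$. It remains to generate all $\sigma_{ab}$. The braid $\vec{F}$ relation $g\,\sigma_{ab}\,g^{-1}=\sigma_{g(a)g(b)}$ holds for all $g\in\vec{F}$, by induction on word length, using $g_i^{-1}\sigma_{ab}g_i=\sigma_{g_i^{-1}(a)g_i^{-1}(b)}$ from the relation applied to $g_i^{-1}(a),g_i^{-1}(b)$. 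Thus the subgroup $\langle g_0,g_1,g_2,\sigma_{a_1b_1},\dots,\sigma_{a_kb_k}\rangle$ contains every $\sigma_{g(a_j)g(b_j)}$ with $g\in\vec{F}$. The plan is to choose finitely many pairs $(a_j,b_j)$ whose $\vec{F}$-orbits, acting diagonally on pairs $a<b$ of dyadic rationals in $(0,1)$, cover all such pairs.

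The key step is understanding orbits of $\vec{F}$ on ordered pairs. For the full group $F$, the action on pairs $a<b$ of dyadics in $(0,1)$ is transitive, since $F$ acts transitively on $n$-tuples by piecewise linear dyadic maps. The oriented subgroup $\vec{F}$ is not transitive even on points. By Golan–Sapir, $\vec{F}$ is the stabilizer of a subset of dyadics under the natural (twisted) $F$-action, and the $\vec{F}$-orbits of dyadic points are finitely many, determined by a parity invariant of the binary expansion (the coloring of regions in the Tait graph picture). I would first establish, using the paper's Section~2 description of $B_{\vec{F}}$ (or Golan–Sapir's characterization of $\vec{F}$ as the elements whose tree-diagram vertices respect a $2$-coloring), that the diagonal action on pairs has finitely many orbits. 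The argument runs as follows. An element of $F$ sending $(a,b)$ to $(a',b')$ can be built piecewise on $[0,a]$, $[a,b]$, $[b,1]$. By the standard trick that $\vec{F}$ contains the "local" copies of $\vec{F}$ supported on dyadic subintervals of the appropriate type, one can move $a$ and $b$ independently within their orbit classes, given a compatibility invariant of finitely many values. The orbit of $(a,b)$ should then be determined by the pair of point-orbit types, plus possibly a finite invariant of the interval $[a,b]$.

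Granting finite orbits, pick one representative pair from each orbit. The finitely many $\sigma$'s together with $g_0,g_1,g_2$ then generate. Even without a full orbit classification, the braid relation $\sigma_{ac}=\sigma_{ab}\sigma_{bc}\sigma_{ab}^{-1}$ lets us reduce to adjacent-type pairs, which would shrink the needed set.

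The main obstacle is the transitivity/finiteness claim for $\vec{F}$ on pairs, since $\vec{F}$ is a small subgroup with subtle orbits. I would try first to exhibit explicit elements of $\vec{F}$, such as $g_0$ and its conjugates, acting as dyadic affine maps on suitable subintervals. The goal is to show that any pair can be moved to one with $a,b$ among a fixed finite list, such as $a\in\{1/4,3/8,\dots\}$. I would check this against Golan–Sapir's result that $\vec{F}$ acting on dyadics has exactly three orbits (on $(0,1)$, determined by the parity of the sum of binary digits, or similar), and extend it to pairs.
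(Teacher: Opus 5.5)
Your reduction is the same as the paper's. The relations $g_{n+2}=g_l^{-1}g_ng_l$ express every $g_i$ in terms of $g_0,g_1,g_2$, and $g\,\sigma_{cd}\,g^{-1}=\sigma_{g(c)g(d)}$ for all $g\in\vec{F}$. So it suffices to find finitely many pairs whose $\vec{F}$-orbits cover all pairs $a<b$. That orbit statement, however, is the entire substance of the theorem, and you do not prove it. You write ``granting finite orbits'' and offer only a heuristic (``move $a$ and $b$ independently \dots\ given a compatibility invariant''), so there is a genuine gap.

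The heuristic is also not routine. To move $b$ without disturbing $a$, you need the stabilizer of $a$ in $\vec{F}$ to act transitively on same-sign points of $(a,1)$. Local copies of $\vec{F}$ exist only on standard dyadic intervals $[c,d]$, where $0.u\mapsto 0.wu$ shifts the digit-sum parity uniformly. An interval like $[a,1]$ is not standard dyadic, and points on either side of a break like $\frac12$ lie in no common proper standard dyadic interval. So this transitivity has to be proved by hand. It is in fact true, but only as a consequence of the result you are missing. Your remark about using $\sigma_{ac}=\sigma_{ab}\sigma_{bc}\sigma_{ab}^{-1}$ to reduce to ``adjacent'' pairs does not help either: the dyadics are dense, so infinitely many pairs remain.

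What the paper supplies is Proposition~\ref{prop:two-numbers-to-any-ab}. The orbit of $(a,b)$ is determined by the sign pair $(s(a),s(b))$ alone, with no extra interval invariant. The representatives are $(\frac14,\frac12)$, $(\frac12,\frac34)$, $(\frac58,\frac34)$, $(\frac34,\frac78)$, which give the seven generators. The proof builds explicit tree pairs:
\begin{itemize}
\item Lemma~\ref{lem:lower-tree} gives a tree $T^{down}$ having $a$ and $b$ among its vertices, with controlled signs at the first vertices and at the vertices immediately after $a$ and $b$.
\item Lemma~\ref{lem:upper-trees} realizes any sign sequence, up to a global flip fixed by its first entry, as the vertex signs of a tree.
\item The three trees realizing the sign blocks before, between, and after $a,b$ are glued around the representative points to form $T^{up}$. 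This $T^{up}$ has $\frac14,\frac12$ (or the other representatives) in the same positions as $a,b$, and the same vertex sign sequence as $T^{down}$.
\end{itemize}
The pair $(T^{up},T^{down})$ then defines an element of $\vec{F}$ sending the representative pair to $(a,b)$. To complete your argument you need this construction, or an actual proof of your independent-moving claim.
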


The study of the dyadic braid group and the dyadic Temperley-Lieb algebra relies heavily on the theory of concentration structures and $G$-equivariant direct limits, which are briefly introduced in Section \ref{sec:concentration-structures-on-categories} and Section \ref{sec:g-equivariant-direct-limits}. We refer the readers to \cite{MR5118516} for a more thorough treatment. Section \ref{sec:concentration-structures-on-categories} and Section \ref{sec:g-equivariant-direct-limits} are largely self-contained and may be of interest to readers interested in category theory.

\subsection*{Acknowledgement}
YL was supported in part by NSF grant DMS-2405044 to Slava Krushkal, and by The Ohio State University postdoc funding. SW was partially supported by Georgia Tech postdoc funding.

\subsection*{AI Statement} GPT-6 Astra was used to enhance readability and for proofreading. The agent also pointed out an error of normalization factor in an earlier version of the manuscript. The authors take full responsibility for the mathematical arguments and results.

\section{Background: Jones representations of braid groups and Thompson's group}

\subsection{Jones' representations of braid groups}

The $n^{th}$ braid group $Br_n$ has $n-1$ generators $\sigma_1, \sigma_2, \cdots, \sigma_{n-1}$, with the following Artin braid relations:

\begin{enumerate}
    \item $\sigma_i\sigma_j = \sigma_j \sigma_i$ if $|i-j|>1$,
    \item $\sigma_i\sigma_{i+1}\sigma_i = \sigma_{i+1}\sigma_i\sigma_{i+1}$ for $1 \leq i \leq n-2$.
\end{enumerate}

In the above Artin presentation of $Br_n$, the generator $\sigma_i$ is the braid with a single crossing
between its $i^{th}$ endpoint and its $(i+1)^{th}$ endpoint, and with trivial strands elsewhere.

There is another presentation of $Br_n$ with $\frac{n(n-1)}{2}$ generators $\sigma_{ab}$ for $1 \leq a < b \leq n$, with the following Birman-Ko-Lee braid relations \cite{MR1654165}:

\begin{enumerate}
    \item $\sigma_{ab}\sigma_{cd}=\sigma_{cd}\sigma_{ab}$ \ if $(a-c)(a-d)(b-c)(b-d)>0$,

    \item $\sigma_{ab}\sigma_{bc}=\sigma_{ac}\sigma_{ab}=\sigma_{bc}\sigma_{ac}$ \  for all $a,b,c$ with $a<b<c$.
\end{enumerate}

In the Birman-Ko-Lee presentation of $Br_n$, the generator $\sigma_{ab}$ is the braid obtained by first drawing a crossing between its $a^{th}$ endpoint and its $b^{th}$ endpoint, then adding trivial strands elsewhere. We require that the trivial strands between the $a^{th}$ endpoint and the $b^{th}$ endpoint are below the two strands we first drew. Note that $\sigma_{ab}$ can contain more than one crossing, unlike the Artin generators $\sigma_i$.

\vspace{5mm}

Let $A$ be a formal variable in the following sections.

\begin{definition}
    The $n^{th}$ Temperley-Lieb algebra $TL_n$ is the free $\mathbb{Z}[A, A^{-1}]$-module generated by all $(n,n)$-planar tangle diagrams, with the relation $\bigcirc = -A^2 - A^{-2}$. The multiplication in $TL_n$ is induced by concatenating $(n,n)$-planar tangles.
\end{definition}

Note that $TL_n$ is a unital associative algebra. The unit is the $(n,n)$-planar tangle diagram $id_n$ with $n$ trivial strands.

As a $\mathbb{Z}[A, A^{-1}]$-module, $TL_n$ is freely generated by all $(n,n)$-planar tangle diagrams with no circles. The dimension of $TL_n$ is the Catalan number $\frac{(2n)!}{n!(n+1)!}$. In particular, $TL_0$ has dimension $1$. If we identify the empty tangle diagram with $1$, then we have $TL_0 = \mathbb{Z}[A, A^{-1}]$.

Similar to braid groups, Temperley-Lieb algebras have a purely algebraic description. It turns out that the $n^{th}$ Temperley-Lieb algebra $TL_n$ is the unital associative $\mathbb{Z}[A, A^{-1}]$-algebra generated by $n-1$ generators $e_1, e_2,..., e_{n-1}$, with the following relations:
\begin{enumerate}
    \item $e_i^2 = (-A^2 - A^{-2}) e_i$ for $1 \leq i \leq n-1$.
    \item $e_i e_{i+1} e_i = e_i$ for $1 \leq i \leq n-2$.
    \item $e_i e_{i-1} e_i = e_i$ for $2 \leq i \leq n-1$.
    \item $e_i e_j = e_j e_i$ for $1 \leq i, j \leq n-1$ and $|i-j| > 1$.
\end{enumerate}

The generator $e_i$ is the planar tangle with an upper turnback and a lower turnback between its $i^{th}$ endpoint and its $(i+1)^{th}$ endpoint, and with trivial strands elsewhere (see Figure \ref{fig:tl-generator}).

\begin{figure}
    \centering
    \includegraphics[width=0.4\linewidth]{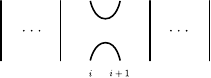}
    \caption{The generator $e_i$ of the Temperley-Lieb algebra.}
    \label{fig:tl-generator}
\end{figure}

\begin{definition}
    The trace map on $TL_n$ is the $\mathbb{Z}[A, A^{-1}]$-module homomorphism $tr: TL_n \to TL_0$, induced by taking closures of $(n,n)$-planar tangles.
\end{definition}

\bigskip

In the following definition, we regard the Temperley-Lieb algebra $TL_n$ as a monoid, by considering its multiplication structure only.

\begin{definition}
    The Jones representation of the braid group $Br_n$ is the monoid homomorphism $J_n: Br_n \to TL_n$ sending $\sigma_i$ to $A (id_n) + A^{-1} e_i$, sending $\sigma_i^{-1}$ to $A^{-1}(id_n) + A e_i$.
\end{definition}

$J_n(b)$ is also called the Kauffman bracket of $b$, denoted as $\langle b \rangle$. The Kauffman bracket
of a single crossing $\sigma_1 \in Br_2$ can be visualized in Figure \ref{fig:kauffman-relation}, which is sometimes
called the Kauffman relation.

\begin{figure}
    \centering
    \includegraphics[width=0.5\linewidth]{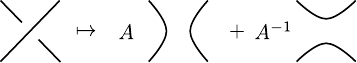}
    \caption{The Kauffman relation.}
    \label{fig:kauffman-relation}
\end{figure}

Let $b$ be a braid in $Br_n$. Define the bracket polynomial of $b$ to be the Laurent polynomial
$$
\langle b \rangle(A) = tr(J_n(b)) \in TL_0 = \mathbb{Z}[A, A^{-1}]
$$

Since $\langle \bigcirc \rangle(A) = -A^2 - A^{-2}$ and the closure of an $(n,n)$-planar tangle ($n \geq 1$) contains at least one circle, $\langle b \rangle(A)$ must be divisible by $A^2 + A^{-2}$.

Let $q = A^{-2}$, then $\langle b \rangle(q^{-\frac{1}{2}})$ is a Laurent polynomial in $\mathbb{Z}[q^{\frac{1}{2}}, q^{-\frac{1}{2}}]$ divisible by $q+q^{-1}$.

\begin{definition} \label{def:jones-polynomial}
    Let $b$ be a braid in $Br_n$. Define the Jones polynomial of $b$ to be the Laurent polynomial
    $$
    V_b(q) = (-1)^{|b|+1} q^{\frac{3w(b)}{2}}\frac{\langle b \rangle(q^{-\frac{1}{2}})}{q+q^{-1}} \in \mathbb{Z}[q, q^{-1}]
    $$
    where $|b|$ is the length of any braid word of $b$, and $w(b)$ is the writhe of the corresponding oriented tangle diagram. By the braid relations, $(-1)^{|b|+1}$ and $w(b)$ are both well-defined.
\end{definition}

Note that each crossing gives a coefficient $A = q^{-\frac{1}{2}}$ or $A^{-1} = q^{\frac{1}{2}}$, and the writhe $w(b)$ has the same parity as $|b|$. Hence the overall coefficient $q^{\frac{3w(b)}{2}}$ in $V_b(q)$ guarantees that $V_b(q)$ is in $\mathbb{Z}[q, q^{-1}]$, not just in $\mathbb{Z}[q^{\frac{1}{2}}, q^{-\frac{1}{2}}]$.

The Kauffman bracket can be generalized to a link diagram as follows. Given a link diagram $L$ with $|L|$ crossings, we resolve all crossings using the Kauffman relation (Figure \ref{fig:kauffman-relation}). There are totally $2^{|L|}$ ways to resolve $L$, and each resolution gives a set of circles with a coefficient $A^k$ for some $k \in \mathbb{Z}$. By assigning a factor $-A^2-A^{-2}$ to each circle and taking the sum the $2^{|L|}$ terms, we obtain the Kauffman bracket of $L$, denoted as $\langle L \rangle (A)$.

Then define the Jones polynomial of $L$ to be
$$
V_L(q) = (-1)^{|L|+1} q^{\frac{3w(L)}{2}}\frac{\langle L \rangle(q^{-\frac{1}{2}})}{q+q^{-1}} \in \mathbb{Z}[q, q^{-1}]
$$

If we take a braid $b$ with closure $L$, then this definition coincides with Definition \ref{def:jones-polynomial}.

We sometimes consider the Jones polynomial as a polynomial of $A$ instead, in which case the Jones polynomial can also be defined as follows:
    $$
    V_L(A) = (-A)^{-3w(L)} \frac{\langle L \rangle(A)}{-A^2-A^{-2}} \in \mathbb{Z}[A^2, A^{-2}].
    $$






\subsection{Jones' representation of Thompson's group}\label{sec: Jones' Thompson rep}

We first give a brief introduction to Thompson's group $F$ and related concepts.

\begin{definition} \label{def: Thompson group}
	Thompson's group $F$ is the group (under composition) of homeomorphisms $g$ from $[0,1]$ to itself satisfying the following conditions:	
	\begin{enumerate}
		\item $g$ is piecewise linear and orientation-preserving.
		\item In the pieces where $g$ is linear, the slope is always a power of $2$.
		\item The breakpoints are dyadic rational numbers in $[0,1]$, i.e. $\frac{k}{2^m}$ for some $m \in \{0,1,2,...\}$ and $k \in \{0,1,2,...,2^m\}$.
	\end{enumerate}
\end{definition}

\begin{definition} \label{s.d. interval and partition}
	A standard dyadic interval (s.d. interval) is an interval of the form $[\frac{k}{2^m}, \frac{k+1}{2^m}]$ for some $m \in \{0,1,2,...\}$ and $k \in \{0,1,2,...,2^m-1\}$. A standard dyadic partition (s.d. partition) on $[0,1]$ is a finite partition $\{0 = a_0 < a_1 < ... < a_n = 1\}$ such that $[a_i, a_{i+1}]$ is an s.d. interval for all $i$.

	We call $a_i$ a breakpoint of the s.d. partition, and call $[a_i, a_{i+1}]$ a subinterval of the s.d. partition. A standard dyadic partition with $n$ subintervals is called a standard dyadic $n$-partition (s.d. $n$-partition). Suppose $\mathcal{K}$ and $\mathcal{I}$ are s.d. partitions, we denote $\mathcal{K} \leq \mathcal{I}$ if $\mathcal{I}$ is a refinement of $\mathcal{K}$ as partitions. In other words, the set of breakpoints of $\mathcal{K}$ is a subset of the set of breakpoints of $\mathcal{I}$.

    Let $\mathfrak{T}$ denote the set of all standard dyadic partitions, then $(\mathfrak{T}, \leq)$ is an ordered set.
\end{definition}

\begin{definition} \label{def: s.d. partition pair representation}
	Suppose that $\mathcal{I}$ is a s.d. $n$-partition with breakpoints $0 = a_0 < a_1 < ... < a_n = 1$, and $\mathcal{J}$ is a s.d. $n$-partition with breakpoints $0 = b_0 < b_1 < ... < b_n = 1$. We define $\gamma^{\mathcal{I}}_{\mathcal{J}}: [0,1] \to [0,1]$ to be the piecewise linear function determined by letting $\gamma^{\mathcal{I}}_{\mathcal{J}}(a_i)=b_i$ for all $i \in \{0,1,...,n\}$ and letting $\gamma^{\mathcal{I}}_{\mathcal{J}}$ be linear on $[a_i,a_{i+1}]$ for all $i$.
	
	Note that $\gamma^\mathcal{I}_\mathcal{J}$ is an element in $F$. We say $(\mathcal{I}, \mathcal{J})$ is a pair of s.d. partitions representing $\gamma^\mathcal{I}_\mathcal{J}$. It is well known that every element in $F$ can be represented by some pair of s.d. partitions.
\end{definition}

\begin{definition}
    Given a s.d. partition $\mathcal{I} = \{0=a_0 < a_1 < \cdots < a_n = 1\}$, denote $\mathcal{E}(\mathcal{I}) = \{0=a_0<\frac{a_0+a_1}{2}<a_1 < \cdots < a_{n-1} < \frac{a_{n-1}+a_n}{2}\}$, that is, the union of endpoints and midpoints of $\mathcal{I}$ except $1$.
\end{definition}

Next we introduce Jones' representation of Thompson's group very briefly. For a detailed discussion, see \cite{MR3589908} \cite{MR3986040}. Roughly speaking, Jones' theory takes a planar algebra \cite{MR4374438} $C = (C_n)$ and a substance $R \in C_2$ as inputs. Given a s.d. $n$-partition $\mathcal{I}$, let $H_\mathcal{I} = C_n$, with $2n$ boundary points in $\mathcal{E}(\mathcal{I})$. With the order $\leq$ and the substance $R$, Jones defines a direct limit $\mathcal{V} = \lim_{\mathcal{I} \in \mathfrak{T}} H_\mathcal{I}$ and a unitary representation of $F$ on $\mathcal{V}$.

To see a concrete example, let $C = (C_n)$ be the planar algebra of Conway tangles, where $C_n$ is the free $\mathbb{Z}$-module generated by Conway tangles with $2n$ boundary points. For Conway tangles, we identify two tangles if they differ by a family of distant unlinked unknots. For example, $C_0$ is spanned by links up to distant unlinked unknots, $C_1$ is spanned by tangles with two boundary points, up to distant unlinked unknots. We denote the class of the trivial tangle in $C_1$ as $\xi$. Moreover, $C_n$ has a $C_0$-valued inner product structure $\langle x, y \rangle$, given by connecting $2n$ boundary points of $x$ to $2n$ boundary points of $y$ one by one (Figure \ref{fig: inner product example}).

\begin{figure}
	\centering
	\includegraphics[width = 0.4\textwidth]{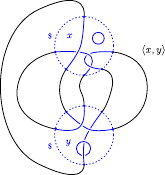}
	\caption{$x, y \in C_3$ are tangles with $6$ boundary points. Their inner product $\langle x, y \rangle \in C_0$ is obtained by connecting boundary points of $x$ to boundary points of $y$ one by one.}
	\label{fig: inner product example}
\end{figure}


Furthermore, let $R \in C_2$ be a crossing. Then take the direct limit over the ordered set $(\mathfrak{T}, \leq)$, with linear map from $H_{\mathcal{K}}$ to $H_{\mathcal{I}}$ induced by concatenating the tangle $T^{\mathcal{K}}_{\mathcal{I}}$ (Figure \ref{fig: forest to unoriented tangle}) along $\mathcal{E}(\mathcal{K})$. The resulting space $\mathcal{V}=\lim_{\mathcal{I} \in \mathfrak{T}} H_{\mathcal{I}}$ has an inner product induced by the inner product on $C_n$.

\begin{figure}
    \centering
    \includegraphics[width = \textwidth]{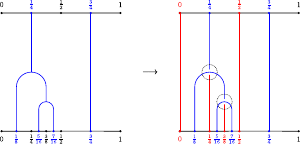}
	\put(-450, 15){$\mathcal{I}$}
	\put(-450, 185){$\mathcal{K}$}
	\put(-450, 120){$\color{blue} \tau^{\mathcal{K}}_{\mathcal{I}}$}
	\put(-195, 15){$\mathcal{I}$}
	\put(-195, 185){$\mathcal{K}$}
	\put(0, 120){$T^{\mathcal{K}}_{\mathcal{I}}$}
	\put(-120, 115){$R$}
	\put(-100, 75){$R$}
    \caption{$\tau^\mathcal{K}_\mathcal{I}$ is an embedded forest representing the refinement from $\mathcal{K}$ to $\mathcal{I}$. Its roots are midpoints of the subintervals of $\mathcal{K}$, and its leaves are midpoints of the subintervals of $\mathcal{I}$. Trivalent vertices lie vertically above the added breakpoints, representing splits of subintervals. Then replace each trivalent vertex of the forest $\tau^{\mathcal{K}}_{\mathcal{I}}$ by a crossing $R\in C_2$, with a vertical straight line joining the bottom of the disc containing $R$ to the breakpoint of $\mathcal{I}$ below the replaced vertex, and vertical lines connecting all the breakpoints of $\mathcal{K}$ to those breakpoints of $\mathcal{I}$ in common with $\mathcal{K}$ except $1$. All the extra strands are marked in red.}
    \label{fig: forest to unoriented tangle}
\end{figure}

Jones constructed a representation $\pi_L$ of $F$ on $\mathcal{V}$ as follows.

\begin{definition}\label{def: pi_L(g)}
    Given an element $g \in F$ and a vector $v\in \mathcal{V}$, we can always find a pair of s.d. partitions $(\mathcal{I}, \mathcal{J})$ representing $g$ such that a representative of $v$ lies in $H_{\mathcal{I}}$. Now define the action of $\pi_L(g)$ on $v$ to be induced by concatenating with the trivial tangle $g^{\mathcal{I}}_{\mathcal{J}}$ (Figure \ref{fig: shift example}) along $\mathcal{E}(\mathcal{I})$.
\end{definition}

As a result, a link class (up to distant unlinked unknots) $\langle \pi_L(g)(\xi), \xi \rangle \in C_0$ arises as a coefficient of $\pi_L(g)(\xi)$, denoted as $L(g)$. See Figure \ref{fig: construction of link with shift example} for an example.

Jones showed that all links can arise this way.

\begin{figure}
    \begin{subfigure}{0.4\textwidth}
        \centering
    	\includegraphics[width =\textwidth]{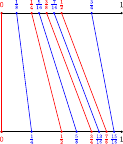}
    	\put(-190, 15){$\mathcal{J}$}
        \put(-190, 180){$\mathcal{I}$}
    	\put(-10, 120){$g^{\mathcal{I}}_{\mathcal{J}}$}
    	\caption{}
    	\label{fig: shift example}
     \end{subfigure}
     \hfill
     \begin{subfigure}{0.45\textwidth}
        \centering
        \includegraphics[width =\textwidth]{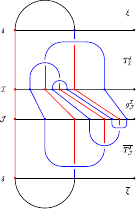}
        \caption{}
        \label{fig: construction of link with shift example}
     \end{subfigure}
     \caption{(A) $g^\mathcal{I}_\mathcal{J}$ is the trivial tangle connecting the points in $\mathcal{E}(\mathcal{I})$ to the points in $\mathcal{E}(\mathcal{J})$ one by one. (B) The canonical element $\xi \in H_{\delta} = C_1$ has a representative $\xi \circ T^\delta_\mathcal{I} \in H_\mathcal{I}$. Then $\pi_L(g)(\xi)$ has a representative $\xi \circ T^\delta_\mathcal{I} \circ g^\mathcal{I}_\mathcal{J} \in H_\mathcal{J}$. After taking the inner product with $\xi$'s another representative $\xi \circ T^\delta_\mathcal{J} \in H_\mathcal{J}$, we obtain the resulting link class $L(g) = \langle \pi_L(g)(\xi), \xi \rangle \in C_0$.}
\end{figure}

\begin{proposition} \textup{\cite[Theorem 5.3.1]{MR3589908}} \label{prop: unoriented Thompson Alexander theorem ver 1}
	For any $[L] \in C_0$, there exists $g\in F$ such that $L(g) = [L]$.
\end{proposition}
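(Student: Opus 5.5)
The plan is to follow Jones' original strategy. We translate the problem into one about planar graphs via the Tait (checkerboard) graph, and then realize any signed planar graph by a pair of binary forests, up to moves that do not change the link class in $C_0$.

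\textbf{Step 1: the Tait graph of $L(g)$.} Fix $g=\gamma^{\mathcal{I}}_{\mathcal{J}}$ with $\mathcal{I},\mathcal{J}$ s.d. $n$-partitions. The diagram of $L(g)$ in Figure \ref{fig: construction of link with shift example} is assembled from three pieces:
\begin{itemize}
\item the top tangle $\xi\circ T^\delta_{\mathcal{I}}$, with one crossing $R$ per trivalent vertex of $\tau^\delta_{\mathcal{I}}$;
\item the trivial tangle $g^{\mathcal{I}}_{\mathcal{J}}$;
\item the reflected bottom tangle $\xi\circ T^\delta_{\mathcal{J}}$, whose crossings are mirror images of $R$.
\end{itemize}
Shade the diagram in checkerboard fashion so that the shaded regions are those containing the breakpoints $a_i$ and $b_i$, glued along the strands of $g^{\mathcal{I}}_{\mathcal{J}}$. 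The resulting Tait graph $\Gamma(g)$ then has:
\begin{itemize}
\item vertices the $n$ matched breakpoints $0,\dots,n-1$ (the region to the right of the last breakpoint is unshaded);
\item one edge for each crossing.
\end{itemize}
Concretely, $\Gamma(g)$ is the union of two planar graphs on these vertices. One lies in the upper half plane and comes from $\tau^\delta_{\mathcal{I}}$; each split of a subinterval joins the left endpoint vertex to the new midpoint vertex. The other lies in the lower half plane and comes from $\tau^\delta_{\mathcal{J}}$. All upper edges carry one sign and all lower edges the opposite sign. Each half is a tree on the vertices, and any plane tree whose vertices lie on a line and which is of the form ``each vertex $v>0$ connected to a unique vertex to its left, with edges nested'' arises from some binary forest. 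This dictionary between pairs of trees and signed planar graphs is the first step. Its verification is routine once the picture is drawn carefully.

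\textbf{Step 2: reduce to a signed planar graph.} Given a link $L$, choose a diagram and its signed Tait graph $\Gamma$. By planar isotopy of the diagram, together with Reidemeister moves, we may arrange three properties:
\begin{itemize}
\item the vertices of $\Gamma$ lie on a horizontal line;
\item $\Gamma$ is a union of an upper plane graph with all positive edges and a lower plane graph with all negative edges;
\item each half is a tree of the above ``leftward'' type.
\end{itemize}
Three kinds of modification are available:
\begin{itemize}
\item Reidemeister II, which replaces an edge by a pair of edges of opposite signs in series or parallel, lets us fix the signs.
\item Reidemeister I, which adds a leaf or a loop with either sign, lets us add vertices or edges as needed.
\item Distant unknots, which are invisible in $C_0$, correspond to isolated vertices.
\end{itemize}

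First, choose a Hamiltonian-type ordering. Subdivide edges using Reidemeister II and I so that there is a simple arc through all vertices meeting each edge at most at its endpoints; a line-embedding of every planar graph exists after subdivision. Straighten this arc into the horizontal line. Each edge then lies in the upper or lower half plane; an edge crossing the line is subdivided at the crossing point. Next, use sign-changing Reidemeister II moves, each adding a new vertex, to make upper edges positive and lower edges negative. Finally, ensure each half is a tree connecting every vertex to something on its left. Missing connections are added by Reidemeister I. Cycles are broken by Reidemeister II subdivisions that move one edge's segment to the other side.

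\textbf{Main obstacle.} The main obstacle is this last normalization, namely making both halves simultaneously forests of exactly the shape coming from binary trees with the correct leftmost-vertex convention, while keeping the diagram moves link-preserving. I would attack it by induction on the number of edges. Pick the leftmost vertex with a ``bad'' configuration and resolve it locally by one R-I or R-II move, checking that badness strictly decreases. Once $\Gamma$ has the required form, Step 1 reads off $\tau_{\mathcal{I}}$ and $\tau_{\mathcal{J}}$, hence $g=\gamma^{\mathcal{I}}_{\mathcal{J}}$ with $L(g)=[L]$ in $C_0$.
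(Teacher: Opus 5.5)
The paper gives no proof of this statement; it only cites Jones' Theorem 5.3.1. Your framework is the right one. The Tait graph of $L(g)$ is an upper tree of one sign and a lower tree of the opposite sign on $n$ vertices in a row. A half-plane graph comes from a binary tree exactly when every vertex except the leftmost has one neighbour to its left. Step 1 is essentially fine.

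The gap is Step 2, which is the whole content of the theorem, and the moves you rely on are misdescribed.

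\textbf{What R-II actually does.} In Tait-graph terms an R-II move never replaces an existing edge and never changes the sign of one. It either inserts two opposite-signed parallel edges bounding an empty bigon, or splits a vertex into two joined by a two-edge path of opposite signs through a new bivalent vertex. Conversely:
\begin{itemize}
\item replacing an edge by an opposite-signed pair in series contracts it;
\item replacing it by an opposite-signed pair in parallel deletes it;
\item these are the two smoothings of the crossing;
\item subdividing an edge at an axis point into two edges of any signs gives either such a smoothing or a two-crossing twist.
\end{itemize}
So ``sign-changing Reidemeister II moves'' and ``subdivide at the crossing point'' change the link.

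\textbf{What R-I actually does.} R-I only creates a loop or a new univalent vertex. It cannot join two existing vertices, and a new leaf has no neighbour in the other half-plane. So ``missing connections are added by Reidemeister I'' does not work.

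\textbf{First missing ingredient: a link-preserving subdivision gadget.} For example, replace an edge of sign $s$ by a path through new bivalent vertices whose signs sum to $s$; such a twist collapses to a single crossing by R-II moves. Place the new vertices where the edge meets or touches the axis, and give each piece the sign dictated by its half-plane. Together with R-II parallel pairs between axis-neighbours (one edge above, one below), this yields the form ``vertices on the axis, upper edges positive, lower edges negative.''

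\textbf{Second, more serious gap: the final normalization.} You need every non-leftmost vertex to have exactly one left neighbour in \emph{both} halves at once. You only assert this, with no complexity measure and no specified local moves, and the step is not local. A surplus left edge nested inside another left edge at the same vertex cannot be rerouted without crossing that vertex's other edges. Adding a connection in one half (say by a parallel pair) creates a surplus left edge in the other half. Until you specify the moves and prove they terminate in the required form, the proposal does not prove the statement.
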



Next we consider Temperley-Lieb planar algebra $\mathcal{TL} = (\mathcal{TL}_n)$, where $\mathcal{TL}_n$ is the free $\mathbb{Z}[A, A^{-1}]$-module  generated by all planar tangles
with $2n$ boundary points and with the relation $\bigcirc = -A^2 - A^{-2}$.

\begin{definition}\label{def: Pi_J}
    Jones representation $\pi_J$ is defined similarly to $\pi_L$ in Definition \ref{def: pi_L(g)}, but we consider Temperley-Lieb planar algebra and the following instance $Q \in \mathcal{TL}_2$, which is the Kauffman bracket of $R$.
    \begin{figure}[H]
        \centering
        \includegraphics[width=0.5\linewidth]{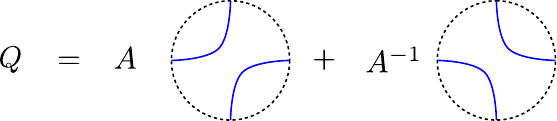}
    \end{figure}
\end{definition}

Furthermore, we multiply the inner product by a normalization factor $\frac{1}{(-A^2-A^{-2})^n}$, so that the normalized inner product takes values in $\mathbb{Q}(A)$, the field of rational functions in $A$.

For $g \in F$, we first take a pair of s.d. $n$-partitions $(\mathcal{I}, \mathcal{J})$ representing it. We denote the link $\xi \circ T^\delta_\mathcal{I} \circ g^\mathcal{I}_\mathcal{J} \circ \overline{T^\delta_\mathcal{J}} \circ \overline{\xi}$ in Figure \ref{fig: construction of link with shift example} as $L(\mathcal{I},\mathcal{J})$, which is a representative of $L(g)$.
Then we have
\begin{equation}\label{equ: normalized Kauffman for Thompson}
    \langle \pi_J(g)(\xi), \xi \rangle = \frac{1}{(-A^2-A^{-2})^n} \langle L(\mathcal{I},\mathcal{J}) \rangle(A),
\end{equation}
the normalized Kauffman bracket of $L(g)$.

Jones also defined a subgroup $\vec{F}$ of $F$ such that for $g \in \vec{F}$, the link $L(g)$ has a canonical orientation. Furthermore, $L(g)$ has writhe $0$, so for $g \in \vec{F}$ we have

\begin{equation}\label{equ: normalized Jones' polynomial for Thompson}
    \langle \pi_J(g)(\xi), \xi \rangle = \frac{1}{(-A^2-A^{-2})^{n-1}}V_{L(\mathcal{I}, \mathcal{J})}(A)
\end{equation}
the normalized Jones polynomial of $L(g)$.

Here we omit the definition of the canonical orientation on $L(g)$. Instead, we will give an alternative construction of the oriented link $L(g)$ in Section \ref{sec:half-grid} using half grid diagrams. 

\section{Equivalent definition of $B_{\vec{F}}$ using concentration structures on categories}

\subsection{Concentration structures and $G$-equivariant direct limits} \label{sec:concentration-structures-on-categories}

In this section we will record some definitions and results from \cite{MR5118516}.

Let $\mathcal{C}$ be a small category. The sets of objects and morphisms of $\mathcal{C}$ are denoted as $\textup{Ob}_{\mathcal{C}}$ and $\textup{Mor}_{\mathcal{C}}$.

\begin{definition} \label{def: concentration structure}
     A \textbf{concentration structure} on $\mathcal{C}$ is an equivalence relation $\sim$ on $\textup{Mor}_{\mathcal{C}}$, which satisfies the following axioms:
    
    \begin{enumerate}
        \item $id_A \sim id_B$ for any objects $A, B \in \textup{Ob}_{\mathcal{C}}$. (Identity axiom)
        
        \item If $f \sim f', g \sim g'$ and both $f\circ g, f' \circ g'$ exist, then $f \circ g \sim f' \circ g'$. (Composition axiom)

        \item For any morphisms $f, g \in \textup{Mor}_{\mathcal{C}}$, there exist some $f' \sim f$ and $ g' \sim g$ such that $f' \circ g'$ exists. (2-Existence axiom)

        \item  For any morphisms $f\sim f', g\sim g', h\sim h', m\sim f\circ g, n \sim g'\circ h'$ we have $f' \circ n \sim m \circ h$ whenever all the compositions are  well defined. (Associativity axiom)
    \end{enumerate}
   
   If $\sim$ is a concentration structure on $\mathcal{C}$, we will say $(\mathcal{C},\sim)$ is a \textbf{category with concentration}.
\end{definition}

\begin{definition} \label{def: concentration preserving functor}
    We say a map $F: (\mathcal{C},\sim_{\mathcal{C}}) \rightarrow(\mathcal{D},\sim_{\mathcal{D}}) $ is a \textbf{concentration preserving functor} if the following holds 
    
    \begin{itemize}
        \item $F$ is a functor between $\mathcal{C}$ and $\mathcal{D}$ in the usual sense.
        \item For any $f,f'\in \textup{Mor}_{\mathcal{C}}$, if $f\sim_{\mathcal{C}}f'$ then $F(f) \sim_{\mathcal{D}}F(f')$. 
    \end{itemize}
\end{definition}

\begin{definition}
    Given $(\mathcal{C},\sim)$, a category with concentration, define the \textbf{concentration monoid} of $(\mathcal{C}, \sim)$ to be $M_{(\mathcal{C}, \sim)} = \textup{Mor}_{\mathcal{C}} / \sim$, with multiplication 
    $$
    [f][g] = [f' \circ g']
    $$
    where $f' \sim f, g' \sim g$ and $f' \circ g'$ exists. The third axiom of concentration structure guarantees that we can always find such $f'$ and $g'$.
\end{definition}

An important property of concentration preserving functors is that they induce monoid homomorphisms on concentration monoids.
\begin{proposition} \cite[Proposition 2.2]{MR5118516}\label{prop: concentration preserving functor induces monoid homomorphism}
    Any concentration preserving functor $F: (\mathcal{C},\sim_{\mathcal{C}}) \rightarrow(\mathcal{D},\sim_{\mathcal{D}})$ induces a monoid homomorphism $\phi_F: M_{(\mathcal{C},\sim_{\mathcal{C}})} \to M_{(\mathcal{D},\sim_{\mathcal{D}})}$ where $\phi_F([f])=[F(f)]$.
\end{proposition}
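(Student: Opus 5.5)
The plan is to define $\phi_F$ by the stated formula $\phi_F([f])=[F(f)]$ and then check two things: that it is well defined on equivalence classes, and that it is multiplicative and unital. Well-definedness is immediate from the second condition in Definition~\ref{def: concentration preserving functor}. If $f\sim_{\mathcal{C}} f'$, then $F(f)\sim_{\mathcal{D}} F(f')$, so the class $[F(f)]\in M_{(\mathcal{D},\sim_{\mathcal{D}})}$ does not depend on the representative $f$.

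For multiplicativity, take classes $[f],[g]\in M_{(\mathcal{C},\sim_{\mathcal{C}})}$.
\begin{enumerate}
\item By the 2-Existence axiom, choose $f'\sim f$ and $g'\sim g$ such that $f'\circ g'$ exists. Then $[f][g]=[f'\circ g']$.
\item Since $F$ is a functor, $F(f'\circ g')=F(f')\circ F(g')$, and this composition exists in $\mathcal{D}$.
\item By concentration preservation, $F(f')\sim F(f)$ and $F(g')\sim F(g)$. Hence $F(f'),F(g')$ are admissible representatives for computing the product $[F(f)][F(g)]$ in $M_{(\mathcal{D},\sim_{\mathcal{D}})}$.
\item Therefore $[F(f)][F(g)]=[F(f')\circ F(g')]=[F(f'\circ g')]=\phi_F([f][g])$.
\end{enumerate}

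The one point needing care is that the product in $M_{(\mathcal{D},\sim_{\mathcal{D}})}$ is independent of the representatives chosen. I would check this directly from the Composition axiom. If $a\sim a'$, $b\sim b'$ and both $a\circ b$ and $a'\circ b'$ exist, then $a\circ b\sim a'\circ b'$. This is exactly what is needed to know that the multiplication on a concentration monoid is well defined, which the paper's definition takes for granted.

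The identity element of $M_{(\mathcal{C},\sim_{\mathcal{C}})}$ is the class $[id_A]$. By the Identity axiom this class is the same for every object $A$. It is a two-sided unit because, for any $f\colon A\to B$, we may use the representatives $id_B$ and $f$ to get $[id_B][f]=[id_B\circ f]=[f]$, and similarly on the right. A functor sends $id_A$ to $id_{F(A)}$, so $\phi_F([id_A])=[id_{F(A)}]$, which is the unit of $M_{(\mathcal{D},\sim_{\mathcal{D}})}$.

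Associativity of the monoids, which relies on the Associativity axiom, is part of the definition of a concentration monoid and is not needed to show that $\phi_F$ is a homomorphism. I expect no real obstacle; the only subtle step is the representative-independence of the product noted above.
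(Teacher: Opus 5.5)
Your proof is correct. Well-definedness comes from concentration preservation, multiplicativity from the 2-Existence axiom plus functoriality (with the Composition axiom guaranteeing that the product does not depend on the representatives chosen), and unit preservation from the Identity axiom together with $F(id_A)=id_{F(A)}$; the paper gives no proof of its own here, only the citation \cite[Proposition 2.2]{MR5118516}, and your direct verification is the standard argument for this fact.
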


Recall that a \emph{group action of $G$ on a category $\mathcal{C}$} is a group homomorphism $\rho: G \rightarrow \textup{Aut}(\mathcal{C})$, where $\textup{Aut}(\mathcal{C})$ denotes the group of strongly invertible functors from $\mathcal{C}$ to itself. When the action has no ambiguity, we simply write $ \rho(f)(\_)$ as $f(\_)$ for the action of $f\in G$ on an object or a morphism of $\mathcal{C}$. A functor $F: \mathcal{C} \to \mathcal{D}$ is said to be \emph{$G$-equivariant} if $F(f(C)) = f(F(C))$ for any object $C$ in $\mathcal{C}$, and $F(f(\alpha)) = f(F(\alpha))$ for any morphism $\alpha$ in $\mathcal{C}$.

Let $(S, \leq)$ be a directed set, and let $\mathcal{S}$ be its associated direct category, defined by $\textup{Ob}(\mathcal{S}) = S$ with a single morphism $\iota_A^B$ from $A$ to $B$ whenever $A \leq B$. Suppose a group $G$ acts on $\mathcal{S}$. Note that the morphisms in $\mathcal{S}$ are determined by the partial order on the objects, so the action must satisfy $f(\iota_A^B)=\iota_{f(A)}^{f(B)}$ for any $f\in G$ and any $A \leq B$.


\begin{definition} \label{def: category S_G}
    Given a group action of $G$ on $\mathcal{S}$, and a $G$-equivariant functor $F: \mathcal{S} \to \mathscr{M}on$, the category of monoids (where $G$ acts trivially on $\mathscr{M}on$), define the category $\mathcal{S}_{G}$ as follows:
    \begin{itemize}
        \item $\textup{Ob}(\mathcal{S}_G) = \textup{Ob}(\mathcal{S})$.
        \item $\textup{Mor}(B, A) = \{(A, \alpha, f): \alpha \in F(A), f \in G, f(B) = A\}$.
        \item Composition is given by $(A, \alpha, f) \circ (B, \beta, g)  = (A, \alpha\beta, fg)$. Here $\alpha\beta$ is well defined because $F$ acts $G$-equivariantly on the objects, i.e. $F(A) = F(f(B)) = F(B)$. 
    \end{itemize}
    The identity morphism $id_{A}$ is given by $(A, e_{F(A)}, \epsilon)$, where $e_{F(A)}$ is the identity in $F(A)$ and $\epsilon$ is the identity in $G$. 
\end{definition}

\begin{remark}
    Each morphism $(A, \alpha, f)$ in $\mathcal{S}_G$ implicitly determines its source and target: the target is $A$, while the source is $f^{-1}(A)$. The element $\alpha$ belongs to $F(A)$, the monoid associated with the target object $A$.
\end{remark}

\begin{definition} \label{def: concentration on S_G}
    We define the concentration structure on $\mathcal{S}_G$ such that $(A, \alpha, f) \sim (B, \beta, g)$ if and only if
    \begin{enumerate}
        \item $(F(\iota_A^C))(\alpha) = (F(\iota_B^C))(\beta)$ for some $C \in \textup{Ob}(\mathcal{S})$.
        \item $f = g$.
    \end{enumerate}
\end{definition}

It is shown in \cite{MR5118516} that this is indeed a concentration structure. The $G$-equivariant direct limit is the concentration monoid of $\mathcal{S}_G$ with this concentration structure. More precisely, we have the following definition. 

\begin{definition} \label{def: G-equivariant direct limit}
    Given a direct category $\mathcal{S}$, a $G$-action on $\mathcal{S}$, and a $G$-equivariant functor $F: \mathcal{S} \to \mathscr{M}on$, we define the \textbf{$G$-equivariant direct limit} $\displaystyle \lim_{\longrightarrow}{}^G F$ to be $ M_{(\mathcal{S}_G,\sim)}$, the concentration monoid of $(\mathcal{S}_G, \sim)$.
\end{definition}



\subsection{Maps between $G$-equivariant direct limits} \label{sec:g-equivariant-direct-limits}

In this section, we study maps between $G$-equivariant direct limits, and later we will use them to construct a representation of the dyadic braid group. The definitions and results in this section are not contained in \cite{MR5118516}.

We first introduce $G$-equivariant natural transformations between two $G$-equivariant functors.

\begin{definition}
    Suppose that $\mathcal{C}$ and $\mathcal{D}$ are categories with $G$-actions on them. Let $F_1, F_2$ be two $G$-equivariant functors from $\mathcal{C}$ to $\mathcal{D}$. We say a natural transformation $\tau$ from $F_1$ to $F_2$ is $G$-equivariant if $g(\tau_A) = \tau_{g(A)}$ for any $g \in G$ and any object $A$ in $\mathcal{C}$.
\end{definition}

\begin{proposition} \label{prop:S-functor-induces-SG-functor}
    Let $\mathcal{S}$ and $\mathcal{S}'$ be two direct categories with $G$-actions on them, and let $F, \ F'$ be two $G$-equivariant functors from $\mathcal{S}, \ \mathcal{S}'$ to $\mathcal{M}on$ respectively (recall that we always require that $G$ acts trivially on $\mathcal{M}on$). Then given any $G$-equivariant functor $\rho:\mathcal{S} \rightarrow\mathcal{S}'$, and a $G$-equivariant natural transformation $\tau$ from $F$ to $F' \circ \rho$, $\rho$ and $\tau$ induce a concentration preserving functor
    $$
    \rho_\tau: (\mathcal{S}_G, \sim) \to (\mathcal{S}'_G, \sim')
    $$
    sending object $A$ to $\rho(A)$, morphism $(A, \alpha, f)$ to $(\rho(A), \tau_A(\alpha), f)$.
\end{proposition}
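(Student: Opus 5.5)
The plan is a direct verification in three steps: (i) $\rho_\tau$ is well defined on morphisms, (ii) it is a functor, and (iii) it preserves the concentration relation.

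\emph{Well-definedness.} Take a morphism $(A,\alpha,f)$ from $B=f^{-1}(A)$ to $A$ in $\mathcal{S}_G$, so $\alpha\in F(A)$ and $f(B)=A$. Its image $(\rho(A),\tau_A(\alpha),f)$ must be a morphism of $\mathcal{S}'_G$ from $\rho(B)$ to $\rho(A)$.
\begin{itemize}
\item We have $\tau_A(\alpha)\in F'(\rho(A))$, since $\tau_A: F(A)\to F'(\rho(A))$ is a monoid map.
\item By $G$-equivariance of $\rho$, $f(\rho(B))=\rho(f(B))=\rho(A)$, so the source of the image is $\rho(B)$, as required.
\end{itemize}

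\emph{Functoriality.} For the identity, $\rho_\tau(A,e_{F(A)},\epsilon)=(\rho(A),e_{F'(\rho(A))},\epsilon)$, because $\tau_A$ is a monoid homomorphism. For composition, take $(A,\alpha,f)\circ(B,\beta,g)=(A,\alpha\beta,fg)$ with $f(B)=A$. The point to watch is that in $\mathcal{S}_G$ the product $\alpha\beta$ is formed in $F(A)=F(B)$. Here $\beta$ is viewed in $F(A)$ through $G$-equivariance of $F$, since $F(f(B))=F(B)$ and $G$ acts trivially on $\mathcal{M}on$. So we need $\tau_A(\beta)=\tau_B(\beta)$ as elements of $F'(\rho(A))=F'(\rho(B))$. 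This follows from $G$-equivariance of $\tau$: $\tau_A=\tau_{f(B)}=f(\tau_B)=\tau_B$, since $f$ acts trivially on morphisms of $\mathcal{M}on$. Hence
\[
\tau_A(\alpha\beta)=\tau_A(\alpha)\tau_A(\beta)=\tau_A(\alpha)\tau_B(\beta),
\]
and the right side is exactly the monoid component of $\rho_\tau(A,\alpha,f)\circ\rho_\tau(B,\beta,g)$. This identification is the only delicate point, and I expect it to be the main obstacle.

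\emph{Concentration preservation.} Suppose $(A,\alpha,f)\sim(B,\beta,g)$. Then $f=g$, and there is $C$ with $F(\iota_A^C)(\alpha)=F(\iota_B^C)(\beta)$. Since $\mathcal{S}$ is directed, we may assume $A,B\le C$.
\begin{itemize}
\item Apply $\tau_C$ to both sides. Naturality of $\tau$ gives $\tau_C\circ F(\iota_A^C)=F'(\rho(\iota_A^C))\circ\tau_A$, and likewise for $B$.
\item Since $\rho$ is a functor between direct categories, $\rho(\iota_A^C)=\iota_{\rho(A)}^{\rho(C)}$, and similarly for $B$.
\item Therefore $F'(\iota_{\rho(A)}^{\rho(C)})(\tau_A(\alpha))=F'(\iota_{\rho(B)}^{\rho(C)})(\tau_B(\beta))$, with the same group element $f$ on both sides.
\end{itemize}
So the images are related by condition (1) of Definition \ref{def: concentration on S_G} with witness $\rho(C)$, together with $f=g$. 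Hence $\rho_\tau(A,\alpha,f)\sim'\rho_\tau(B,\beta,g)$, and $\rho_\tau$ is a concentration preserving functor.
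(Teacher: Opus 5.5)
Your proposal is correct and follows essentially the same route as the paper. Both check that the source is preserved via $f(\rho(B))=\rho(f(B))=\rho(A)$, both get functoriality from $\tau_A=\tau_{f(B)}=f(\tau_B)=\tau_B$, and both get concentration preservation by applying $\tau_C$ and using the two naturality squares, with witness $\rho(C)$.
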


\begin{proof}
    We first check that $\rho_\tau$ is a well-defined functor. Suppose that $(A, \alpha, f)$ is a morphism from $B$ to $A$, then $\rho_\tau(A, \alpha, f) = (\rho(A), \tau_A(\alpha), f)$ is indeed a morphism from $\rho(B)$ to $\rho(A)$, since $\rho$ is $G$-equivariant we have $f(\rho(B)) = \rho(f(B)) = \rho(A)$. It is easy to see that $\rho_\tau$ preserves identity morphisms, since $\tau_A$ is a monoid homomorphism for any $A$. Now given another morphism 
    $(B, \beta, g)$ from $C$ to $B$, we have
    \begin{align*}
        \rho_\tau(A, \alpha, f) \circ \rho_\tau(B, \beta, g) 
        &= (\rho(A), \tau_A(\alpha), f) \circ (\rho(B), \tau_B(\beta), g) \\
        &= (\rho(A), \tau_A(\alpha)\tau_B(\beta), fg) \\
        &= (\rho(A), \tau_A(\alpha)\tau_A(\beta), fg) \\
        &= (\rho(A), \tau_A(\alpha\beta), fg) \\
        &= \rho_\tau(A, \alpha\beta, fg) \\
        &= \rho_\tau((A, \alpha, f) \circ (B, \beta, g))
    \end{align*}
 where from line $2$ to line $3$ we use the fact that $\tau$ is $G$-equivariant, hence $\tau_A(\beta)=\tau_{f(B)}(\beta)=f(\tau_B(\beta))=\tau_B(\beta)$.

    Then we check that $\rho_\tau$ preserves the concentration structures. Suppose that $(A, \alpha, f) \sim (B, \beta, g)$, then $f = g$ and there exists some $C \geq A, B$ such that $(F(\iota_A^C))(\alpha) = (F(\iota_B^C))(\beta)$, and we want to show $(\rho(A),
    \tau_A(\alpha),f) \sim' (\rho(B),
    \tau_B(\beta),f)$. By definition it is enough to show $(F'(\iota_{\rho(A)}^{\rho(C)}))(\tau_A(\alpha))=(F'(\iota_{\rho(B)}^{\rho(C)}))(\tau_B(\beta))$ which follows from the following two commutative diagrams on the naturality.

    \[\begin{tikzcd}
    	{F(C)} & {F'(\rho(C))} &&& {F(C)} & {F'(\rho(C))} \\
    	{F(A)} & {F'(\rho(A))} &&& {F(B)} & {F'(\rho(B))}
    	\arrow["{\tau_C}", from=1-1, to=1-2]
    	\arrow["{\tau_C}", from=1-5, to=1-6]
    	\arrow["{F(\iota_A^C)}", from=2-1, to=1-1]
    	\arrow["{\tau_A}"', from=2-1, to=2-2]
    	\arrow["{F'(\iota_{\rho(A)}^{\rho(C)})}"', from=2-2, to=1-2]
    	\arrow["{F(\iota_B^C)}", from=2-5, to=1-5]
    	\arrow["{\tau_B}"', from=2-5, to=2-6]
    	\arrow["{F'(\iota_{\rho(B)}^{\rho(C)})}"', from=2-6, to=1-6]
    \end{tikzcd}\]

    More precisely, \begin{align*}
(F'(\iota_{\rho(A)}^{\rho(C)}))(\tau_A(\alpha))= \tau_C(F(\iota_A^C)(\alpha))=\tau_C(F(\iota_B^C)(\beta)) =(F'(\iota_{\rho(B)}^{\rho(C)}))(\tau_B(\beta)).
    \end{align*}
\end{proof}

As a direct consequence of Proposition \ref{prop:S-functor-induces-SG-functor} and Proposition \ref{prop: concentration preserving functor induces monoid homomorphism} we have the following corollary.

\begin{corollary} \label{cor:S-functor-induces-equivariant-direct-limit-homomorphism}
    In the setting of Proposition \ref{prop:S-functor-induces-SG-functor}, $\rho$ and $\tau$ further induce a monoid homomorphism $\phi_{\rho_{\tau}}: \displaystyle \lim_{\longrightarrow}{}^G F \to \displaystyle \lim_{\longrightarrow}{}^G F'$.
\end{corollary}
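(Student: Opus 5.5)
The statement is immediate from Proposition \ref{prop: concentration preserving functor induces monoid homomorphism} applied to the functor built in Proposition \ref{prop:S-functor-induces-SG-functor}. Start from $\rho$ and $\tau$ as in the setting of Proposition \ref{prop:S-functor-induces-SG-functor}. That proposition gives a concentration preserving functor
$$
\rho_\tau: (\mathcal{S}_G, \sim) \to (\mathcal{S}'_G, \sim').
$$
Proposition \ref{prop: concentration preserving functor induces monoid homomorphism} then gives a monoid homomorphism
$$
\phi_{\rho_\tau}: M_{(\mathcal{S}_G,\sim)} \to M_{(\mathcal{S}'_G,\sim')}, \qquad \phi_{\rho_\tau}([(A,\alpha,f)]) = [(\rho(A),\tau_A(\alpha),f)].
$$
By Definition \ref{def: G-equivariant direct limit}, the source is exactly $\displaystyle \lim_{\longrightarrow}{}^G F$ and the target is exactly $\displaystyle \lim_{\longrightarrow}{}^G F'$, which proves the claim.

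Some care is needed because Proposition \ref{prop:S-functor-induces-SG-functor} only verifies two things: that $\rho_\tau$ is a functor, and that it respects $\sim$. It also needs to hold that $(\mathcal{S}_G,\sim)$ and $(\mathcal{S}'_G,\sim')$ really are categories with concentration, so that both concentration monoids are defined. This is recorded from \cite{MR5118516} just after Definition \ref{def: concentration on S_G}, and I will cite it there.

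If one wants a self-contained check that $\phi_{\rho_\tau}$ is multiplicative, proceed as follows. Take $[x][y]=[x'\circ y']$ with $x'\sim x$, $y'\sim y$ composable. Then
$$
\phi([x][y]) = [\rho_\tau(x')\circ\rho_\tau(y')].
$$
Concentration preservation gives $\rho_\tau(x')\sim'\rho_\tau(x)$ and $\rho_\tau(y')\sim'\rho_\tau(y)$, and the composite on the right exists because $\rho_\tau$ is a functor. Hence $[\rho_\tau(x')\circ\rho_\tau(y')] = \phi([x])\phi([y])$. The identity class also goes to the identity class, since $\rho_\tau$ preserves identities.

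There is no real obstacle here. The only subtle point is the well-definedness of multiplication in the concentration monoid, that is, its independence of the representatives $x', y'$. That is supplied by the composition axiom and is already built into the cited results.
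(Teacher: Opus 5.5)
Your proposal is correct and takes the same route as the paper. The paper also obtains the corollary directly, by applying Proposition \ref{prop: concentration preserving functor induces monoid homomorphism} to the concentration preserving functor $\rho_\tau$ from Proposition \ref{prop:S-functor-induces-SG-functor}, and then identifying the two concentration monoids with $\displaystyle \lim_{\longrightarrow}{}^G F$ and $\displaystyle \lim_{\longrightarrow}{}^G F'$.
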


\subsection{$B_{\vec{F}}$ as an $\vec{F}$-equivariant direct limit} \label{sec:BF-as-an-F-equivariant-direct-limit}

Let $S^{\frac{1}{2}}$ be the collection of finite subsets of dyadic rational numbers, ordered by inclusion. Then $(S^{\frac{1}{2}}, \subseteq)$ is a directed set. Denote $\mathcal{S}^{\frac{1}{2}}$ to be the associated direct category. We define the functor $Br:\mathcal{S}^{\frac{1}{2}} \rightarrow \mathcal{G}rp$ as follows:

\begin{itemize}
    \item For any finite set $P$, let $Br(P) = Br_{|P|}$, the braid group with $|P|$ strands.
    \item For any pair $P \subseteq Q$, let $Br(\iota_P^Q): Br_{|P|} \to Br_{|Q|}$ be the group homomorphism induced by $\it{adding \ backside \ trivial \ strands}$ on $Q \setminus P$, where we identify the endpoints of $Br_{|P|}$ and $Br_{|Q|}$ with $P$ and $Q$ respectively. See Figure \ref{fig: adding trivial strands to a thompson braid} for an illustration.
\end{itemize}

\begin{figure}
    \begin{subfigure}{0.4\textwidth}
        \centering
        \includegraphics{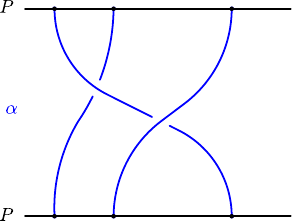}
        \caption{}
        \label{fig: thompson braid before adding trivial strands}
    \end{subfigure}
    \hfill
    \begin{subfigure}{0.4\textwidth}
        \centering
        \includegraphics{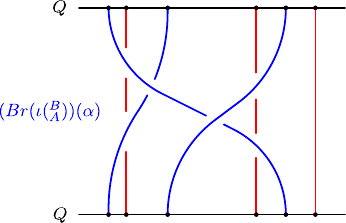}
        \caption{}
        \label{fig: thompson braid after adding trivial strands}
    \end{subfigure}
    \caption{(A) An element $\alpha \in Br_{|P|}$. (B) The group homomorphism $Br(\iota_P^Q)$ adds backside trivial strands (marked in red) to $\alpha$. The endpoints of those trivial strands are on $Q \setminus P$.}
    \label{fig: adding trivial strands to a thompson braid}
\end{figure}

It is easy to check the map defined above is actually a well-defined functor from $\mathcal{S}^{\frac{1}{2}}$ to $\mathcal{G}rp$.

Consider the $\vec{F}$-action $\rho: \vec{F} \to \textup{Aut}(S^{\frac{1}{2}})$ such that $\rho_f(P) = f(P)$ and $\rho_f(\iota_P^Q) = \iota_{f(P)}^{f(Q)}$. Again, we consider the trivial action on $\mathcal{G} rp$.

Since the action of $\vec{F}$ preserves the order of dyadic rational numbers, $Br$ is an $\vec{F}$-equivariant functor. We can take the $\vec{F}$-equivariant direct limit of $Br$.

\begin{proposition}
    The group $B_{\vec{F}}$ and the group $\displaystyle \lim_{\longrightarrow}{}^{\vec{F}} Br$ are isomorphic.
\end{proposition}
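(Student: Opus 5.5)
We will write $M = \displaystyle \lim_{\longrightarrow}{}^{\vec{F}} Br$ and build mutually inverse homomorphisms $\Phi: B_{\vec{F}} \to M$ and $\Psi: M \to B_{\vec{F}}$.

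\textbf{Step 0: $M$ is a group.} Since $\vec{F}$ acts only on $(0,1)$-points, we restrict to finite subsets of dyadic rationals in $(0,1)$; endpoints $0,1$ are fixed by $\vec{F}$ and can be added as backside trivial strands without changing anything. Each $Br(P)$ is a group, so $(A,\alpha,f)$ has inverse $(f^{-1}(A), \alpha^{-1}, f^{-1})$ up to $\sim$. Indeed, the morphism $(A,\alpha,f)$ goes from $f^{-1}(A)$ to $A$, and $(f^{-1}(A),\alpha^{-1},f^{-1})$ goes from $A$ to $f^{-1}(A)$. Their composites are $(A, e, \epsilon)$ and $(f^{-1}(A), e, \epsilon)$, both $\sim id$. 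Here we use that $\alpha^{-1}\in Br(A)=Br(f^{-1}(A))$, both being $Br_{|A|}$.

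\textbf{Step 1: define $\Phi$ on generators.}
\begin{itemize}
\item $\Phi(\sigma_{ab}) = [(\{a,b\}, \sigma_{12}, \epsilon)]$, the class of the single crossing in $Br_2$. Concretely, the BKL generator $\sigma_{ab}$ in $Br(Q)$ for any $Q\supseteq\{a,b\}$ is obtained from $\sigma_{12}\in Br(\{a,b\})$ by adding backside trivial strands. This matches the BKL convention that intermediate strands lie below.
\item $\Phi(g_i) = [(A, e, g_i)]$ for any $A$.
\end{itemize}

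Then check the relations.
\begin{itemize}
\item The braid relations hold because, for finitely many letters, everything is concentrated in $Br(Q)$ for a common finite $Q$, where the BKL presentation (cited) holds.
\item The $\vec{F}$ relations hold because the $G$-component multiplies in $\vec{F}$, which has the stated presentation.
\item For the braid $\vec{F}$ relation, $(A,e,g_i)\circ(B,\sigma,\epsilon)$ with $B=g_i^{-1}(A)$ equals $(A,\sigma,g_i)$. Also $(A,\sigma',\epsilon)\circ(A,e,g_i)=(A,\sigma',g_i)$, where $\sigma'$ is the same element of $Br_{|A|}$. Equality holds when $\sigma$ is read with endpoints in $B$ and $\sigma'$ with endpoints in $A=g_i(B)$. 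Since $g_i$ is order preserving, $\sigma_{ab}$ on $B$ corresponds to $\sigma_{g_i(a)g_i(b)}$ on $A$.
\end{itemize}

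\textbf{Step 2: define $\Psi$.} Send $(A,\alpha,f)$ to $w_A(\alpha)\cdot \hat f$. Here $w_A(\alpha)$ is $\alpha$ written as a BKL word with strand indices replaced by the points of $A$, and $\hat f$ is a word in the $g_i$ representing $f$. This is well defined for three reasons:
\begin{itemize}
\item It is independent of the word, since the BKL relations on the points of $A$ hold in $B_{\vec{F}}$.
\item It is compatible with $\sim$, since adding backside trivial strands maps BKL generators $\sigma_{jk}$ to $\sigma_{a_j a_k}$ under relabeling, so $w_A(\alpha)=w_C(Br(\iota_A^C)\alpha)$.
\item It respects composition, $(A,\alpha,f)\circ(B,\beta,g)\mapsto w_A(\alpha)w_A(\beta)\hat f\hat g$, because the braid $\vec{F}$ relations give $\hat f\, w_B(\beta)=w_{f(B)}(\beta)\hat f$. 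Extending from $g_i$ to general $f$ uses only that relation iterated, including for inverses $g_i^{-1}$ by conjugating.
\end{itemize}

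\textbf{Step 3: inverse check.} $\Psi\Phi$ fixes the generators. $\Phi\Psi$ is the identity since $[(A,\alpha,f)]=[(A,\alpha,\epsilon)][(A,e,f)]$ and $[(A,\alpha,\epsilon)]$ is a product of images of $\sigma_{ab}$.

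The main obstacle is the well-definedness of $\Psi$ with respect to the identification $Br(A)=Br(f^{-1}(A))$. This requires carefully verifying that relabeling along an order-preserving bijection is coherent with the relations $g_i\sigma_{ab}g_i^{-1}=\sigma_{g_i(a)g_i(b)}$ for arbitrary products of the $g_i^{\pm1}$.
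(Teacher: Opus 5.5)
Your argument is correct, but it takes a more hands-on route than the paper. The paper never manipulates morphisms of $\mathcal{S}^{\frac{1}{2}}_{\vec{F}}$ directly. It quotes \cite[Proposition 4.6]{MR5118516} to identify $\lim_{\longrightarrow}{}^{\vec{F}} Br$ with the semidirect product $\lim_{\longrightarrow} Br \rtimes_{\phi} \vec{F}$, where $\phi_f[P,\alpha]=[f(P),\alpha]$. It then asserts that the ordinary direct limit is presented by the $\sigma_{ab}$ with the Birman--Ko--Lee relations, and reads off the presentation of $B_{\vec{F}}$ from the standard presentation of a semidirect product.

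You instead write down $\Phi$ and $\Psi$ and check that they are mutually inverse. In doing so you reprove exactly the pieces the paper outsources:
\begin{itemize}
\item your factorization $[(A,\alpha,f)]=[(A,\alpha,\epsilon)][(A,e,f)]$ is the semidirect decomposition;
\item your identity $\hat f\, w_B(\beta)=w_{f(B)}(\beta)\,\hat f$ says that conjugation by $\vec{F}$ acts through $\phi$;
\item your check $w_A=w_C\circ Br(\iota_A^C)$ is the argument behind the presentation of $\lim_{\longrightarrow} Br$, which the paper only calls easy to see.
\end{itemize}
The paper's route is shorter and reuses a general structure theorem for equivariant direct limits. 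Yours is self-contained and makes explicit the identification $\sigma_{ab}\leftrightarrow[\{a,b\},\sigma,\epsilon]$, $g_i\leftrightarrow[A,e,g_i]$ that the paper relies on afterwards.

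The step you single out as the main obstacle is routine. The set of $f\in\vec{F}$ with $\hat f\sigma_{ab}\hat f^{-1}=\sigma_{f(a)f(b)}$ for all $a<b$ is a subgroup, since your conjugation trick handles inverses. It contains every $g_i$, so it is all of $\vec{F}$.

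Two small points should be made explicit.
\begin{itemize}
\item $\hat f$ is independent of the chosen word only because $\langle g_i \mid g_ng_l=g_lg_{n+2}\ (l<n)\rangle$ is a presentation of $\vec{F}$, via $\vec{F}\cong F_3$. The paper needs the same fact.
\item Your remark that $0,1$ can be added ``without changing anything'' is inaccurate. Allowing $0$ or $1$ in the point sets would create classes such as $[\{0,b\},\sigma,\epsilon]$, which are not in the image of $B_{\vec{F}}$. So restricting to subsets of $(0,1)$ is a necessary reading of $S^{\frac{1}{2}}$ (one the paper makes implicitly), not a harmless simplification.
\end{itemize}
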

\begin{proof}
    By \cite[Proposition 4.6]{MR5118516}, the $\vec{F}$-equivariant direct limit $\displaystyle \lim_{\longrightarrow}{}^{\vec{F}} Br$ is isomorphic to $\displaystyle \lim_{\longrightarrow} Br \rtimes_{\phi} \vec{F}$, where $\displaystyle \lim_{\longrightarrow} Br$ is the regular direct limit, and $\phi: \vec{F} \to \textup{Aut}(\displaystyle \lim_{\longrightarrow} Br)$ is given by $\phi_f[P, \alpha] = [f(P), \alpha]$.

    Let $\sigma$ be the canonical generator of $Br_2$. It is easy to see that the direct limit $\displaystyle \lim_{\longrightarrow} Br$ is generated by the braid generators $\sigma_{ab} = [\{a, b\}, \sigma]$ for any two dyadic rationals $a<b$, with braid relations
    \begin{align*}
      &\sigma_{ab}\sigma_{cd}=\sigma_{cd}\sigma_{ab} \ \text{ if $(a-c)(a-d)(b-c)(b-d)>0$,} \\ 
      &\sigma_{ab}\sigma_{bc}=\sigma_{ac}\sigma_{ab}=\sigma_{bc}\sigma_{ac} \ \text{  for all $a,b,c$ with $a<b<c$}.\\ 
    \end{align*}
    
    The oriented Thompson group $\vec{F}$ is generated by $g_i$ for $i \in \mathbb{Z}_{\geq 0}$ with relations
    $$
    g_n g_l = g_l g_{n+2} \ \text{ for all $l < n$}
    $$

    The semidirect product $\displaystyle \lim_{\longrightarrow} Br \rtimes_{\phi} \vec{F}$ is generated by the union of the generating sets of its two factors. Its defining relations consist of the relations of both factors together with the following additional relations:
    $$
    g_i\sigma_{ab} = (\phi_{g_i}(\sigma_{ab}))g_i \ \text{ for all $i,a \text{ and } b$}
    $$

    Note that $\phi_{g_i}(\sigma_{ab}) = \sigma_{g_i(a) g_i(b)}$, so the generators and relations of $\displaystyle \lim_{\longrightarrow} Br \rtimes_{\phi} \vec{F}$ are the same as the generators and relations of $B_{\vec{F}}$.
\end{proof}

By the above proposition, we can identify $B_{\vec{F}}$ and $\displaystyle \lim_{\longrightarrow}{}^{\vec{F}} Br$ by identifying their generators. Thus we will consider the elements in $B_{\vec{F}}$ as the equivalence classes of morphisms in $\mathcal
{S}^{\frac{1}{2}}_{\vec{F}}$ which are of the form $[P, \alpha, f]$, where $P$ is a finite set of dyadic rational numbers, $\alpha \in Br_{|P|}$ and $f \in \vec{F}$. An element $[P,\alpha,f] \in B_{\vec{F}}$ can be represented by a braid $\alpha$ with bottom endpoints on $P$ and top endpoints on $f^{-1}(P)$. Such a representative is not unique: two diagrams representing the same class $[P,\alpha,f]$ may differ by adding and deleting some $f$-shifted trivial strands, each of which connects a point $p$ to $f(p)$, drawn behind all other strands.

The product of two elements $[P,\alpha,f]$ and $[Q,\beta,g]$ is obtained by concatenating suitable representatives $(P', \alpha', f)$ and $(Q', \beta', g)$ such that $f^{-1}(P') = Q'$. See Figure \ref{fig: example of thompson braid group multiplication} for an example. Note that the braids are drawn from top to bottom (to match the canonical orientations of Thompson links), which is different from the convention in \cite{MR5118516}.

\begin{figure}
    \centering
    \begin{subfigure}[c]{0.4\textwidth}
        \centering
        \includegraphics{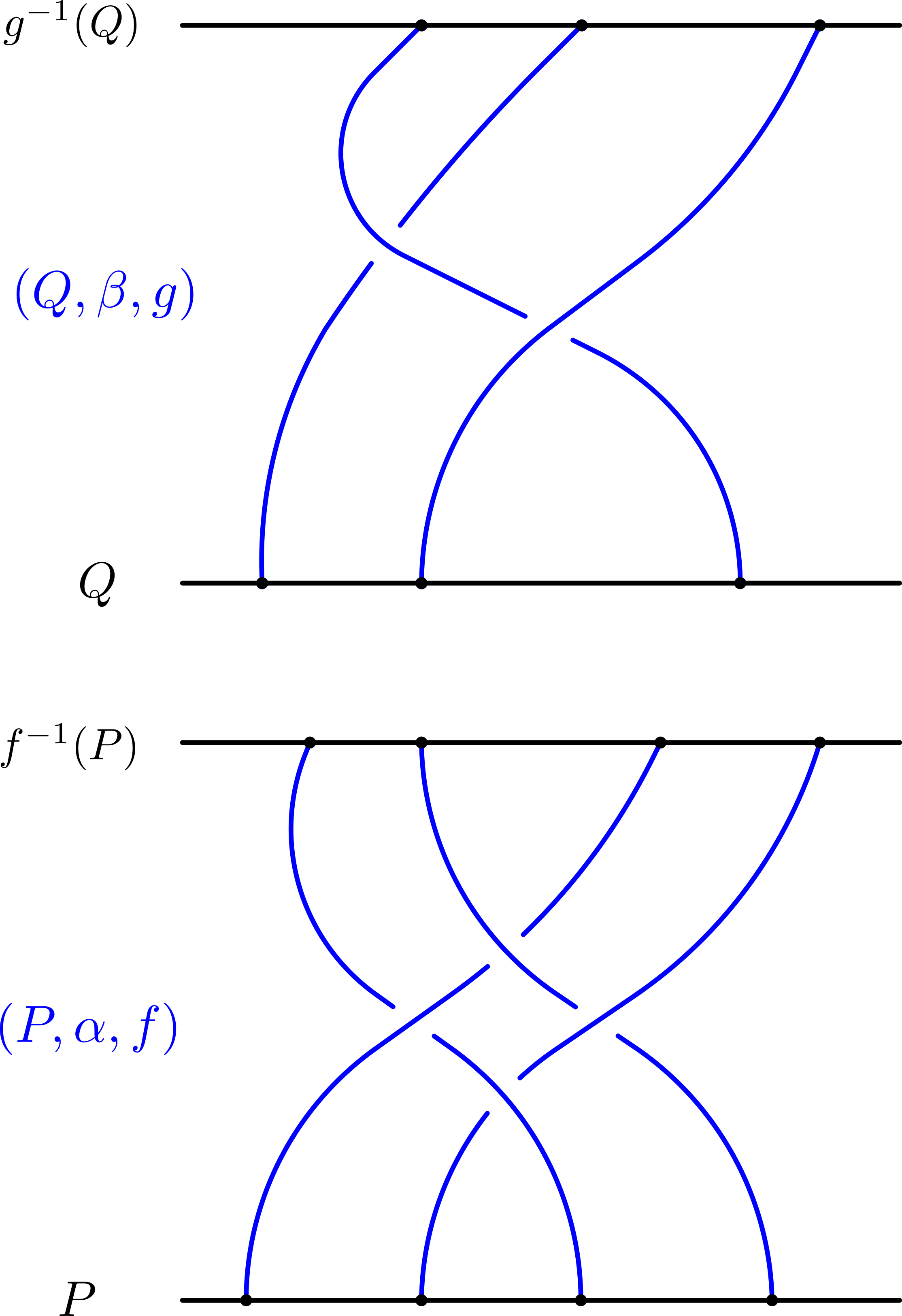}
        \caption{}
        \label{fig: two thompson braids with shifts}
    \end{subfigure}
    \hfill
    \begin{subfigure}[c]{0.45\textwidth}
        \centering
        \includegraphics{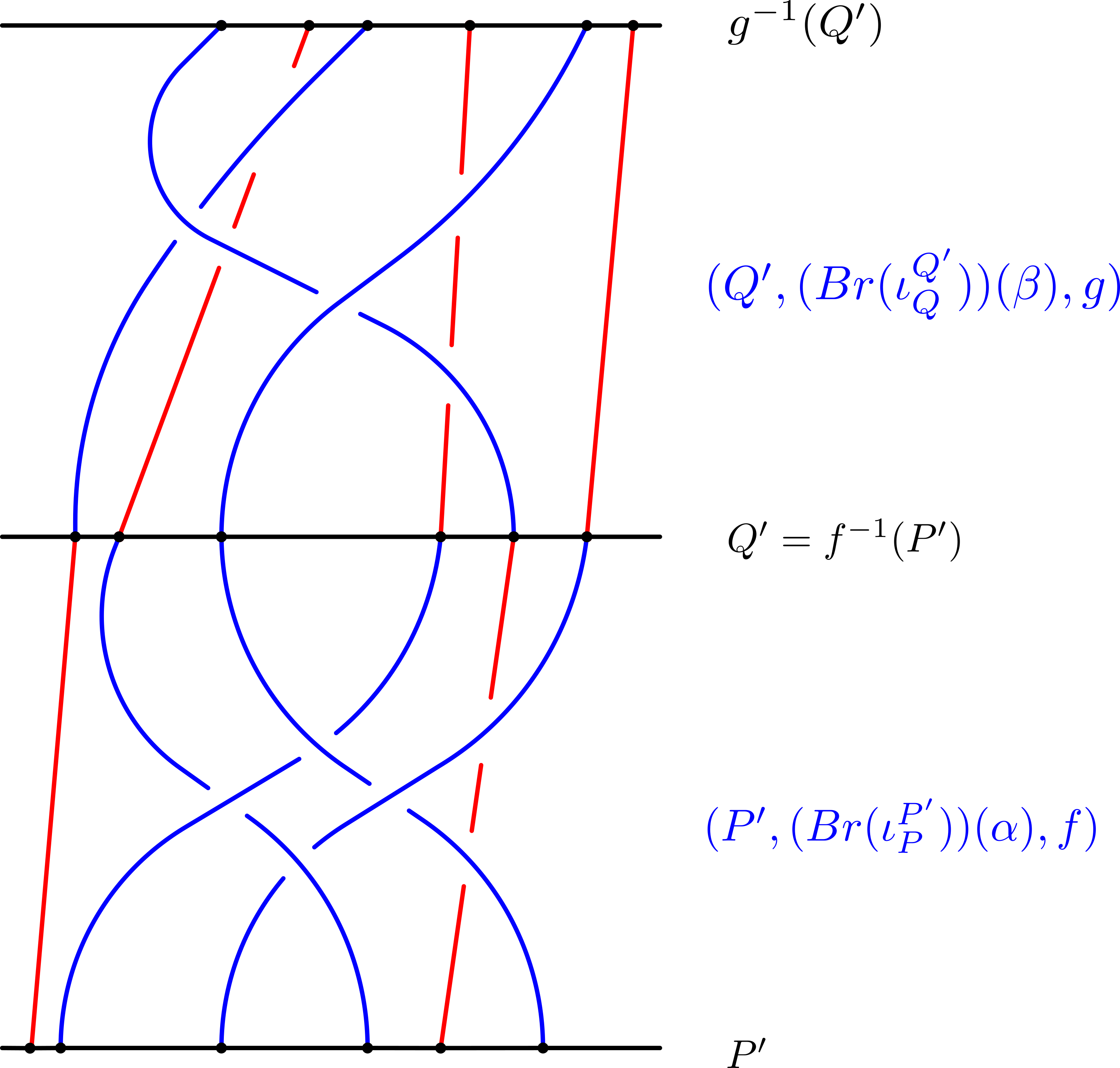}
        \vfill
        \caption{}
        \label{fig: product of thompson braids with shifts}
    \end{subfigure}
    \caption{(A) A diagram of $[P, \alpha, f]$ and a diagram of $[Q, \beta, g]$. (B) To take the product of $[P, \alpha, f]$ and $[Q, \beta, g]$, we add shifted trivial strands so that the two diagrams have matching endpoints in between. Their concatenation is the diagram of the product.}
    \label{fig: example of thompson braid group multiplication}
\end{figure}

Also, according to this isomorphism, it is easy to see that each braid group $Br_n$ can be embedded in $B_{\vec{F}}$. To achieve that, we just need to choose any set $P$ of $n$ points and let $e \in \vec{F}$ be the identity, then $\alpha \mapsto [P, \alpha, e]$ is an inclusion.

Now we can define the closure operation on $B_{\vec{F}}$. 

\begin{definition} \label{def: closed up}
  Define the closure map on $B_{\vec{F}}$ to be $\mathfrak{cl}: B_{\vec{F}}\to C_0$ sending an element $[P,\alpha,f]$ to the closure of the braid $\alpha$ in the usual sense.
\end{definition}

Notice that adding trivial strands in a braid gives distant unlinked unknots in its closure, so $\mathfrak{cl}$ is a well-defined map. By the Alexander theorem of braids, $\mathfrak{cl}$ is also a surjective map.

\section{Jones-Thompson links and the dyadic braid group}

\subsection{Half grid construction of Jones-Thompson links} \label{sec:half-grid}


A useful combinatorial way to encode link diagrams is to use grid diagrams. Each grid diagram $\mathbb{G}$ can give rise to a link diagram $\mathcal{L}(\mathbb{G})$. We assume readers are familiar with grid diagrams and refer the readers to \cite{luo2024halfgriddiagramsthompson} for a detailed discussion. In \cite{luo2024halfgriddiagramsthompson}
two authors defined half grid diagrams and associated tangles.

\begin{definition} \label{def: half grid diagram}
	A half grid diagram $\mathbb{H}$ is an $n \times 2n$ grid of squares, each of which is either empty or contains an $X$ or an $O$. We require that in each row there are exactly two nonempty squares, one containing an $X$ and the other containing an $O$, and each column has exactly one nonempty square (which may contain either an $X$ or an $O$). 
\end{definition}

\begin{definition}
	Let $\mathbb{H}$ be an $n \times 2n$ half grid diagram. The $(0,2n)$-oriented tangle $\mathcal{T}(\mathbb{H})$ associated to $\mathbb{H}$ is defined by first connecting $X$ to $O$ in each row, then connecting each $X$ or $O$ to the bottom of the grid by vertical arcs, oriented upward in the case of an $X$ and downward in the case of an $O$. We again use the crossing convention that horizontal segments are always above vertical segments.
\end{definition}

We say that $\mathbb{H}_+$ and $\mathbb{H}_-$ are compatible if the $i^{th}$ column of $\mathbb{H}_+$ and the $i^{th}$ column of $\mathbb{H}_-$ contain the same symbol for each $i$.

Given two compatible $n \times 2n$ half grid diagrams $\mathbb{H}_+$ and $\mathbb{H}_-$, we can obtain a $2n \times 2n$ grid diagram as follows: We first flip $\mathbb{H}_-$ upside down and change each $X$ to $O$ and $O$ to $X$ to get $-\overline{\mathbb{H}_-}$, then we stack $\mathbb{H}_+$ over $-\overline{\mathbb{H}_-}$ (see Figure \ref{fig: compatible half grid pair example} for an example). The resulting diagram, still denoted as $(\mathbb{H}_+, \mathbb{H}_-)$, is a $2n \times 2n$ grid diagram, because the $i^{th}$ column of $\mathbb{H}_+$ and the $i^{th}$ column of $-\overline{\mathbb{H}_-}$ contain opposite markings for each $i$. It is easy to observe that $\mathcal{L}(\mathbb{H}_+, \mathbb{H}_-) = (-\overline{\mathcal{T}({\mathbb{H}_-})}) \mathcal{T}(\mathbb{H}_+)$, where $-\overline{\mathcal{T}({\mathbb{H}_-})}$ is the vertical mirror of $\mathcal{T}({\mathbb{H}_-})$ with reversed orientation.

\begin{figure} 
	\centering
	\includegraphics[width = 0.5\textwidth]{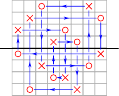}
    \put(0, 40){$-\overline{\mathbb{H}_-}$}
    \put(0, 130){$\mathbb{H}_+$}
	\caption{A pair of compatible half grid diagrams $\mathbb{H}_+$ and $\mathbb{H}_-$ can reproduce a grid diagram $(\mathbb{H}_+, \mathbb{H}_-)$.}
	\label{fig: compatible half grid pair example}
\end{figure}

Throughout this section, we will view a grid or a half grid diagram as a subset of lattice points $\mathbb{Z}_+^2$ where the grid at the lower left corner corresponds to point $(1,1)$. To describe a grid diagram or half grid diagram we just need to specify the coordinates of $X$'s and $O$'s on lattice points.

\vspace{5mm}

Let $\mathcal{E}$ be the set of all dyadic numbers on $[0,1)$. Each number $a \in \mathcal{E}$ can be written in binary. We define the conjugate of $a$ by switching the second-to-last digit of $a$, denoted as $\overline{a}$. For example, $\frac{5}{8} = 0.101$, and its conjugate is $\frac{7}{8} = 0.111$. As a convention, we let $0$ and $\frac{1}{2}$ be conjugate to each other.

As described in \cite{MR3565428} \cite{MR4886157}, each dyadic number $a \in \mathcal{E}$ is assigned a weight $\omega(a) \in \mathbb{Z}/2$ as the mod $2$ sum of the binary digits of $a$. For example, $\omega(\frac{5}{8}) = 1 + 0 + 1 \pmod 2 = 0$. We let $0$ have sign $-$ as a convention, then the weight function gives each point in $\mathcal{E}$ a sign.

Let $\mathcal{I}$ be a s.d. partition, recall that $\mathcal{E}(\mathcal{I})$ is the union of endpoints and midpoints of $\mathcal{I}$ except $1$.

\begin{definition}
    Define $\mathcal{E}_+(\mathcal{I})$ to be the subset of points in $\mathcal{E}(\mathcal{I})$ with $+$ sign, define $\mathcal{E}_-(\mathcal{I})$ to be the subset of points in $\mathcal{E}(\mathcal{I})$ with $-$ sign.
\end{definition}

We consider two orders $\leq_x$ and $\leq_y$ on $\mathcal{E}$ as follows. The order $\leq_x$ is the normal order of real numbers. Define $a \leq_y b$ if $a$'s digit length in binary is strictly larger than $b$'s, or they have the same length but $a \leq_x b$. For example, $0.01 \leq_y 0.11 \leq_y 0.1$. It is easy to check that $\leq_y$ is a total order.




We first restrict $\leq_x$ to $\mathcal{E}(\mathcal{I})$, then order the elements in $\mathcal{E}(\mathcal{I})$ as $x_1 <_x x_2 <_x \cdots <_x x_{2n}$. We define a bijection

\begin{align*}
	p_x: \mathcal{E}(\mathcal{I})  &\to \{1,2,...,2n\} \\
	x_i &\mapsto i, \quad i = 1,2,...,2n
\end{align*}

Then restrict $\leq_y$ to $\mathcal{E}_+(\mathcal{I})$ and reorder elements in $\mathcal{E}_+(I)$ as $y_1 <_y y_2 <_y ... <_y y_n$. We define

\begin{align*}
	p_y: & \mathcal{E}(\mathcal{I}) \to \{1,2,...,n\} \\
	&\begin{cases}
		y_i \mapsto i, & i=1,2,...,n\\
		 \overline{y_i} \mapsto i, & i=1,2,...,n
	\end{cases}
\end{align*}

Then $p_y|_{\mathcal{E}_+(\mathcal{I})}: \mathcal{E}_+(\mathcal{I}) \to \{1,2,...,n\}$ and $p_y|_{\mathcal{E}_-(\mathcal{I})}: \mathcal{E}_-(\mathcal{I}) \to \{1,2,...,n\}$ are both bijections.

Next, we will show how to construct an $n \times 2n$ half grid diagram associated to $\mathcal{I}$. We let each point $a \in \mathcal{E}(\mathcal{I})$ assign a marking to the $(p_x(a), p_y(a))^{th}$ entry in the  $n \times 2n$ grid, where the marking is $X$ if $a \in \mathcal{E}_+(\mathcal{I})$, $O$ if $a \in \mathcal{E}_-(\mathcal{I})$. We denote the resulting diagram as $\mathbb{H}_\mathcal{I}$.

\begin{proposition}\cite[Theorem 4.10]{luo2024halfgriddiagramsthompson} {\label{prop: half grid diagram from SDI}}
	$\mathbb{H}_{\mathcal{I}}$ is a half grid diagram.
\end{proposition}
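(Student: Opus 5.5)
The plan is to verify the three defining conditions of Definition \ref{def: half grid diagram} directly. These are: the diagram has size $n \times 2n$; each column contains exactly one marking; and each row contains exactly one $X$ and exactly one $O$. The column condition and the size should be immediate once $|\mathcal{E}(\mathcal{I})| = 2n$ is known. The real content is to show that $p_y$ is well defined and that it pairs each $X$ with exactly one $O$. This in turn reduces to a structural statement: conjugation $a \mapsto \overline{a}$ is an involution of $\mathcal{E}(\mathcal{I})$ that exchanges $\mathcal{E}_+(\mathcal{I})$ and $\mathcal{E}_-(\mathcal{I})$.

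\textbf{Step 1: describe $\mathcal{E}(\mathcal{I})$ through the binary tree of $\mathcal{I}$.} View $\mathcal{I}$ as a rooted binary tree. Its vertices are the s.d. intervals obtained by successively halving $[0,1]$, and its leaves are the $n$ subintervals of $\mathcal{I}$; so the tree has $n-1$ internal vertices and $2n-1$ vertices in total. Each breakpoint $a_i \neq 0$ with $i<n$ is the midpoint of the internal vertex whose splitting created it. Each midpoint $\frac{a_i+a_{i+1}}{2}$ is the midpoint of a leaf. Hence
\[
\mathcal{E}(\mathcal{I}) = \{0\} \cup \{\text{midpoints of all vertices of the tree}\}.
\]
These midpoints are pairwise distinct, since the midpoint of $[k/2^m,(k+1)/2^m]$ has binary length exactly $m+1$ with numerator $2k+1$. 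This gives $|\mathcal{E}(\mathcal{I})| = 1 + (2n-1) = 2n$. Consequently $p_x$ is a bijection onto $\{1,\dots,2n\}$, so each of the $2n$ columns receives exactly one marking.

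\textbf{Step 2: conjugation is the sibling swap.} Take a non-root vertex $J=[k/2^m,(k+1)/2^m]$ with midpoint $(2k+1)/2^{m+1}$. Flipping the second-to-last binary digit flips the last digit of $k$, so it produces the midpoint of the sibling interval of $J$. The sibling of any non-root vertex is again a vertex of the tree, because the parent was split into both halves. Together with the convention $0 \leftrightarrow \frac12$, where $\frac12$ is the root midpoint, this shows that conjugation is a fixed-point-free involution of $\mathcal{E}(\mathcal{I})$.

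Conjugation also changes exactly one binary digit and preserves the length. It therefore changes the weight $\omega$ by $1$, and so it swaps signs. For the special pair we check $\omega(\frac12)=1$ directly, so $\frac12$ has sign $+$ while $0$ has sign $-$ by convention. It follows that $\mathcal{E}_+(\mathcal{I})$ and $\mathcal{E}_-(\mathcal{I})$ each have $n$ elements, and that conjugation is a bijection $\mathcal{E}_+(\mathcal{I}) \to \mathcal{E}_-(\mathcal{I})$.

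\textbf{Step 3: rows.} By Step 2, every element of $\mathcal{E}(\mathcal{I})$ is either some $y_i \in \mathcal{E}_+(\mathcal{I})$ or $\overline{y_i}$ for a unique $i$. Hence $p_y$ is well defined, and its restrictions to $\mathcal{E}_\pm(\mathcal{I})$ are bijections onto $\{1,\dots,n\}$. So row $i$ contains exactly two markings: an $X$ from $y_i$ and an $O$ from $\overline{y_i}$. Combined with Step 1, $\mathbb{H}_\mathcal{I}$ is an $n\times 2n$ half grid diagram.

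The main obstacle is Step 2, namely checking that $\mathcal{E}(\mathcal{I})$ is closed under conjugation. At first sight the conjugate of a leaf midpoint looks unrelated to $\mathcal{I}$. The key observation that resolves this is that the conjugate is the midpoint of the sibling vertex, which is either a leaf (and so contributes a midpoint of $\mathcal{I}$) or an internal vertex (and so contributes a breakpoint of $\mathcal{I}$).
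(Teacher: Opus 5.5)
Your proof is correct. The paper gives no argument of its own: it imports the result from \cite[Theorem 4.10]{luo2024halfgriddiagramsthompson}, and right after defining $p_y$ it simply asserts that $p_y|_{\mathcal{E}_\pm(\mathcal{I})}$ are bijections.

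Your Steps 2--3 supply exactly the fact that assertion silently relies on. That fact is that $\mathcal{E}(\mathcal{I})$ is closed under conjugation, and conjugation swaps signs. Identifying conjugation with the sibling swap in the binary tree of $\mathcal{I}$ is a clean, global way to see this.

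An equivalent local route is induction on elementary splittings, starting from $\mathcal{E}=\{0,\frac{1}{2}\}$. Splitting $[k/2^m,(k+1)/2^m]$ turns its old midpoint into a breakpoint. It also adds the two points $\frac{4k+1}{2^{m+2}}$ and $\frac{4k+3}{2^{m+2}}$, which differ only in their second-to-last digit. So the new points are conjugate to each other and have opposite weights.

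One point you should make explicit. The paper never states which weight corresponds to which sign, and your treatment of the pair $\{0,\frac{1}{2}\}$ uses the reading ``weight $1$ means $+$''. This is the convention the paper actually uses: in Section 6 one has $s(\frac{1}{4})=s(\frac{1}{2})=+$ and $s(\frac{3}{4})=s(\frac{5}{8})=-$. The convention is also forced. Under the opposite reading, $0$ (declared $-$) and $\frac{1}{2}$ (weight $1$) would both be $-$. Then already for $n=1$ we would have $\mathcal{E}_+(\mathcal{I})=\emptyset$, and $\mathbb{H}_{\mathcal{I}}$ would not be a half grid diagram.
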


See Figure \ref{fig: corner isotopy} and \ref{fig: tangle to half grid} for an illustration of the construction of $\mathbb{H}_{\mathcal{I}}$. 
The equivalence between the half grid construction and Jones' original construction of Thompson links was established in the following theorem. Compare Figure \ref{fig:half-grid-pair-with-shift} and Figure \ref{fig: construction of link with shift example} for a schematic idea.

\begin{theorem} \cite[]{luo2024halfgriddiagramsthompson}\label{thm: equivalent oriented links}
	Suppose that $(\mathcal{I}, \mathcal{J})$ is a pair of s.d. partitions representing $g \in \vec{F}$, then
	$$
	\mathcal{L}(\mathbb{H}_{\mathcal{I}}, \mathbb{H}_{\mathcal{J}})= L(\mathcal{I}, \mathcal{J})
	$$
	as oriented links.
\end{theorem}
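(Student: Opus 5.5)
The plan is to reduce the statement to a tangle-level comparison for a single s.d. partition, and then glue. I will prove that for every s.d. $n$-partition $\mathcal{I}$, the oriented $(0,2n)$-tangle $\mathcal{T}(\mathbb{H}_\mathcal{I})$ is isotopic rel boundary to Jones' tangle $\xi \circ T^\delta_\mathcal{I}$, with its canonical orientation. Here the $2n$ bottom endpoints of $\mathcal{T}(\mathbb{H}_\mathcal{I})$ (the columns, ordered by $p_x$) are identified with $\mathcal{E}(\mathcal{I})$ in increasing order.

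Granting this, consider $(\mathcal{I},\mathcal{J})$ representing $g$. The tangle $g^\mathcal{I}_\mathcal{J}$ joins the $i^{th}$ point of $\mathcal{E}(\mathcal{I})$ to the $i^{th}$ point of $\mathcal{E}(\mathcal{J})$, which is exactly how the $i^{th}$ column of $\mathbb{H}_\mathcal{I}$ is stacked over the $i^{th}$ column of $-\overline{\mathbb{H}_\mathcal{J}}$. Thus $L(\mathcal{I},\mathcal{J}) = \xi\circ T^\delta_\mathcal{I}\circ g^\mathcal{I}_\mathcal{J}\circ \overline{T^\delta_\mathcal{J}}\circ\overline{\xi}$ becomes $(-\overline{\mathcal{T}(\mathbb{H}_\mathcal{J})})\,\mathcal{T}(\mathbb{H}_\mathcal{I}) = \mathcal{L}(\mathbb{H}_\mathcal{I},\mathbb{H}_\mathcal{J})$. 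For this to make sense, $\mathbb{H}_\mathcal{I}$ and $\mathbb{H}_\mathcal{J}$ must be compatible, meaning the $i^{th}$ points of $\mathcal{E}(\mathcal{I})$ and $\mathcal{E}(\mathcal{J})$ carry the same sign. I expect this to follow from the weight (chessboard) characterization of $\vec{F}$ from \cite{MR3565428} \cite{MR4886157}. The same sign-matching shows the orientations of the glued pieces agree.

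The tangle statement is proved by induction on refinement. For the base case $\delta=\{0,1\}$ we have $\mathcal{E}(\delta)=\{0,\frac12\}$ with $0$ negative and $\frac12$ positive. Then $\mathbb{H}_\delta$ is a single row with $O$ in column $1$ and $X$ in column $2$, and its tangle is a single oriented arc, i.e.\ $\xi$.

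For the inductive step, let $\mathcal{I}'$ be obtained from $\mathcal{I}$ by splitting a subinterval $[a,b]$ with midpoint $m$.
\begin{itemize}
\item In Jones' picture, the root at $m$ gets a trivalent vertex, replaced by a crossing $R$. The point $m$ becomes a breakpoint joined by a vertical strand, and two new points $u=a+\frac{b-a}{4}$ and $v=a+\frac{3(b-a)}{4}$ appear.
\item In binary, $u$ and $v$ have the same length, one more digit than $m$, and differ exactly in the second-to-last digit. So $u$ and $v$ are conjugate, and one of them is positive.
\item Since every point of $\mathcal{E}(\mathcal{I}')$ other than $u,v$ is already in $\mathcal{E}(\mathcal{I})$, the order $\leq_y$ places the new positive point of $\{u,v\}$ above all other positive points. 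This needs checking: $\leq_y$ puts longer words first, so I must verify that "first" means the new row is inserted at the extreme end (bottom) of the grid. The old rows keep their relative order, and the old column of $m$ now ends at the new row.
\item Hence $\mathbb{H}_{\mathcal{I}'}$ is $\mathbb{H}_\mathcal{I}$ with one extra row containing markings at $u$ and $v$. The only column strictly between them is $m$'s, so the new horizontal segment passes over exactly one vertical strand, namely $m$'s.
\end{itemize}
A local isotopy near the bottom of column $m$ (the corner isotopy of Figure \ref{fig: corner isotopy}) then identifies this configuration with gluing $R$ at $m$ and adding the vertical strand to the breakpoint.

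The main obstacle is this local step: matching the over/under data and the orientation. I will check by a weight computation that the signs of $u$, $v$ and $m$ force the $X/O$ placement to agree with Jones' canonical orientation of $R$. This amounts to showing $\omega(u),\omega(v)$ are opposite and compatible with $\omega(m)$. I will also check that the old strand at $m$, which previously ended at the marking in column $m$, now continues down under the new horizontal arc, as the crossing $R$ requires.
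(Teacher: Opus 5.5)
Your architecture matches the one the paper has in mind. You identify $\mathcal{T}(\mathbb{H}_\mathcal{I})$ with $\xi\circ T^\delta_\mathcal{I}$ rel boundary, then glue along $g^\mathcal{I}_\mathcal{J}$; compatibility holds because $g$ carries $\mathcal{E}(\mathcal{I})$ onto $\mathcal{E}(\mathcal{J})$ in order and $\vec F$ preserves $\omega$. The gap is in the inductive step, which rests on a false claim. You assert that the new positive point of $\{u,v\}$ is $\le_y$-extremal, so the new row is the bottom row. Whether ``first'' means bottom is not the issue, since $y_1$ does sit in row $1$; extremality itself fails. If $[a,b]$ has length $2^{-k}$, then $u,v$ have binary length $k+2$, but $\mathcal{E}(\mathcal{I})$ may already contain longer points from finer subintervals elsewhere. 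Take $\mathcal{I}=\{0,\frac14,\frac38,\frac12,1\}$ and split $[\frac12,1]$. The new pair is $\{\frac58,\frac78\}$ with $\frac78=0.111$ positive, but $\frac{7}{16}=0.0111$ and $\frac18=0.001$ are positive points of $\mathcal{E}(\mathcal{I})$ that precede $\frac78$ in $\le_y$, so the new row is row $3$ of $5$. The picture ``one new bottom row crossing only column $m$'', which is what lets you recognise the gluing of $R$, is therefore not available along an arbitrary refinement sequence.

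There are two ways to repair this. First, you can fix the refinement order: build $\mathcal{I}$ from $\delta$ breadth-first, splitting the leaves of each depth from right to left. When a depth-$k$ leaf is split, all existing points have length at most $k+2$, and those of length exactly $k+2$ lie to its right, so the new positive point really is $y_1$. Second, you can drop the position claim. For sibling nodes $M,M'$ with parent $Q$, the only columns strictly between $\mu(M)$ and $\mu(M')$ are $\mu(Q)$ and midpoints of descendants of $M,M'$. Hence, wherever the new row lands, no other horizontal segment enters the strip between columns $u$ and $v$ below it, and the marking in column $m$ lies strictly higher (its length is $k+1$). The modification is then supported in that strip and can be slid down to the foot of column $m$.

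Separately, the orientation claim is still only promised. It reduces cleanly as follows. Every component of $\xi\circ T^\delta_\mathcal{I}$ is an arc joining a point of $\mathcal{E}(\mathcal{I})$ to its conjugate (sibling midpoints, and $0$ with $\frac12$), which are exactly the rows of $\mathbb{H}_\mathcal{I}$. So it suffices to show that Jones' canonical orientation enters the tangle exactly at the points of weight $1$. That needs Jones' 2-colouring definition of the orientation together with its description by $\omega$ from \cite{MR3565428}, and this paper does not supply either.

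For comparison, the argument the paper alludes to (Figure \ref{fig: corner isotopy}) is not inductive. A single global ``cornered isotopy'' places each marking directly above its point of $\mathcal{E}(\mathcal{I})$ at a height set by its binary length, which sidesteps row insertion entirely.
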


\begin{figure}
    \centering
    \begin{subfigure}{0.4\textwidth}
        \centering
        \includegraphics[width=\textwidth]{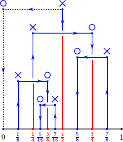}
        \put(-205, 15){$\mathcal{E}(\mathcal{I})$}
        \caption{}
        \label{fig: corner isotopy}
    \end{subfigure}
    \hfill
    \begin{subfigure}{0.4\textwidth}
        \centering
        \includegraphics[width=\textwidth]{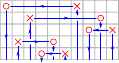}
        \put(-200, 50){$\mathbb{H}_{\mathcal{I}}$}
        \caption{}
        \label{fig: tangle to half grid}
    \end{subfigure}
    \caption{(A) An illustration of the construction of $\mathbb{H}_\mathcal{I}$. Each point in $\mathcal{E}(\mathcal{I})$ corresponds to a marking vertically above it, whose height is determined by the length of its binary expansion. Note that it is a ``cornered isotopy'' of $\xi \circ T^\delta_\mathcal{I}$ in Figure \ref{fig: construction of link with shift example} (with a canonical orientation). (B) $\mathbb{H}_\mathcal{I}$ as a standard half grid diagram.}
\end{figure}


\subsection{From Thompson's group to the dyadic braid group}

In this section we will define the map $\iota$ in Theorem \ref{thm: embedding-preserves-link}, and verify that $\iota$ is a group embedding.

\begin{definition}
    Given an element $g\in \vec{F}$, we define $\iota(g)=[P,\alpha,g]\in B_{\vec{F}}$, where $P, \alpha $ are defined through the following procedures:

    \begin{enumerate}
        \item Find any pair of s.d. partitions $(\mathcal{I}, \mathcal{J})$ representing $g$.
        \item Let $P$ be $\mathcal{E}_+(\mathcal{J})$, the collection of endpoints and midpoints of $\mathcal{J}$ with $+$ sign.
        \item Let $\alpha$ be $\mathfrak{b}(\mathbb{H}_{\mathcal{I}}, \mathbb{H}_{\mathcal{J}})$, the braid induced from the grid diagram $(\mathbb{H}_{\mathcal{I}}, \mathbb{H}_{\mathcal{J}})$ as follows:
    \end{enumerate}

Given a grid diagram $\mathbb{G}$, one obtains a link diagram $\mathcal{L}(\mathbb{G})$ by drawing horizontal and vertical lines joining $X$'s and $O$'s. We turn it into a braid $\mathfrak{b}(\mathbb{G})$ by replacing any vertical line oriented upwards from $O$ to $X$ by two vertical lines, one pointing downwards from the $O$, one pointing downwards to the $X$, and having these new vertical lines pass under all horizontal line segments. One can check that the closure of the braid $\mathfrak{b}(\mathbb{G})$ is exactly the link $\mathcal{L}(\mathbb{G})$. This grid-to-braid operation is also used in \cite{MR2500576}\cite{MR1339757}\cite{MR2232855}, see Figure \ref{fig:grid-to-braid-example} for a specific example.

\begin{figure}
	\centering
	\includegraphics[width = 0.6\textwidth]{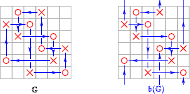}
	\caption{An example of turning a grid diagram $\mathbb{G}$ into a braid $\mathfrak{b}(\mathbb{G})$.}
	\label{fig:grid-to-braid-example}
\end{figure}

\end{definition}

Overall, the dyadic braid $[P, \alpha, g]$ is obtained by doing the above grid-to-braid operation on the grid diagram $(\mathbb{H}_{\mathcal{I}}, \mathbb{H}_\mathcal{J})$ with a $g$-shift between the two half grids (Figure \ref{fig:half-grid-pair-with-shift}). The resulting dyadic braid $[P, \alpha, g]$ is shown in Figure \ref{fig:half-grid-braid-with-shift}.

\begin{figure}\label{fig: dyadic grid to dyadic braid}
    \centering
    \begin{subfigure}{0.4\textwidth}
        \includegraphics[width=\textwidth]{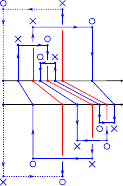}
        \put(-205, 145){$\mathcal{E}(\mathcal{I})$}
        \put(-205, 110){$\mathcal{E}(\mathcal{J})$}
        \caption{}
        \label{fig:half-grid-pair-with-shift}
    \end{subfigure}
    \hfill
    \begin{subfigure}{0.4\textwidth}
        \includegraphics[width=\textwidth]{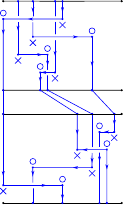}
        \put(-210, 282){$\mathcal{E}_+(\mathcal{I})$}
        \put(-210, 0){$\mathcal{E}_+(\mathcal{J})$}
        \put(-210, 157){$\mathcal{E}_-(\mathcal{I})$}
        \put(-210, 122){$\mathcal{E}_-(\mathcal{J})$}
        \caption{}
        \label{fig:half-grid-braid-with-shift}
    \end{subfigure}
    \caption{(A) Suppose $g \in \vec{F}$ is represented by $(\mathcal{I}, \mathcal{J})$. Recall that Figure \ref{fig: corner isotopy} is obtained by doing a ``cornered'' isotopy to $\xi \circ T^\delta_\mathcal{I}$ in Figure \ref{fig: construction of link with shift example}. We do a similar isotopy to $\overline{T^\delta_\mathcal{J}} \circ \overline{\xi}$. The convention for $X$ and $O$ markings is given in Section \ref{sec:half-grid}. Then we obtain a grid diagram $(\mathbb{H}_\mathcal{I}, \mathbb{H}_\mathcal{J})$ with shift. (B) The dyadic braid $\iota(g)=[P, \alpha, g]$ is obtained by doing the grid-to-braid operation on the grid diagram $(\mathbb{H}_{\mathcal{I}}, \mathbb{H}_\mathcal{J})$ with shift in (a).}
\end{figure}

\begin{lemma}\label{lem: iota is well defined}
    $\iota: \vec{F} \to B_{\vec{F}}$ is a well-defined map.
\end{lemma}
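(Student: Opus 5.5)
The map $\iota$ depends only on the choice of the pair $(\mathcal{I},\mathcal{J})$ representing $g$, so the plan is to show that this choice does not change the class $[P,\alpha,g]$. Any two pairs representing the same $g\in F$ have a common refinement: both are obtained from a third pair by a finite sequence of elementary expansions. In such an expansion we replace a subinterval $[a_i,a_{i+1}]$ of $\mathcal{I}$ by its two halves and, simultaneously, replace the corresponding subinterval $[b_i,b_{i+1}]=g([a_i,a_{i+1}])$ of $\mathcal{J}$ by its two halves. It therefore suffices to prove that a single elementary expansion $(\mathcal{I},\mathcal{J})\mapsto(\mathcal{I}',\mathcal{J}')$ gives $[\mathcal{E}_+(\mathcal{J}),\mathfrak{b}(\mathbb{H}_{\mathcal{I}},\mathbb{H}_{\mathcal{J}}),g]=[\mathcal{E}_+(\mathcal{J}'),\mathfrak{b}(\mathbb{H}_{\mathcal{I}'},\mathbb{H}_{\mathcal{J}'}),g]$ in $B_{\vec F}$.

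First, I describe how $\mathcal{E}(\cdot)$ and the half grid change. Splitting $[b_i,b_{i+1}]$ keeps the old points $b_i$ and $m=\frac{b_i+b_{i+1}}{2}$, and adds the two new midpoints $\frac{b_i+m}{2}$ and $\frac{m+b_{i+1}}{2}$. These two new points have binary length one more than $m$ and are conjugate to each other, so exactly one of them has sign $+$. Hence $\mathcal{E}_+(\mathcal{J}')=\mathcal{E}_+(\mathcal{J})\cup\{p\}$ for a single new point $p$, and likewise $\mathcal{E}_+(\mathcal{I}')=\mathcal{E}_+(\mathcal{I})\cup\{p'\}$ with $p'=g^{-1}(p)$. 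The last equality holds because $g\in\vec F$ preserves signs, and we may take it from Jones' definition of $\vec F$ together with \cite{luo2024halfgriddiagramsthompson}. In the half grid $\mathbb{H}_{\mathcal{J}'}$, the new $X$ and $O$ sit in a new row. By the $\leq_y$ ordering, which puts longer binary expansions lower, this row lies below the rows of all other points, and it contains a short horizontal segment joining two adjacent columns. So $\mathcal{T}(\mathbb{H}_{\mathcal{J}'})$ is $\mathcal{T}(\mathbb{H}_{\mathcal{J}})$ with a small arc (a cup) added near the bottom, and with the $X$/$O$ columns at $m$ relabelled accordingly. The same holds for $\mathbb{H}_{\mathcal{I}'}$.

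Next, I apply the grid-to-braid operation $\mathfrak{b}$ to the stacked diagram with the $g$-shift. The new cup in $\mathbb{H}_{\mathcal{I}'}$ and the new cap in $-\overline{\mathbb{H}_{\mathcal{J}'}}$ are matched by the shift, which sends $p'$ to $p$. Tracing the new vertical lines shows that the braid gains one strand, running from $p'$ at the top to $p$ at the bottom, possibly together with a local change near the old strand through the midpoint. The goal is to show that, after braid isotopy, this new strand becomes a $g$-shifted trivial strand lying behind all other strands. The paper's convention supports this: vertical lines created by $\mathfrak{b}$ pass under all horizontal segments. Once that is done, the two classes agree by the definition of $\sim$ on $\mathcal{S}^{\frac12}_{\vec F}$, since adding a backside trivial strand is exactly the map $Br(\iota_P^{P\cup\{p\}})$, and the Thompson component $g$ is unchanged.

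The main obstacle is the local analysis. I need to check that the grid-to-braid conversion of the small new row produces a strand that is honestly behind everything, and that it is not linked with the old strand at $m$ through a crossing between the new horizontal segment and existing vertical lines. My first approach is a case split on whether $m$ carries sign $+$ or $-$, which decides whether the new $+$ point is the left or the right new midpoint. In each case I would draw the local picture directly. If some crossing does appear, I would try to remove it with a Reidemeister II move or a slide against the lowest row, using the fact that the new row is the bottommost.
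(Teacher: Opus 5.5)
\textbf{Verdict: there is a gap.} Your overall route is the paper's. You reduce to a refinement, show that the refinement only adds a $g$-shifted strand lying behind everything, and conclude via $Br(\iota_P^{P\cup\{p\}})$ and the definition of $\sim$. Your count $\mathcal{E}_+(\mathcal{J}')=\mathcal{E}_+(\mathcal{J})\cup\{p\}$ with $p'=g^{-1}(p)$ is also right. However, the one step that carries the lemma is left as a plan, and the picture you give for it is wrong in two places. Write $p_1=\frac{b_i+m}{2}$ and $p_2=\frac{m+b_{i+1}}{2}$.

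\emph{The new row is not the bottommost.} The order $\leq_y$ compares binary lengths across all of $\mathcal{E}(\mathcal{J}')$, so any subinterval of $\mathcal{J}$ shorter than $[b_i,b_{i+1}]$ contributes lower rows. For example, take $\mathcal{J}=\{0,\frac18,\frac14,\frac12,1\}$ and split $[\frac12,1]$. The new $+$ point $\frac78$ has length $3$, while the row of $\frac1{16}$ lies below it. So ``a slide against the lowest row'' is not available.

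\emph{The new $X$ and $O$ are not in adjacent columns.} Since $\mathcal{E}(\mathcal{J}')\cap(b_i,b_{i+1})=\{p_1<m<p_2\}$, the new horizontal segment passes over the column of $m$. In $\mathcal{T}(\mathbb{H}_{\mathcal{J}'})$ the new arc bridges over the arc at $m$ rather than being an isolated cup, and that is exactly where crossings with old strands can appear.

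Two facts close the argument.
\begin{enumerate}
\item[(i)] The horizontal segment of a $+$ point $y$ of binary length $\ell\geq 2$ spans only columns inside the dyadic interval of length $2^{-(\ell-2)}$ containing $y$ and $\overline{y}$. Rows below the new one have $\ell\geq j+2$, where $b_{i+1}-b_i=2^{-j}$. By nesting of dyadic intervals, none of them crosses the columns $p_1,m,p_2$; the same holds in $\mathbb{H}_{\mathcal{I}'}$.
\item[(ii)] After the grid-to-braid operation, a $+$ column is occupied only between the top edge and its $X$ in the upper half, and between its $O$ and the bottom edge in the lower half. A $-$ column runs through the middle between its two markings.
\end{enumerate}

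Since $p_1$ has the sign of $m$ and $p_2$ the opposite sign, your case split then goes through.
\begin{itemize}
\item If $s(m)=+$, the columns of $g^{-1}(m)$ and $m$ are empty at the heights of the two new rows, because their markings lie farther from the middle. So the new strand from $p'$ to $p$ only ever passes under horizontal segments and is already a backside strand.
\item If $s(m)=-$, the new strand passes over the $-$ strand through $m$ twice, in opposite directions. By (i) nothing else lies in the enclosed region, so this is a cancelling pair $\sigma\sigma^{-1}$ (Reidemeister II), after which the new strand lies behind everything.
\end{itemize}

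This is what the paper asserts pictorially: the refined grid adds a distant unknot above all strands, which becomes a backside trivial strand after Reidemeister II moves. Without (i) and (ii), or with the bottommost-row claim standing in for (i), your proposal does not yet establish it.
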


\begin{proof}
    Given an element $g \in \vec{F}$, suppose that $(\mathcal{I}, \mathcal{J})$ and $(\mathcal{I}', \mathcal{J}')$ are two pairs of s.d. partitions representing $g$, we want to show that $[\mathcal{E}_+(\mathcal{J}), \mathfrak{b}(\mathbb{H}_{\mathcal{I}}, \mathbb{H}_{\mathcal{J}}), g] = [\mathcal{E}_+(\mathcal{J}'), \mathfrak{b}(\mathbb{H}_{\mathcal{I}'}, \mathbb{H}_{\mathcal{J}'}), g]$.
    
    Let $(\mathcal{I}'', \mathcal{J}'')$ be a common refinement of $(\mathcal{I}, \mathcal{J})$ and $(\mathcal{I}', \mathcal{J}')$. Note that the grid diagram $(\mathbb{H}_{\mathcal{I}''}, \mathbb{H}_{\mathcal{J}'')}$ differs from $(\mathbb{H}_\mathcal{I}, \mathbb{H}_\mathcal{J})$ by adding some distant unlinked unknots lying above all other strands (Figure \ref{fig:adding-trivial-unknots-to-half-grids}). Hence after we turn $(\mathbb{H}_{\mathcal{I}''}, \mathbb{H}_{\mathcal{J}''})$ into a braid $\mathfrak{b}(\mathbb{H}_{\mathcal{I}''}, \mathbb{H}_{\mathcal{J}''})$, with some Reidemeister II moves, those distant unknots become trivial strands lying under all other strands (Figure \ref{fig:adding-trivial-strand-to-half-grids}). The same holds for $(\mathbb{H}_{\mathcal{I}'}, \mathbb{H}_{\mathcal{J}'})$ and $(\mathbb{H}_{\mathcal{I}''}, \mathbb{H}_{\mathcal{J}''})$. Moreover, by the procedure of turning the grid diagram into a braid (depicted in Figure \ref{fig:half-grid-pair-with-shift} and Figure \ref{fig:half-grid-braid-with-shift}) we see the refinement of the dyadic interval exactly corresponds to adding trivial strands on the dyadic braids. Thus, $[\mathcal{E}_+(\mathcal{J}), \mathfrak{b}(\mathbb{H}_{\mathcal{I}}, \mathbb{H}_{\mathcal{J}}), g] = [\mathcal{E}_+(\mathcal{J}'), \mathfrak{b}(\mathbb{H}_{\mathcal{I}'}, \mathbb{H}_{\mathcal{J}'}), g]=[\mathcal{E}_+(\mathcal{J}''), \mathfrak{b}(\mathbb{H}_{\mathcal{I}''}, \mathbb{H}_{\mathcal{J}''}), g]$.

\end{proof}

\begin{figure}
    \centering
    \includegraphics[width=\linewidth]{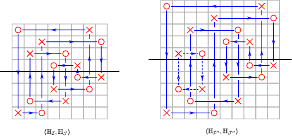}
    \caption{If $(\mathcal{I}'', \mathcal{J}'')$ is a refinement of $(\mathcal{I}, \mathcal{J})$, then $(\mathbb{H}_\mathcal{I''}, \mathbb{H}_\mathcal{J''})$ differs from $(\mathbb{H}_\mathcal{I}, \mathbb{H}_\mathcal{J})$ by adding some distant unlinked unknots lying above all other strands.}
    \label{fig:adding-trivial-unknots-to-half-grids}
\end{figure}

\begin{figure}
    \centering
    \includegraphics[width=\linewidth]{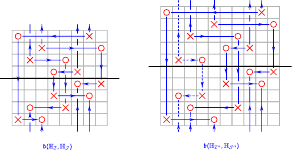}
    \caption{Turning the two grid diagrams in Figure \ref{fig:adding-trivial-unknots-to-half-grids} into braids, we can see that the added distant unlinked unknots become trivial strands lying under all other strands in the braid.}
    \label{fig:adding-trivial-strand-to-half-grids}
\end{figure}

\begin{lemma} \label{lem: iota preserves link}
    The map $\iota: \vec{F} \to B_{\vec{F}}$ satisfies $L(g)=\mathfrak{cl}(\iota(g))$ for all $g \in \vec{F}$. That is, $\iota$ preserves the link.
\end{lemma}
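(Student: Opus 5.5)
The plan is to chain three identifications, each either already in the paper or immediate from the definitions. Fix $g \in \vec{F}$ and a pair of s.d. $n$-partitions $(\mathcal{I},\mathcal{J})$ representing $g$. By Lemma \ref{lem: iota is well defined} we may compute $\iota(g)$ from this particular pair, so $\iota(g) = [\mathcal{E}_+(\mathcal{J}), \mathfrak{b}(\mathbb{H}_{\mathcal{I}}, \mathbb{H}_{\mathcal{J}}), g]$. By Definition \ref{def: closed up}, $\mathfrak{cl}(\iota(g))$ is the class in $C_0$ of the ordinary braid closure of $\alpha = \mathfrak{b}(\mathbb{H}_{\mathcal{I}}, \mathbb{H}_{\mathcal{J}})$. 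The closure ignores the shift $g$ and only uses the braid $\alpha$ with its endpoint sets $P$ and $g^{-1}(P)$, which are identified in order.

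The first step is to relate this closure to the grid diagram. The paper records that for any grid diagram $\mathbb{G}$ the closure of $\mathfrak{b}(\mathbb{G})$ is $\mathcal{L}(\mathbb{G})$. Here, however, the grid is the ``shifted'' grid of Figure \ref{fig:half-grid-pair-with-shift}, with $\mathbb{H}_{\mathcal{I}}$ placed over $\mathcal{E}(\mathcal{I})$ and $-\overline{\mathbb{H}_{\mathcal{J}}}$ placed over $\mathcal{E}(\mathcal{J})$. So I would argue that the shifted picture is planar isotopic to the standard grid diagram $(\mathbb{H}_{\mathcal{I}}, \mathbb{H}_{\mathcal{J}})$. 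Since $g$ is increasing, the column order of $\mathcal{E}(\mathcal{I})$ matches that of $\mathcal{E}(\mathcal{J})$ via $p_x$, and compatibility is exactly what is needed for the vertical segments to join. The vertical strands between the two halves are therefore monotone, and straightening them is a planar isotopy that does not change any crossings. Applying the grid-to-braid operation before or after this isotopy gives braids with the same closure. Hence $\mathfrak{cl}(\iota(g)) = [\mathcal{L}(\mathbb{H}_{\mathcal{I}}, \mathbb{H}_{\mathcal{J}})]$.

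The second step applies Theorem \ref{thm: equivalent oriented links}, which gives $\mathcal{L}(\mathbb{H}_{\mathcal{I}}, \mathbb{H}_{\mathcal{J}}) = L(\mathcal{I},\mathcal{J})$. By the discussion preceding Eq. \ref{equ: normalized Kauffman for Thompson}, $L(\mathcal{I},\mathcal{J})$ is a representative of $L(g) = \langle \pi_L(g)(\xi),\xi\rangle \in C_0$. Combining the two steps yields $L(g) = \mathfrak{cl}(\iota(g))$.

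The main obstacle is the first step, namely checking carefully that the closure of the dyadic braid $[P,\alpha,g]$ really is $\mathcal{L}$ of the unshifted grid. Three points need verification. The grid-to-braid rerouting must commute with the shift: the new downward strands pass under all horizontal segments, and the endpoints on $\mathcal{E}_+(\mathcal{J})$ and $g^{-1}(\mathcal{E}_+(\mathcal{J})) = \mathcal{E}_+(\mathcal{I})$ must be matched in order by the closure. Orientations are not an issue, since $C_0$ records unoriented classes, or oriented classes which Theorem \ref{thm: equivalent oriented links} handles anyway. I would settle these points by a direct picture argument in the style of Figure \ref{fig:half-grid-braid-with-shift}. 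The key observation is that $g$ restricted to $\mathcal{E}_\pm(\mathcal{I})$ is an order- and sign-preserving bijection onto $\mathcal{E}_\pm(\mathcal{J})$, so the shifted strands introduce no crossings.
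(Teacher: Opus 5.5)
Your proposal is correct and follows the paper's proof. As there, you compute $\iota(g)$ from a representing pair $(\mathcal{I},\mathcal{J})$, identify $\mathfrak{cl}(\iota(g))$ with the class of the closure of $\mathfrak{b}(\mathbb{H}_{\mathcal{I}}, \mathbb{H}_{\mathcal{J}})$, which is $\mathcal{L}(\mathbb{H}_{\mathcal{I}}, \mathbb{H}_{\mathcal{J}})$, and conclude with Theorem~\ref{thm: equivalent oriented links}. Your extra isotopy argument for the shifted picture is harmless but unnecessary: $\alpha$ is by definition the braid of the unshifted grid, and the shift by $g$ only relabels endpoints, which the closure forgets.
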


\begin{proof}
    For any element $g \in \vec{F}$, let $(\mathcal{I}, \mathcal{J})$ be a pair of s.d. partitions representing $g$. Then
    $$
    \mathfrak{cl}(\iota(g)) = \mathfrak{cl}[\mathcal{E}_+(\mathcal{J}), \mathfrak{b}(\mathbb{H}_{\mathcal{I}}, \mathbb{H}_{\mathcal{J}}), g] = [\textup{cl}(\mathfrak{b}(\mathbb{H}_{\mathcal{I}}, \mathbb{H}_{\mathcal{J}}))] = [L(\mathbb{H}_{\mathcal{I}}, \mathbb{H}_{\mathcal{J}})]
    $$

    In the equation above we take classes of links in $C_0$. By Theorem \ref{thm: equivalent oriented links}, we have $[L(\mathbb{H}_{\mathcal{I}}, \mathbb{H}_{\mathcal{J}})] = [L(\mathcal{I},\mathcal{J})]=L(g)$. Thus $L(g)=\mathfrak{cl}(\iota(g))$.
\end{proof}

\begin{theorem} \label{thm: iota injective homomorphism}
    $\iota: \vec{F} \to B_{\vec{F}}$ is an injective group homomorphism.
\end{theorem}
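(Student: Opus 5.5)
Injectivity is the easy half, so I would dispose of it first. By the identification $B_{\vec{F}} \cong \lim_{\to} Br \rtimes_\phi \vec{F}$ (the proposition in Section \ref{sec:BF-as-an-F-equivariant-direct-limit}), there is a well-defined projection $\pi: B_{\vec{F}} \to \vec{F}$, $[P,\alpha,f]\mapsto f$. This map is well defined because the concentration relation of Definition \ref{def: concentration on S_G} forces $f=g$, and it is a homomorphism because composition in $\mathcal{S}_G$ multiplies the $G$-components. By construction $\iota(g)=[P,\alpha,g]$, so $\pi\circ\iota=\mathrm{id}_{\vec{F}}$. Hence $\iota$ is injective as soon as it is a homomorphism; in fact $\iota$ is then a section of the semidirect product.

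For the homomorphism property, take $g,h\in\vec{F}$. I would choose s.d. partitions $\mathcal{I},\mathcal{J},\mathcal{K}$ such that $(\mathcal{J},\mathcal{K})$ represents $h$ and $(\mathcal{I},\mathcal{J})$ represents $g$. This is possible by the standard common-refinement argument: first pick a representative of $g$, then refine a representative of $h$ so that its top partition matches the bottom partition for $g$, refining the other pair accordingly. By Lemma \ref{lem: iota is well defined}, any choice of representatives may be used. Then $(\mathcal{I},\mathcal{K})$ represents the composite, and
\[
\iota(g)=[\mathcal{E}_+(\mathcal{J}),\mathfrak{b}(\mathbb{H}_\mathcal{I},\mathbb{H}_\mathcal{J}),g],\quad \iota(h)=[\mathcal{E}_+(\mathcal{K}),\mathfrak{b}(\mathbb{H}_\mathcal{J},\mathbb{H}_\mathcal{K}),h].
\]
Here the order convention must be fixed so that the target of one representative is the source of the other. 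I would check which of $gh$ or $hg$ corresponds to stacking with the middle partition $\mathcal{J}$, using that braids are drawn top to bottom and that the top endpoints of $[P,\alpha,f]$ lie on $f^{-1}(P)$. The endpoint sets then match: the top of one braid is $\mathcal{E}_+(\mathcal{J})$, up to the shift, and so is the bottom of the other. The product in $B_{\vec{F}}$ is therefore the concatenation $\mathfrak{b}(\mathbb{H}_\mathcal{I},\mathbb{H}_\mathcal{J})\cdot\mathfrak{b}(\mathbb{H}_\mathcal{J},\mathbb{H}_\mathcal{K})$ with the composite shift.

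The key step, and the main obstacle, is the braid identity
\[
\mathfrak{b}(\mathbb{H}_\mathcal{I},\mathbb{H}_\mathcal{J})\cdot\mathfrak{b}(\mathbb{H}_\mathcal{J},\mathbb{H}_\mathcal{K})=\mathfrak{b}(\mathbb{H}_\mathcal{I},\mathbb{H}_\mathcal{K})
\]
as braids on the endpoint set $\mathcal{E}_+$, after the appropriate shifts. To prove it, I would read off the structure of $\mathfrak{b}(\mathbb{H}_+,\mathbb{H}_-)$ from Figure \ref{fig:half-grid-braid-with-shift}. The braid factors as a top piece $\beta_\mathcal{I}$, depending only on $\mathbb{H}_\mathcal{I}$, going from $\mathcal{E}_+(\mathcal{I})$ to the intermediate level $\mathcal{E}_-(\mathcal{I})$; a trivial shift from $\mathcal{E}_-(\mathcal{I})$ to $\mathcal{E}_-(\mathcal{J})$; and a bottom piece $\beta_\mathcal{J}'$, depending only on $\mathbb{H}_\mathcal{J}$, going from $\mathcal{E}_-(\mathcal{J})$ to $\mathcal{E}_+(\mathcal{J})$. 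The bottom piece should be the inverse of the top piece, that is, $\beta'_\mathcal{J}=\beta_\mathcal{J}^{-1}$ up to the same shift. This is plausible because $-\overline{\mathbb{H}_\mathcal{J}}$ is the mirror of $\mathbb{H}_\mathcal{J}$ with orientation reversed, and in the grid-to-braid construction vertical strands pass under all horizontal ones, so mirroring top to bottom reverses the braid word. Concretely, I would verify that the strands starting at $X$-markings of $\mathbb{H}_\mathcal{J}$ traverse the same crossings in reverse order with the same over/under data. With this factorization, $\beta_\mathcal{J}^{-1}\beta_\mathcal{J}$ cancels in the middle of the concatenation, leaving $\beta_\mathcal{I}\cdot(\text{shift})\cdot\beta_\mathcal{K}^{-1}=\mathfrak{b}(\mathbb{H}_\mathcal{I},\mathbb{H}_\mathcal{K})$. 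Proving this factorization carefully, including treating the trivial shift strands consistently with the backside-strand convention of the direct limit, is where I expect most of the work to lie.
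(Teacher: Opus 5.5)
Your proposal is correct and follows essentially the same route as the paper. For injectivity, the paper likewise uses that the concentration relation fixes the $\vec{F}$-component; your left inverse $\pi$ with $\pi\circ\iota=\mathrm{id}$ already gives injectivity of $\iota$ as a map of sets, with no homomorphism property needed. For the homomorphism property, the paper also chooses pairs $(\mathcal{I},\mathcal{J})$ and $(\mathcal{J},\mathcal{K})$ and observes that the lower half of $\mathfrak{b}(\mathbb{H}_\mathcal{I},\mathbb{H}_\mathcal{J})$ and the upper half of $\mathfrak{b}(\mathbb{H}_\mathcal{J},\mathbb{H}_\mathcal{K})$ are mirror images cancelling by Reidemeister II moves; your factorization $\beta_\mathcal{I}\cdot(\text{shift})\cdot\beta_\mathcal{J}^{-1}$ is a more explicit version of that step. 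The order convention resolves as in the paper: with $(\mathcal{I},\mathcal{J})$ representing $g$ and $(\mathcal{J},\mathcal{K})$ representing $h$, the pair $(\mathcal{I},\mathcal{K})$ represents $hg$, and one gets $\iota(h)\iota(g)=\iota(hg)$.
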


\begin{proof}
    Suppose that $\iota(g) = \iota(h)$, then $g = h$ by the definition of $\iota$ and Definition \ref{def: concentration on S_G}. Hence $\iota$ is injective.

    Next we prove that $\iota$ preserves the multiplication. Given $f, g \in \vec{F}$, to take the product of $g$ and $f$, we choose a s.d. partition pair $(\mathcal{I}, \mathcal{J})$ representing $f$, and a s.d. partition pair $(\mathcal{J}, \mathcal{K})$ representing $g$. Then $gf$ is represented by the s.d. partition pair $(\mathcal{I}, \mathcal{K})$. To show that $\iota(g) \iota(f) = \iota(gf)$, we just need to check that
    $$
    [\mathcal{E}_+(K), \mathfrak{b}(\mathbb{H}_{\mathcal{J}}, \mathbb{H}_{\mathcal{K}}), g][\mathcal{E}_+(J), \mathfrak{b}(\mathbb{H}_{\mathcal{I}}, \mathbb{H}_{\mathcal{J}}), f] = [\mathcal{E}_+(K), \mathfrak{b}(\mathbb{H}_{\mathcal{I}}, \mathbb{H}_{\mathcal{K}}), gf]
    $$

    Note that $(\mathcal{E}_+(J), \mathfrak{b}(\mathbb{H}_{\mathcal{I}}, \mathbb{H}_{\mathcal{J}}), f)$ is a morphism from $\mathcal{E}_+(\mathcal{I})$ to $\mathcal{E}_+(\mathcal{J})$, and $(\mathcal{E}_+(K), \mathfrak{b}(\mathbb{H}_{\mathcal{J}}, \mathbb{H}_{\mathcal{K}}), g)$ is a morphism from $\mathcal{E}_+(\mathcal{J})$ to $\mathcal{E}_+(\mathcal{K})$, we have
    $$
    [\mathcal{E}_+(K), \mathfrak{b}(\mathbb{H}_{\mathcal{J}}, \mathbb{H}_{\mathcal{K}}), g][\mathcal{E}_+(J), \mathfrak{b}(\mathbb{H}_{\mathcal{I}}, \mathbb{H}_{\mathcal{J}}), f] = [\mathcal{E}_+(K), \mathfrak{b}(\mathbb{H}_{\mathcal{J}}, \mathbb{H}_{\mathcal{K}}) \mathfrak{b}(\mathbb{H}_{\mathcal{I}}, \mathbb{H}_{\mathcal{J}}), gf]
    $$
    
    It remains to check that $\mathfrak{b}(\mathbb{H}_{\mathcal{J}}, \mathbb{H}_{\mathcal{K}}) \mathfrak{b}(\mathbb{H}_{\mathcal{I}}, \mathbb{H}_{\mathcal{J}}) = \mathfrak{b}(\mathbb{H}_{\mathcal{I}}, \mathbb{H}_{\mathcal{K}})$. Indeed, after we turn $(\mathbb{H}_{\mathcal{J}}, \mathbb{H}_{\mathcal{K}})$ and $(\mathbb{H}_{\mathcal{I}}, \mathbb{H}_{\mathcal{J}})$ into the braids and concatenate them, the lower half of $\mathfrak{b}(\mathbb{H}_{\mathcal{I}}, \mathbb{H}_{\mathcal{J}})$ and the upper half of $\mathfrak{b}(\mathbb{H}_{\mathcal{J}}, \mathbb{H}_{\mathcal{K}})$ become mirrors to each other, and canceled by Reidemeister II moves (see Figure \ref{fig:half-grids-multiplication} for an example). Thus $\mathfrak{b}(\mathbb{H}_{\mathcal{J}}, \mathbb{H}_{\mathcal{K}}) \mathfrak{b}(\mathbb{H}_{\mathcal{I}}, \mathbb{H}_{\mathcal{J}}) = \mathfrak{b}(\mathbb{H}_{\mathcal{I}}, \mathbb{H}_{\mathcal{K}})$.
\end{proof}

\begin{figure}
    \centering
    \includegraphics[width=\linewidth]{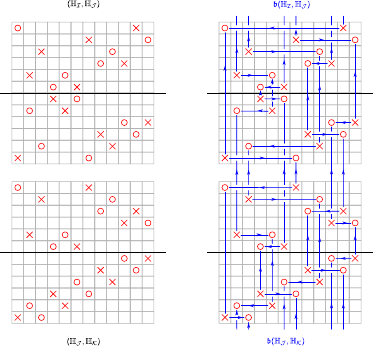}
    \caption{When we take the product of $\mathfrak{b}(\mathbb{H}_\mathcal{J}, \mathbb{H}_\mathcal{K})$ and $\mathfrak{b}(\mathbb{H}_\mathcal{I}, \mathbb{H}_\mathcal{J})$, the lower half of $\mathfrak{b}(\mathbb{H}_\mathcal{I}, \mathbb{H}_\mathcal{J})$ and the upper half of $\mathfrak{b}(\mathbb{H}_\mathcal{J}, \mathbb{H}_\mathcal{K})$ become mirrors to each other. Then they can be canceled out by some Reidemeister II moves.}
    \label{fig:half-grids-multiplication}
\end{figure}

Theorem \ref{thm: embedding-preserves-link} follows by combining Lemma \ref{lem: iota is well defined}, \ref{lem: iota preserves link} and Theorem \ref{thm: iota injective homomorphism}.

\section{Unifying the two representations}

\subsection{Extending braid representations to dyadic braids} \label{sec:extending-braid-representations-to-dyadic-braids}

Recall that $S^{\frac{1}{2}}$ is the directed set of finite subsets of dyadic rational numbers, with partial order subset inclusion $\subseteq$. Let $\mathcal{S}^{\frac{1}{2}}$ be its associated direct category. Now define a functor $TL: \mathcal{S}^{\frac{1}{2}} \to \mathcal{M}on$ as follows.

\begin{itemize}
    \item For any object $P$, let $TL(P) = TL_{|P|}$, the Temperley-Lieb algebra with $|P|-1$ canonical generators $e_1, e_2, ..., e_{|P|-1}$.
    
    \item For any pair $P \subseteq Q$, defining the map $TL(\iota_P^Q): TL_{|P|} \to TL_{|Q|}$ requires more care. We first define it on the generators when $|Q|=|P|+1$.
\end{itemize}

Recall the generator $e_i \in TL_n$ can be described as an $(n,n)$-planar tangle with an upper turnback and a lower turnback both between the $i^{th}$ point and the $(i+1)^{th}$ point, and trivial strands elsewhere. We will view the generator $e_i$ in $TL(P)$ as this diagram but with upper endpoints and lower endpoints both on $P$.

Suppose that $Q$ is obtained by adding a single point between the $k^{th}$ and the $(k+1)^{th}$ points of $P$. We denote the generators of $TL_{|P|}$ by $e_1, e_2, ..., e_{|P|-1}$, and denote the generators of $TL_{|P|+1}$ by $e'_1, e'_2, ..., e'_{|P|}$. Then define $TL(\iota_P^Q): TL_{|P|} \to TL_{|P|+1}$ on the generators as follows.

$$
TL(\iota_P^Q)(e_i)=
\begin{cases}
    e'_i &  i < k \\
    A^2 e'_i e'_{i+1} + e'_i + e'_{i+1} + A^{-2}e'_{i+1} e'_i & i = k\\
    e'_{i+1} & i > k
\end{cases}
$$

Alternatively, $TL(\iota_P^Q)(e_i)$ is the Kauffman bracket of the tangle obtained by adding a backside trivial strand to $e_i$ between the $k^{th}$ point and the $(k+1)^{th}$ point. When $i<k$ or $i>k$, adding a trivial strand doesn't introduce any crossings. When $i = k$, adding a trivial strand will introduce two crossings. Then we use the Kauffman relation to resolve these two crossings as Figure \ref{fig:kauffman-bracket-of-adding-trivial-strand} shows.

\begin{figure}
    \centering
    \includegraphics[width=\linewidth]{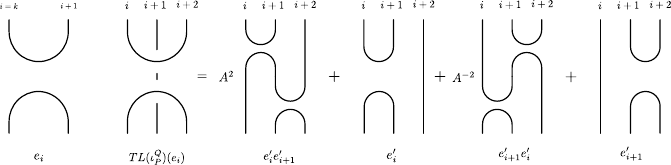}
    \caption{When $i=k$, adding a trivial strand to $e_i$ introduces two crossings. We define $TL(\iota_P^Q)(e_i)$ to be the Kauffman bracket of the resulting tangle.}
    \label{fig:kauffman-bracket-of-adding-trivial-strand}
\end{figure}

Then we extend $TL(\iota_P^Q)$ to $TL_{|P|}$ as an algebra homomorphism.

\begin{lemma}
     $TL(\iota_P^Q): TL_{|P|} \to TL_{|P|+1}$ is a well-defined map.
\end{lemma}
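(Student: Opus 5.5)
To prove that $TL(\iota_P^Q)$ is well defined, the most economical plan is to avoid checking the defining relations of $TL_{|P|}$ on the generator images by brute force. Instead, I would realize $TL(\iota_P^Q)$ as a composite of maps that are manifestly well defined, and then check that this composite agrees with the prescribed formula on the generators $e_i$.

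\textbf{The diagrammatic map.} Let $\Phi: TL_{|P|} \to TL_{|P|+1}$ be the $\mathbb{Z}[A,A^{-1}]$-linear map defined on the diagram basis as follows. For an $(n,n)$-planar tangle $D$ with endpoints on $P$, insert a vertical strand at the new point of $Q\setminus P$, passing \emph{behind} $D$. This gives an $(n+1,n+1)$-tangle diagram with crossings, and we set $\Phi(D)$ to be its Kauffman bracket, i.e.\ we resolve all crossings by the Kauffman relation and replace each circle by $-A^2-A^{-2}$.

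This is well defined on $TL_{|P|}$ for two reasons.
\begin{itemize}
\item The Kauffman bracket is invariant under regular isotopy (Reidemeister II and III). So the result does not depend on how the back strand is drawn relative to $D$; any two placements behind $D$ differ by such moves, since the strand lies entirely underneath.
\item A circle of $D$ that gets crossed by the back strand is still removable. By Reidemeister II moves the strand can be pulled off the circle, so the circle contributes the factor $-A^2-A^{-2}$ exactly as in $TL_{|P|}$.
\end{itemize}

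\textbf{Multiplicativity.} The map $\Phi$ is multiplicative. Placing a back strand through the stack $D_1D_2$ is the same as stacking $D_1$ with a back strand over $D_2$ with a back strand. The Kauffman bracket of a stacked tangle is the product of the brackets, because the state sum factors over crossings. It is also unital, since $\Phi(id_n)=id_{n+1}$. Hence $\Phi$ is an algebra homomorphism.

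\textbf{Agreement with the stated formula.} It remains to check that $\Phi$ matches the formula on generators. For $i\neq k$, the strand can be placed so that it avoids the turnbacks of $e_i$. This introduces no crossings and gives $e'_i$ or $e'_{i+1}$. For $i=k$, the strand crosses the upper and lower turnbacks once each. Resolving these two crossings produces four states with coefficients $A^{\pm1}A^{\pm1}$, and these are exactly the four terms $A^2e'_ke'_{k+1}$, $e'_k$, $e'_{k+1}$ and $A^{-2}e'_{k+1}e'_k$ (Figure \ref{fig:kauffman-bracket-of-adding-trivial-strand}). Since $\Phi$ is a homomorphism agreeing with $TL(\iota_P^Q)$ on generators, the latter's homomorphic extension exists and equals $\Phi$.

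\textbf{Expected obstacle.} The main obstacle is the $i=k$ computation. One has to verify that the four smoothings really give the words listed, with the correct assignment of $A^2$ versus $A^{-2}$, consistent with the over/under convention (the added strand is behind) and the Kauffman relation of Figure \ref{fig:kauffman-relation}. If the sign conventions turned out mismatched, the fallback is to verify the relations directly on the images:
\[
E^2=(-A^2-A^{-2})E,\qquad Ee'_{k-1}E=E,\qquad e'_{k+2}Ee'_{k+2}=e'_{k+2},
\]
where $E$ is the image of $e_k$, together with the commutation relations. These are finite computations in $TL_{|P|+1}$, and they already follow from the isotopy invariance used above.
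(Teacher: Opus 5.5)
Your proof is correct, but it is organized differently from the paper's.

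The paper works through the generators-and-relations presentation of $TL_{|P|}$. It checks the four defining relations on the images of the $e_i$, dismisses the crossing-free cases, and treats each case involving $e_k$ separately (e.g.\ $k=i$ or $k=i+1$ for $e_ie_{i+1}e_i=e_i$). In each such case it reads the images as Kauffman brackets of diagrams with a back strand and invokes regular isotopy and Reidemeister II invariance.

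You instead construct the homomorphism globally. $\Phi$ is defined on the diagram basis, its well-definedness and multiplicativity follow once and for all from RII invariance and the factorization of the state sum, and only then is $\Phi(e_i)$ matched with the formula. This buys you three things:
\begin{enumerate}
\item You avoid case analysis.
\item You do not need the presentation of $TL_n$ to be complete. You only need that the $e_i$ generate, in order to identify $TL(\iota_P^Q)$ with $\Phi$.
\item You get the stronger conclusion that $TL(\iota_P^Q)$ is ``add a back strand and take the bracket'' on \emph{every} element of $TL_{|P|}$. The paper uses exactly this tacitly right afterwards: for independence of the chain $P=P_0\subset\cdots\subset P_n=Q$, for functoriality, and for naturality of $\tau$.
\end{enumerate}
The paper's relation checks also silently use the multiplicativity of the bracket that you make explicit. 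Its route is more pictorial, but it amounts to special instances of your argument.

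As for your anticipated obstacle, which of $e'_ke'_{k+1}$ and $e'_{k+1}e'_k$ carries $A^{2}$ depends on the stacking and smoothing conventions. With the standard rule, $xy$ meaning $x$ stacked above $y$, and the strand underneath, the all-$A$ state joins top point $k+2$ to bottom point $k$, so it gives $e'_ke'_{k+1}$. No fallback is needed in any case: the mirrored formula is the bracket of inserting the strand in \emph{front}, which is a homomorphism by the identical argument.
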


\begin{proof}
    We just need to check that $TL(\iota_P^Q)(e_i)$ preserves the relations on $TL_{|P|}$ and $TL_{|P|+1}$. More precisely, we need to check
    \begin{enumerate}
        \item $TL(\iota_P^Q)(e_i^2) = (-A^2-A^{-2})TL(\iota_P^Q)(e_i)$.
        \item $TL(\iota_P^Q)(e_i e_{i+1} e_i) = TL(\iota_P^Q)(e_i)$.
        \item $TL(\iota_P^Q)(e_i e_{i-1} e_i)= TL(\iota_P^Q)(e_i)$.
        \item $TL(\iota_P^Q)(e_i e_j)  = TL(\iota_P^Q)(e_j e_i)$ for $|i-j| \geq 2$.
    \end{enumerate}
    
    If $TL(\iota_P^Q)$ doesn't introduce crossings, the above equations clearly hold. Otherwise, they follow from the Kauffman bracket interpretation of $TL(\iota_P^Q)$, combined with the fact that the Kauffman bracket is an invariant under regular isotopy and Reidemeister II moves (see Figure \ref{fig:relations-with-trivial-strand}).

    \begin{figure}
        \begin{subfigure}{0.35\textwidth}
            \centering
            \includegraphics[width=\textwidth]{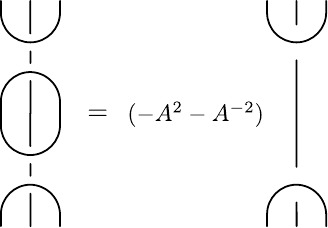}
            \caption{}
            \label{fig:relation-1}
        \end{subfigure}
        \vspace{0.5cm}
        \vfill
        \begin{subfigure}{0.3\textwidth}
            \centering
            \includegraphics[width=\textwidth]{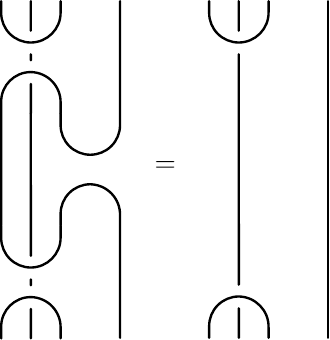}
            \caption{}
            \label{fig:relation-2-1}
        \end{subfigure}
        \hspace{3cm}
        \begin{subfigure}{0.3\textwidth}
            \centering
            \includegraphics[width=\textwidth]{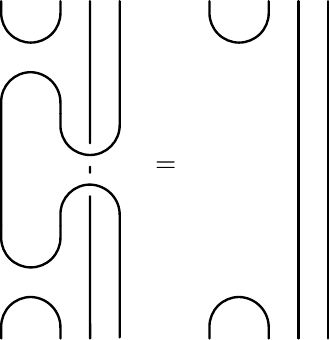}
            \caption{}
            \label{fig:relation-2-2}
        \end{subfigure}
        \vspace{0.5cm}
        \vfill
        \begin{subfigure}{0.3\textwidth}
            \centering
            \includegraphics[width=\textwidth]{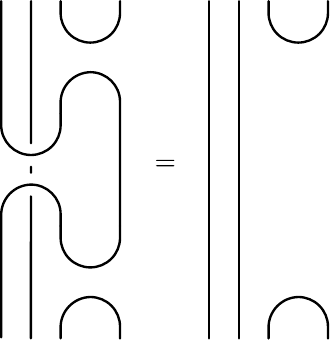}
            \caption{}
            \label{fig:relation-3-1}
        \end{subfigure}
        \hspace{3cm}
        \begin{subfigure}{0.3\textwidth}
            \centering
            \includegraphics[width=\textwidth]{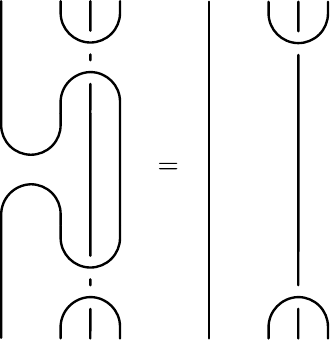}
            \caption{}
            \label{fig:relation-3-2}
        \end{subfigure}
        \vspace{0.5cm}
        \vfill
        \begin{subfigure}{0.4\textwidth}
            \centering
            \includegraphics[width=\textwidth]{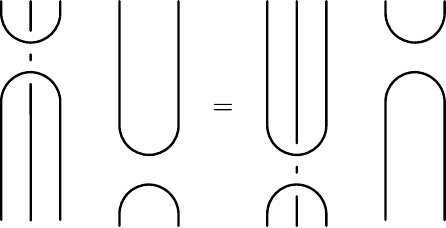}
            \caption{}
            \label{fig:relation-4-1}
        \end{subfigure}
        \hfill
        \begin{subfigure}{0.4\textwidth}
            \centering
            \includegraphics[width=\textwidth]{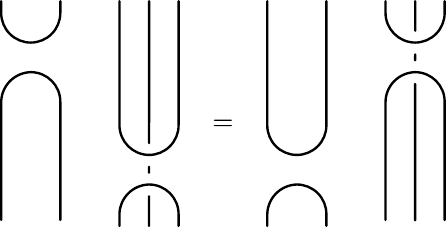}
            \caption{}
            \label{fig:relation-4-2}
        \end{subfigure}
        \caption{(A) illustrates the first relation with $k=i$. (B) and (C) correspond to the two cases of the second relation: $k=i$ or $k=i+1$. (D) and (E) correspond to the two cases of the third relation: $k=i-1$ or $k=i$. (F) and (G) correspond to the two cases of the fourth relation: $k=i$ or $k=j$.}
        \label{fig:relations-with-trivial-strand}
    \end{figure}
\end{proof}

In general, suppose that $Q$ is obtained by adding $n$ points to $P$. Then we consider a sequence $P=P_0, P_1, ..., P_n=Q$ such that $|P_{i+1}| = |P_i| + 1$. Define
$$
TL(\iota_P^Q) = TL(\iota_{P_{n-1}}^Q)  \circ  \cdots \circ TL(\iota_{P_1}^{P_2}) \circ TL(\iota_{P}^{P_1}) 
$$

Note that such a composition means adding $n$ trivial strands, then doing Kauffman brackets in the order of the given sequence. It doesn't depend on the choice of the sequence, so $TL(\iota_P^Q)$ is well defined. The same argument shows that $TL$ is indeed a functor.

Since the elements in $\vec{F}$ are order-preserving, the functor $TL$ is $\vec{F}$-equivariant. Then we can take its $\vec{F}$-equivariant direct limit.

\begin{definition}
    Define the oriented dyadic Temperley-Lieb monoid $\overline{TL_{\vec{F}}}$ to be $\displaystyle \lim_{\longrightarrow}{}^{\vec{F}} TL$, the $\vec{F}$-equivariant direct limit of $TL$.
\end{definition}

\begin{definition}
    Define the oriented dyadic Temperley-Lieb algebra $TL_{\vec{F}}$ to be the free $\mathbb{Z}$-module generated by the elements in $\overline{TL_{\vec{F}}}$, with the following linearity relation:
    $$
    a[P, \alpha, f] + b[P, \beta, f] \sim [P, a\alpha + b\beta, f]
    $$
    for any $[P, \alpha, f]$ and $[P, \beta, f]$ in $\overline{TL_{\vec{F}}}$, and any $a, b \in \mathbb{Z}$. The multiplication on $TL_{\vec{F}}$ is induced by the multiplication in $\overline{TL_{\vec{F}}}$.
\end{definition}

We define the trace function $\mathfrak{tr}: TL_{\vec{F}} \to \mathbb{Q}(A)$, the field of rational functions in $A$, as follows. For a generator $[P, \alpha, f] \in TL_{\vec{F}}$, define
$$
\mathfrak{tr}[P, \alpha, f] = \frac{1}{(-A^2-A^{-2})^{|P|}}tr(\alpha)$$
the normalized trace of $\alpha \in TL_{|P|}$  in the usual sense. Note that $\mathfrak{tr}$ is well defined, because adding $n$ trivial strands means multiplying the trace by $(-A^2-A^{-2})^n$. The trace of a general element in $TL_{\vec{F}}$ can be defined linearly.

For a generator $[P, \alpha, f] \in TL_{\vec{F}}$, define its vertical mirror to be 
$$
[P, \alpha, f]^* = [f^{-1}(P), \alpha^*, f^{-1}]
$$
where $\alpha^*$ is the vertical mirror of $\alpha \in TL_{|P|}$. The vertical mirror operation extends to $TL_{\vec{F}}$ semilinearly by sending coefficient $A$ to $A^{-1}$.

Given two elements $\theta, \gamma \in TL_{\vec{F}}$, define their inner product to be
$$
\langle \theta, \gamma \rangle = \mathfrak{tr}(\gamma^*\theta)
$$

\vspace{5mm}

Note that both the oriented dyadic braid group $B_{\vec{F}}$ and the oriented dyadic Temperley-Lieb monoid $\overline{TL_{\vec{F}}}$ are obtained as $\vec{F}$-equivariant direct limits. We therefore use Proposition \ref{prop:S-functor-induces-SG-functor} to define a map from $B_{\vec{F}}$ to $\overline{TL_{\vec{F}}}$ that generalizes Jones representation $J_n: Br_n \to TL_n$.

Consider the direct category $\mathcal{S}^{\frac{1}{2}}$ with $\vec{F}$-action on it, and two $\vec{F}$-equivariant functors $Br: \mathcal{S}^{\frac{1}{2}} \to \mathcal{M}on$ and $TL: \mathcal{S}^{\frac{1}{2}} \to \mathcal{M}on$ defined in previous sections. We define a natural transformation $\tau$ from $Br$ to $TL$ as follows. For any object $P$ in $\mathcal{S}^{\frac{1}{2}}$, define
$$
\tau_P: Br(P) \to TL(P)
$$
to be $J_{|P|}: Br_{|P|} \to TL_{|P|}$.

\begin{lemma}
    $\tau$ is an $\vec{F}$-equivariant natural transformation.
\end{lemma}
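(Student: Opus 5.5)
We need to verify three things: each $\tau_P = J_{|P|}$ is a monoid homomorphism; the naturality square commutes for every inclusion $P \subseteq Q$; and $\tau$ is $\vec{F}$-equivariant. The first holds because $J_n$ is by definition a monoid homomorphism $Br_n \to TL_n$. The third is immediate: $f \in \vec{F}$ acts trivially on $\mathcal{M}on$, and $|f(P)| = |P|$, so $f(\tau_P) = \tau_P = J_{|P|} = \tau_{f(P)}$. The real content is naturality,
$$
TL(\iota_P^Q) \circ J_{|P|} = J_{|Q|} \circ Br(\iota_P^Q).
$$

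Both $TL(\iota_P^Q)$ and $Br(\iota_P^Q)$ are defined, or factor, as composites along a chain $P = P_0 \subset P_1 \subset \cdots \subset P_n = Q$ of one-point extensions. Naturality squares paste, so it suffices to treat $|Q| = |P|+1$, with the new point inserted between the $k^{th}$ and $(k+1)^{th}$ points of $P$. Both sides are monoid homomorphisms out of $Br_{|P|}$, so it is enough to check equality on the Artin generators $\sigma_i$; inverses are then automatic, since homomorphisms out of a group agree on $\sigma_i^{-1}$ once they agree on $\sigma_i$.

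For $i \neq k$, adding a backside strand at the $k$-th gap sends $\sigma_i$ to $\sigma_i$ or $\sigma_{i+1}$ in $Br_{|P|+1}$. Since $TL(\iota_P^Q)$ fixes the identity and sends $e_i$ to $e'_i$ or $e'_{i+1}$ correspondingly, both sides give $A\,id + A^{-1}e'$ with matching index.

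The main obstacle is $i = k$. Here $Br(\iota_P^Q)(\sigma_k)$ is the crossing $\sigma_k$ with a strand passing behind it, which as an Artin word is $\sigma_k\sigma_{k+1}\sigma_k^{-1}$ or a similar three-letter conjugate; its $J$-image is the Kauffman bracket of that three-crossing diagram. On the other side we get $A\,id' + A^{-1}TL(\iota_P^Q)(e_k)$, and $TL(\iota_P^Q)(e_k)$ was defined as the Kauffman bracket of $e_k$ with a backside strand. I would avoid expanding $(A + A^{-1}e'_k)(A + A^{-1}e'_{k+1})(A^{-1} + Ae'_k)$ by hand and argue diagrammatically instead. $J_n(b)$ equals the Kauffman bracket of the braid diagram $b$, and the bracket is multiplicative under stacking and invariant under Reidemeister II and III. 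Resolving the front crossing of $\sigma_k$ first, with the back strand still present, gives $A$ times the identity tangle with a back strand plus $A^{-1}$ times $e_k$ with a back strand. The first term is an isotopic trivial braid, whose bracket is $id'$. The second term has bracket $TL(\iota_P^Q)(e_k)$ by the definition of that map, namely the formula $A^2 e'_k e'_{k+1} + e'_k + e'_{k+1} + A^{-2}e'_{k+1}e'_k$. Since the Kauffman bracket of a diagram is independent of the order in which crossings are resolved, the two sides agree.

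To make the diagrammatic step rigorous, I would check, via Reidemeister II and III invariance, that the diagram "$\sigma_k$ with a back strand" is regularly isotopic to the Artin word $\sigma_k\sigma_{k+1}\sigma_k^{-1}$, whose $J_{|Q|}$-image is the product of brackets. This is the same fact already used in proving that $TL(\iota_P^Q)$ is well defined.
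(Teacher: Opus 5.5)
Your proof is correct and follows the paper's argument. Naturality comes from the fact that the Kauffman bracket does not depend on the order in which crossings are resolved, and equivariance is the observation $\tau_P = J_{|P|} = J_{|f(P)|} = \tau_{f(P)}$. Your reductions to one-point extensions and to Artin generators sharpen the paper's one-line argument: you only ever use the bracket description of $TL(\iota_P^Q)$ on $e_k$, where it holds by definition, whereas the paper implicitly uses it on arbitrary Temperley--Lieb diagrams.
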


\begin{proof}
    We first check that $\tau$ is indeed a natural transformation, that is, we check the following diagram commutes for any $P \leq Q$.
    \[\begin{tikzcd}
    	{Br_{|Q|}} & {TL_{|Q|}} \\
    	{Br_{|P|}} & {TL_{|P|}}
    	\arrow["{J_{|Q|}}", from=1-1, to=1-2]
    	\arrow["{Br(\iota_P^Q)}", from=2-1, to=1-1]
    	\arrow["{J_{|P|}}"', from=2-1, to=2-2]
    	\arrow["{TL(\iota_P^Q)}"', from=2-2, to=1-2]
    \end{tikzcd}\]
    
    For any $\alpha \in Br_{|P|}$, $(J_{|Q|} \circ Br(\iota_P^Q))(\alpha)$ is obtained by first adding trivial strands to $\alpha$ on $Q \setminus P$, then taking its Kauffman bracket. $(TL(\iota_P^Q) \circ J_{|P|})(\alpha)$ is obtained by first taking the Kauffman bracket of $\alpha$, then adding trivial strands on $Q \setminus P$, then taking Kauffman brackets again. Since taking Kauffman brackets doesn't depend on the order of crossings, the above diagram commutes.

    $\tau$ is $\vec{F}$-equivariant, because $\tau_P = J_{|P|} = J_{|g(P)|} = \tau_{g(P)}$ for any object $P$ and any $g\in \vec{F}$.
\end{proof}

By considering the identity functor $id: \mathcal{S}^{\frac{1}{2}} \to \mathcal{S}^{\frac{1}{2}}$, and the $\vec{F}$-equivariant natural transformation $\tau$ from $Br$ to $TL \circ id$, Corollary \ref{cor:S-functor-induces-equivariant-direct-limit-homomorphism} tells us that $id$ and $\tau$ induce a monoid homomorphism $id_\tau: \displaystyle \lim_{\longrightarrow}{}^{\vec{F}} Br = B_{\vec{F}} \to \displaystyle \lim_{\longrightarrow}{}^{\vec{F}} TL = \overline{TL_{\vec{F}}}$.

Let $i: \overline{TL_{\vec{F}}} \hookrightarrow TL_{\vec{F}}$ be the inclusion.

\begin{definition} \label{def:jones-dyadic-braid-representation}
    Define $J_{\vec{F}}: B_{\vec{F}} \to TL_{\vec{F}}$ to be $i \circ id_\tau$. We say $J_{\vec{F}}$ is the Jones representation of $B_{\vec{F}}$.
\end{definition}

More specifically, $J_{\vec{F}}$ sends $b = [P, \alpha, f] \in B_{\vec{F}}$ to $[P, J_{|P|}(\alpha), f] \in TL_{\vec{F}}$, where $J_{|P|}: Br_{|P|} \to TL_{|P|}$ is the Jones representation of the $|P|^{th}$ braid group. 

Recall that each braid group $Br_n$ can be embedded in $B_{\vec{F}}$ via $\alpha \mapsto [P, \alpha, e]$, where $P$ is any set of $n$ dyadic points. Each Temperley-Lieb algebra $TL_n$ can be embedded in $TL_{\vec{F}}$ similarly. In this sense, we have $J_{\vec{F}}|_{Br_n} = J_n: Br_n \to TL_n$.

Furthermore, we have 
\begin{align}
    \mathfrak{tr}(J_{\vec{F}}(b)) &= \mathfrak{tr}[P, J_{|P|}(\alpha), f] \nonumber \\
    &= \frac{1}{(-A^2-A^{-2})^{|P|}}tr(J_{|P|}(\alpha)) \nonumber \\
    &= \frac{1}{(-A^2-A^{-2})^{|P|}} \langle cl(\alpha) \rangle(A) \label{eq: normalized Kauffman for B_F}
\end{align}

Let $\epsilon$ be the unit element in $TL_{\vec{F}}$, then $\langle J_{\vec{F}}(b)(\epsilon), \epsilon \rangle = \mathfrak{tr}(J_{\vec{F}}(b))$, which gives us the normalized Kauffman bracket of $\mathfrak{cl}(b) $ producing Eq. \ref{eq:dyadic-braid-rep-to-kauffman-bracket}.

\begin{theorem} \label{thm:jones-dyadic-braid-representation-is-unitary}
    $J_{\vec{F}}$ is a unitary representation.
\end{theorem}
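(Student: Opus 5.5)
To prove Theorem \ref{thm:jones-dyadic-braid-representation-is-unitary}, I would interpret ``unitary'' in terms of the structures just defined. The representation acts on $TL_{\vec{F}}$ by left multiplication, $J_{\vec{F}}(b)(\theta) = J_{\vec{F}}(b)\,\theta$, and the inner product is $\langle \theta,\gamma\rangle = \mathfrak{tr}(\gamma^*\theta)$. Unitarity then means
$$
\langle J_{\vec{F}}(b)\theta, J_{\vec{F}}(b)\gamma\rangle = \langle \theta,\gamma\rangle
$$
for all $b\in B_{\vec{F}}$ and all $\theta,\gamma\in TL_{\vec{F}}$. Unwinding the definition, the left side is $\mathfrak{tr}(\gamma^* J_{\vec{F}}(b)^* J_{\vec{F}}(b)\theta)$. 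So it suffices to prove two facts: the mirror operation $*$ is an antihomomorphism on $TL_{\vec{F}}$, and $J_{\vec{F}}(b)^* J_{\vec{F}}(b) = \epsilon$.

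\textbf{Step 1: $*$ is well defined and antimultiplicative.} Well-definedness needs that $*$ respects the $\vec{F}$-equivariant concentration. The representatives $[P,\alpha,f]$ and $[Q,TL(\iota_P^Q)\alpha,f]$ must have equivalent mirrors. The mirror of the first is $[f^{-1}(P),\alpha^*,f^{-1}]$, which after a change of representative becomes $[P, f(\alpha^*)=\alpha^*, f^{-1}]$ under the equivariant identification. One then needs $TL(\iota_P^Q)(\alpha)^* = TL(\iota_P^Q)(\alpha^*)$ up to the shift by $f$.

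I expect this compatibility to be the main obstacle. I would check it on generators: the mirror of $A^2 e'_ke'_{k+1}+e'_k+e'_{k+1}+A^{-2}e'_{k+1}e'_k$ with $A\mapsto A^{-1}$ and words reversed is again the same expression. Conceptually, the vertical mirror of a backside trivial strand is a backside trivial strand. The Kauffman bracket commutes with mirroring plus $A\mapsto A^{-1}$, because vertically mirroring a crossing while keeping front/back swaps its A- and B-smoothings.

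Antimultiplicativity $(xy)^* = y^*x^*$ follows by choosing composable representatives, since in each $TL_n$ the mirror reverses products. Semilinearity is compatible with the linearity relation $a[P,\alpha,f]+b[P,\beta,f]\sim[P,a\alpha+b\beta,f]$.

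\textbf{Step 2: $J_{\vec{F}}(b)^*=J_{\vec{F}}(b^{-1})$.} Write $b=[P,\alpha,f]$, so that $b^{-1} = [f^{-1}(P),\alpha^{-1},f^{-1}]$. It then suffices to show $J_{|P|}(\alpha)^* = J_{|P|}(\alpha^{-1})$ in $TL_{|P|}$. On a generator, $J(\sigma_i)^* = A^{-1}id + A e_i = J(\sigma_i^{-1})$. The general case follows by antimultiplicativity, since $(\alpha^{-1})$ reverses the word and inverts each letter.

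Then $J_{\vec{F}}(b)^*J_{\vec{F}}(b)=J_{\vec{F}}(b^{-1}b)=J_{\vec{F}}(e)=\epsilon$, because $J_{\vec{F}}=i\circ id_\tau$ is a monoid homomorphism by Corollary \ref{cor:S-functor-induces-equivariant-direct-limit-homomorphism}. This gives $\langle J_{\vec{F}}(b)\theta,J_{\vec{F}}(b)\gamma\rangle=\mathfrak{tr}(\gamma^*\theta)=\langle\theta,\gamma\rangle$.

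\textbf{Step 3: $\mathfrak{tr}$ and the inner product are well defined on $TL_{\vec{F}}$.} This was already noted: adding a strand multiplies the trace by $-A^2-A^{-2}$, and $tr$ of the bracket of a diagram with a backside strand added equals the bracket of the closure with an extra split unknot. I would add that $\mathfrak{tr}$ is invariant under the $f$-shift, since $f$ only relabels endpoints. Finally, I would record that $J_{\vec{F}}(b)$ is invertible, with inverse $J_{\vec{F}}(b^{-1})$, so each $J_{\vec{F}}(b)$ is a surjective isometry, as unitarity requires.
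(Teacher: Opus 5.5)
Your proposal is correct and follows the paper's argument. The paper likewise writes $\langle J_{\vec{F}}(b)\theta, J_{\vec{F}}(b)\gamma\rangle=\mathfrak{tr}(\gamma^*J_{\vec{F}}(b)^*J_{\vec{F}}(b)\theta)$ and collapses $J_{\vec{F}}(b)^*J_{\vec{F}}(b)$ to the unit via the classical identity $J_{|P|}(\alpha)^*=J_{|P|}(\alpha^{-1})$; your Steps 1 and 3 add the well-definedness and antimultiplicativity of $*$ that the paper uses without comment.

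One small slip: $[f^{-1}(P),\alpha^*,f^{-1}]$ cannot be rewritten as $[P,\alpha^*,f^{-1}]$, since the latter is its conjugate by $f$ and in general a different element. You don't need that rewriting, because $\vec{F}$-equivariance of $TL$ gives $TL(\iota_{f^{-1}(P)}^{f^{-1}(C)})=TL(\iota_P^C)$, which reduces well-definedness exactly to the compatibility $TL(\iota_P^C)(\alpha^*)=TL(\iota_P^C)(\alpha)^*$ that you check on generators.
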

\begin{proof}
    Let $\theta, \gamma$ be two elements in $TL_{\vec{F}}$, and let $b = [P, \alpha, f]$ be any element in $B_{\vec{F}}$. Then we have $\langle J_{\vec{F}}(b)\theta,  J_{\vec{F}}(b)\gamma \rangle = \mathfrak{tr}(\gamma^*J_{\vec{F}}(b)^*J_{\vec{F}}(b) \theta)$. Note that
    \begin{align*}
        J_{\vec{F}}(b)^*J_{\vec{F}}(b) = [f^{-1}(P), J_{|P|}(\alpha)^*, f^{-1}][P, J_{|P|}(\alpha), f] = [P, J_{|P|}(\alpha)^*J_{|P|}(\alpha) ,e]
    \end{align*}

    Here $J_{|P|}: Br_{|P|} \to TL_{|P|}$ is the classical Jones representation of the braid group $Br_{|P|}$, so we have $J_{|P|}(\alpha)^*J_{|P|}(\alpha) = J_{|P|}(\alpha^*\alpha) = J_{|P|}(id) = id$. Thus $\langle J_{\vec{F}}(b)\theta,  J_{\vec{F}}(b)\gamma \rangle = \mathfrak{tr}(\gamma^* [P, id, e] \theta) = \mathfrak{tr}(\gamma^* \theta) = \langle \theta,  \gamma \rangle$.
\end{proof}

\subsection{Proof of Theorem \ref{thm:braid-rep-thompson-rep}}

Given $g \in \vec{F}$, let $(\mathcal{I},\mathcal{J})$ be a pair of s.d. partitions representing $g$. Recall that $\mathfrak{b}(\mathbb{H}_{\mathcal{I}}, \mathbb{H}_{\mathcal{J}})$ is the braid obtained by applying the grid-to-braid operation on $(\mathbb{H}_{\mathcal{I}}, \mathbb{H}_{\mathcal{J}})$.

\begin{lemma} \label{lem:braid-writhe-zero}
The braid $\mathfrak{b}(\mathbb{H}_{\mathcal{I}}, \mathbb{H}_{\mathcal{J}})$ has writhe $0$.
\end{lemma}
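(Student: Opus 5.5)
Since writhe is an isotopy invariant only up to Reidemeister I moves, and the grid-to-braid operation as well as the passage from the Jones construction to half grids use only planar isotopy and Reidemeister II moves, the plan is to compare writhes diagrammatically. It is cleanest to compute the writhe of $\mathfrak{b}(\mathbb{H}_{\mathcal{I}}, \mathbb{H}_{\mathcal{J}})$ directly from the grid picture by splitting its crossings into three types.

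\textbf{Crossings already present in the grid diagram.} By the crossing convention, each crossing of $\mathcal{L}(\mathbb{H}_{\mathcal{I}}, \mathbb{H}_{\mathcal{J}})$ is a horizontal segment passing over a vertical segment.

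\textbf{Crossings created by the grid-to-braid operation.} When an upward vertical segment from $O$ to $X$ is replaced by two downward strands passing under all horizontal segments, each original crossing on that vertical segment is replaced by a crossing with the reversed vertical orientation. In addition, new crossings appear where the two new vertical strands pass under horizontal segments that they previously did not meet.

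\textbf{First step: the new crossings cancel in pairs.} Each horizontal segment crossed by the new downward strands is crossed twice, once by the strand descending from $O$ and once by the strand descending to $X$, or equivalently across the closing region. The argument would follow the standard one in \cite{MR2232855}: the braid is obtained from the grid link by a planar isotopy together with Reidemeister II moves, and these preserve writhe. So $w(\mathfrak{b}(\mathbb{G})) = w(\mathcal{L}(\mathbb{G}))$ as diagrams, and it suffices to show that the grid diagram $\mathcal{L}(\mathbb{H}_{\mathcal{I}}, \mathbb{H}_{\mathcal{J}})$ has writhe $0$.

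\textbf{Second step: the writhe of the grid diagram.} Crossings occur only within a single half: horizontal segments of $\mathbb{H}_{\mathcal{I}}$ meet only vertical segments in the same columns, possibly extending into the other half. The plan is to use the symmetry of the stacked diagram $(\mathbb{H}_+, \mathbb{H}_-)$, in which the lower half is $-\overline{\mathbb{H}_-}$, the mirror image with reversed orientation. A vertical mirror (top-bottom reflection) combined with orientation reversal of the entire diagram negates every crossing sign. The diagram $\mathcal{L} = (-\overline{\mathcal{T}(\mathbb{H}_-)})\,\mathcal{T}(\mathbb{H}_+)$ therefore has
\[
w(\mathcal{L}) = w(\mathcal{T}(\mathbb{H}_{\mathcal{I}})) - w(\mathcal{T}(\mathbb{H}_{\mathcal{J}})),
\]
provided the vertical arcs connecting the two halves contribute no crossings. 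This holds because the crossings of each half lie in that half's rows. It then remains to show that $w(\mathcal{T}(\mathbb{H}_{\mathcal{I}}))$ is independent of $\mathcal{I}$.

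\textbf{Main obstacle.} The hardest step is this independence. The approach I would try first is induction on refinement: passing from $\mathcal{I}$ to a one-step refinement adds, by Figure \ref{fig:adding-trivial-unknots-to-half-grids}, a component lying entirely above the rest, so the new crossings come in pairs between the new piece and each strand it passes over. The new piece lies on one side of each such strand and its horizontal segment meets that strand's vertical line twice with opposite relative orientations, so each pair contributes $+1-1=0$. Since all s.d. partitions are refinements of the trivial one $\delta$, and $\mathcal{T}(\mathbb{H}_\delta)$ has no crossings, every $\mathcal{T}(\mathbb{H}_{\mathcal{I}})$ has writhe $0$. This gives $w=0$ for the grid link, and hence for the braid.

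If this pairing argument runs into difficulty because the added component is not literally distant in the half tangle, I would fall back on Jones' statement that $L(\mathcal{I},\mathcal{J})$ has writhe $0$ (recalled before Eq. \ref{equ: normalized Jones' polynomial for Thompson}). I would then transport it through Theorem \ref{thm: equivalent oriented links}, checking that the cornered isotopy of Figure \ref{fig: corner isotopy} introduces no kinks.
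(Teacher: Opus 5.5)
Your \emph{first step} is where the argument breaks. The grid-to-braid operation is not planar isotopy plus Reidemeister II moves, and in general $w(\mathfrak{b}(\mathbb{G}))\neq w(\mathcal{L}(\mathbb{G}))$.

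Take an upward column $c$, with $O$ in row $r_O$ below $X$ in row $r_X$. When it is replaced, its crossings with the rows strictly between $r_O$ and $r_X$ are deleted, not reoriented. New crossings appear with the rows below $r_O$ or above $r_X$ that span column $c$, and each such row is met exactly once. The closing strands cross nothing, so nothing pairs up.

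Since all braid strands point down, you can compare signs and intersect the diagram with the vertical line just to the right of column $c$. This shows that column $c$ changes the writhe by $\chi_X-\chi_O$. Here $\chi_X$ (resp. $\chi_O$) is $1$ if the $X$ (resp. $O$) in column $c$ has its row partner to its right, and $0$ otherwise.

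This correction is nonzero in general. Take the $3\times 3$ grid with:
\begin{itemize}
\item row $1$: $O$ in column $2$, $X$ in column $3$;
\item row $2$: $X$ in column $1$, $O$ in column $3$;
\item row $3$: $O$ in column $1$, $X$ in column $2$.
\end{itemize}
It is an unknot diagram with exactly one crossing, while $\mathfrak{b}(\mathbb{G})$ is a one-strand braid with no crossings.

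For the grids $(\mathbb{H}_{\mathcal{I}},\mathbb{H}_{\mathcal{J}})$ the correction does vanish, but for a reason your proposal never uses. Summing over upward columns gives $w(\mathfrak{b})-w(\mathcal{L})=N(\mathcal{I})-N(\mathcal{J})$, where $N(\mathcal{I})=\#\{a\in\mathcal{E}_+(\mathcal{I}):\overline{a}>a\}$. Conjugate points are midpoints of sibling intervals, and the left sibling's midpoint has the same weight as the parent's midpoint. Hence $N(\mathcal{I})$ is the number of positive interior breakpoints of $\mathcal{I}$. That number is read off from the sign sequence of $\mathcal{E}(\mathcal{I})$, so it agrees for $\mathcal{I}$ and $\mathcal{J}$ when $g\in\vec{F}$.

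The paper sidesteps this bookkeeping entirely. It views $\mathfrak{b}$ as the transverse push-off of the Legendrian grid link, so $w(\mathfrak{b})-n=sl=tb-rot=-n$, using $tb=-n$ and $rot=0$ from the half grid paper and the fact that the braid has $n$ strands.

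Your \emph{second step} also contains an error, though a repairable one: $w(\mathcal{T}(\mathbb{H}_{\mathcal{I}}))$ is not $0$. Splitting a leaf with midpoint $m$ adds a row joining the new midpoints $\ell'<m<r'$. Its horizontal segment crosses the arc descending from $m$ exactly once; in the half tangle that arc runs to the bottom edge, so there is no second crossing. Because $\omega(\ell')=\omega(m)$, this crossing always has the same (positive) sign. So $w(\mathcal{T}(\mathbb{H}_{\mathcal{I}}))=n-1$ for an s.d. $n$-partition.

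The difference between $\mathcal{I}$ and $\mathcal{J}$ is still $0$, so your conclusion $w(\mathcal{L})=0$ holds. But it does not transfer to the braid without the extra argument above.
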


\begin{proof}
    Recall we can view any grid diagram as a Legendrian link in the standard tight contact $S^3$. In \cite{MR2500576} and \cite[Proposition 4]{MR2602837}  we see that when turning a grid diagram into a braid contact geometrically, we are turning a Legendrian link into its transverse push-off. 
    
    Let $L(\mathbb{H}_{\mathcal{I}}, \mathbb{H}_{\mathcal{J}})$ be the Legendrian link corresponding to the grid $(\mathbb{H}_{\mathcal{I}}, \mathbb{H}_{\mathcal{J}})$, and let $T(\mathbb{H}_{\mathcal{I}}, \mathbb{H}_{\mathcal{J}})$ be the transverse push-off of $L(\mathbb{H}_{\mathcal{I}}, \mathbb{H}_{\mathcal{J}})$. It was shown in \cite[Proposition 5.7]{luo2024halfgriddiagramsthompson} that the Thurston-Bennequin number $tb(L(\mathbb{H}_{\mathcal{I}}, \mathbb{H}_{\mathcal{J}}))=-n$, and the rotation number $rot(L(\mathbb{H}_{\mathcal{I}}, \mathbb{H}_{\mathcal{J}}))=0$, where $2n$ is the size of the grid. Thus, according to \cite{MR2179261} we know the self-linking number $sl(T(\mathbb{H}_{\mathcal{I}}, \mathbb{H}_{\mathcal{J}}))=tb(L(\mathbb{H}_{\mathcal{I}}, \mathbb{H}_{\mathcal{J}}))-rot(L(\mathbb{H}_{\mathcal{I}}, \mathbb{H}_{\mathcal{J}}))=-n$. On the other hand, the braid $\mathfrak{b}(\mathbb{H}_{\mathcal{I}}, \mathbb{H}_{\mathcal{J}})$ represents $T(\mathbb{H}_{\mathcal{I}}, \mathbb{H}_{\mathcal{J}})$, so its writhe minus its number of strands equals the self-linking number, $-n$. Since the braid has $n$ strands, its writhe is zero.
\end{proof}

Then Theorem \ref{thm:braid-rep-thompson-rep} follows immediately.

\begin{proof}[Proof of Theorem \ref{thm:braid-rep-thompson-rep}]
    Given $g \in \vec{F}$, let $(\mathcal{I}, \mathcal{J})$ be an s.d. $n$-partition pair representing $g$.
    
    By Lemma \ref{lem:braid-writhe-zero}, $\mathfrak{b}(\mathbb{H}_{\mathcal{I}}, \mathbb{H}_{\mathcal{J}})$ has writhe $0$. Hence
    \begin{align*}
    \langle J_{\vec{F}}(\iota(g))(\epsilon), \epsilon \rangle &= \frac{1}{(-A^2-A^{-2})^{|\mathcal{E}_+(\mathcal{J})|}} \langle cl(\mathfrak{b}(\mathbb{H}_\mathcal{I}, \mathbb{H}_\mathcal{J})) \rangle(A) \\
    &= \frac{1}{(-A^2-A^{-2})^{|\mathcal{E}_+(\mathcal{J})|-1}} V_{\mathfrak{b}(\mathbb{H}_\mathcal{I}, \mathbb{H}_\mathcal{J})}(A)
    \end{align*}

    On the other hand, we have Eq. \ref{equ: normalized Jones' polynomial for Thompson}:
    $$
    \langle \pi_J(g)(\xi), \xi \rangle = \frac{1}{(-A^2-A^{-2})^{n-1}}V_{L(\mathcal{I}, \mathcal{J})}(A)
    $$

    Recall that $cl(\mathfrak{b}(\mathbb{H}_\mathcal{I}, \mathbb{H}_\mathcal{J}))$ is isotopic to $L(\mathbb{H}_\mathcal{I}, \mathbb{H}_\mathcal{J})$, which is $L(\mathcal{I}, \mathcal{J})$ by Theorem \ref{thm: equivalent oriented links}. Note that $|\mathcal{E}_+(\mathcal{J})| = n$, then we have
    $$
    \langle \pi_J(g)(\xi), \xi \rangle = \langle J_{\vec{F}}(\iota(g))(\epsilon), \epsilon \rangle.
    $$
    

    
\end{proof}

\section{$B_{\vec{F}}$ is finitely generated} 

It is well-known \cite{MR1426438} that s.d. partitions are in one-to-one correspondence with binary trees, where the interior breakpoints of a partition correspond to the vertices (not including leaves) of a binary tree. In this section we will study trees and treat vertices as dyadic numbers.

We continue to use the sign convention on $\mathcal{E}$ defined in Section \ref{sec:half-grid}, so for each binary tree, its vertices are signed dyadic numbers. Let $s(v)$ denote the sign of a vertex $v$.

\begin{lemma} \label{lem:upper-trees}
    For any finite sequence of signs $(\sigma_1, \sigma_2,\cdots, \sigma_n)$, there exists a tree $T$ with $n$ vertices $\{v_1<v_2<\cdots<v_n\}$ such that  $s(v_i)=\sigma_i$ for all $1 \leq i \leq n$ if $\sigma_1=+$, or $s(v_i)=-\sigma_i$ for all $i$ if $\sigma_1=-$. 
\end{lemma}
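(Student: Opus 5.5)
The plan is to work entirely with the combinatorics of binary trees: first work out how the sign changes along edges of the tree, then build the tree by inserting $v_1, v_2, \ldots, v_n$ one at a time.

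\emph{Step 1: the sign rule along edges.} Under the correspondence with s.d. partitions, the root of any nonempty tree is $\frac12 = 0.1$. A vertex $a = 0.b_1\cdots b_{m-1}1$ of binary length $m$ corresponds to the subinterval $[a - 2^{-m}, a + 2^{-m}]$. Its left child is therefore $a - 2^{-m-1} = 0.b_1\cdots b_{m-1}01$, and its right child is $a + 2^{-m-1} = 0.b_1\cdots b_{m-1}11$. Passing to the left child replaces the final $1$ by $01$, so the digit sum, and hence the weight $\omega$, is unchanged. Passing to the right child replaces the final $1$ by $11$, so the weight changes by $1$. Thus a left child has the same sign as its parent, and a right child has the opposite sign. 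Every s.d. partition refining $\{0,1\}$ has $\frac12$ as a breakpoint, so the root always has one fixed sign, namely that of $\frac12$. The conventions of Section \ref{sec:half-grid} make $0$ (which is $-$) conjugate to $\frac12$, so I will take this sign to be $+$. The normalization in Lemma \ref{lem:upper-trees} is consistent with this: it asks for the smallest vertex to be $+$ after a global flip.

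As a consequence, the signs of all vertices are determined by the abstract shape of the tree: the root is $+$, and each right step flips the sign. Any abstract rooted binary tree shape with $n$ internal vertices is realized by some s.d. partition. Its vertices, read in in-order, are exactly the interior breakpoints in increasing order. So it suffices to construct an abstract tree shape whose in-order vertex signs are $(\sigma_1,\ldots,\sigma_n)$ when $\sigma_1 = +$, and $(-\sigma_1,\ldots,-\sigma_n)$ when $\sigma_1 = -$. Writing $\sigma'_i = \sigma_1\sigma_i$, the goal in both cases is to realize $(\sigma'_1,\ldots,\sigma'_n)$, where $\sigma'_1 = +$.

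\emph{Step 2: induction inserting a new maximum.} I will induct on $n$. For $n=1$, take the single root, which has sign $+ = \sigma'_1$. Suppose $T_k$ realizes $(\sigma'_1,\ldots,\sigma'_k)$. Its largest vertex $v_k$ lies on the right spine and has no right child.
\begin{itemize}
\item If $\sigma'_{k+1} = -\sigma'_k$, attach $v_{k+1}$ as the right child of $v_k$. It becomes the new maximum, its sign is $-s(v_k)$ by Step 1, and no other sign changes.
\item If $\sigma'_{k+1} = \sigma'_k$, insert $v_{k+1}$ into the right spine in the position currently occupied by $v_k$, and make the old subtree rooted at $v_k$ the left subtree of $v_{k+1}$. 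Then $v_{k+1}$ occupies the former position of $v_k$, so it inherits the sign $s(v_k)$. Also $v_k$ is now a left child of $v_{k+1}$, so its sign is unchanged. Every other vertex keeps the same sequence of left and right steps relative to its position, so its sign is preserved too. (If $v_k$ was the root, $v_{k+1}$ becomes the new root with sign $+$, and $v_k$, now its left child, remains $+$.)
\end{itemize}
In both cases $v_{k+1}$ is greater than every vertex of $T_k$ in in-order, and the signs of $v_1,\ldots,v_k$ are preserved.

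\emph{Main obstacle.} The only delicate step is the "same sign" insertion. One must check that re-rooting the subtree at $v_k$ under the new vertex changes neither the in-order (so $v_{k+1}$ really becomes the new maximum) nor any existing sign. This follows because the sign of each vertex depends only on the parity of right steps on its path from the root, and that parity is unchanged for all old vertices. One must also pin down the sign convention for $\frac12$ so that the root is $+$, as discussed in Step 1. Translating the finished tree back into dyadic numbers via Step 1 then yields the tree $T$ required in Lemma \ref{lem:upper-trees}.
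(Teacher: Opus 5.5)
Your proof is correct and is essentially the paper's argument. Both rest on the rule that a left child keeps its parent's sign and a right child flips it, with root $\tfrac12$ signed $+$. Your one-vertex-at-a-time insertion of a new maximum produces exactly the tree the paper builds recursively, where the paper takes the left chain $T^+_{j-1}$ for the initial run of $+$'s and attaches the tree of the negated tail on the right of its root: in both, the right spine runs through the last vertex of each maximal run of equal signs, and the other vertices of that run hang below it on a left chain. Your write-up also states and checks the sign rule and the normalization $s(\tfrac12)=+$ explicitly, which the paper leaves implicit.
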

\begin{proof}
First note that it suffices to prove the case for $\sigma_1=+$, otherwise we consider the sequence $(-\sigma_1, -\sigma_2,\cdots,-\sigma_n )$.

If $\sigma_i=+$ for all $1\leq i \leq n$, then we consider the tree  $T_n^+$ whose  vertex $v_n$ is the root (degree 2 vertex), with all other vertices $v_j$ lying on the left edge of the root (Figure \ref{fig:all-left-tree}).

\begin{figure}
    \centering
    \includegraphics[width=0.5\linewidth]{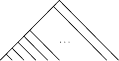}
    \caption{The tree $T^+_n$.}
    \label{fig:all-left-tree}
\end{figure}

Thus we may assume there exists some $i\neq 1$ such that $\sigma_i = -$. Then we prove the statement inductively. The base case is the sequence $(+, -)$, for which we take the tree with a vertex on the right edge of the root. Assume that the statement holds for all sequences of length less than $n$. For any sequence $(\sigma_1, \sigma_2,\cdots, \sigma_n)$, let $j$ be the smallest number such that $\sigma_j = -$. Note that $(-\sigma_j, -\sigma_{j+1}, ..., -\sigma_n)$ is a sequence of length shorter than $n$, with the first sign $+$. We attach its corresponding tree to the right edge of $T_{j-1}^+$, then the resulting tree is the desired one.
\end{proof}

\begin{lemma} \label{lem:lower-tree}
    For any pair of dyadic numbers $a < b$, there exists a tree with vertices $v_1, v_2,..., v_n$ such that for some $1\leq i<j < n$, we have $v_i = a, v_j = b$. Moreover, we can assume that the tree satisfies $s(v_1) = +$, $s(v_2)=-$, $s(v_{i+1})=-s(v_i)$, and  $ s(v_{j+1})=-s(v_j)$.
\end{lemma}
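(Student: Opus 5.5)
The plan is to build the tree as an s.d. partition and adjust it by a few local refinements, each of which inserts one breakpoint whose sign is forced. Throughout, vertices of a tree are identified with the interior breakpoints of the corresponding s.d. partition, ordered by $<$.

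The basic observation concerns splitting a subinterval. If a subinterval $[c, c+2^{-k}]$ is split, the new breakpoint $c+2^{-k-1}$ has the binary digits of $c$ followed by one extra digit $1$. Hence $\omega(c+2^{-k-1}) = \omega(c)+1$, and the new point has the opposite sign to $c$. I will use the sign convention compatible with $0$ having sign $-$ and with $0$ and $\frac12$ being conjugate, namely that $s(v)=+$ exactly when $\omega(v)=1$. I expect that pinning down this convention, and applying it consistently, is the only delicate point. If the convention turns out to be the opposite, the same construction should go through with the roles of the two inserted points at the left end adjusted accordingly.

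The construction has four steps.

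\begin{enumerate}
\item Take any s.d. partition $\mathcal{I}_0$ having both $a$ and $b$ as breakpoints. For instance, use the uniform partition of mesh $2^{-N}$, where $N$ is at least the binary length of $a$ and of $b$.

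\item In the current partition there is a subinterval $[a, a+2^{-k}]$ with $a+2^{-k}\le b$. Split it once. The breakpoint immediately after $a$ is now $a+2^{-k-1}$, which has sign $-s(a)$.

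\item Do the same at $b$: split the subinterval $[b, b+2^{-l}]$. This makes the breakpoint after $b$ equal to $b+2^{-l-1}$, with sign $-s(b)$. In particular $b$ is no longer the last vertex, which gives $j<n$.

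\item Choose $m$ large enough that $2^{-m+1}$ is smaller than every breakpoint so far, and in particular smaller than $a$. Refine the leftmost subinterval $[0,\cdot]$ repeatedly until $2^{-m}$ and $2^{-m+1}$ are consecutive breakpoints, with $2^{-m}$ the smallest interior breakpoint. Then split $[2^{-m},2^{-m+1}]$.
\end{enumerate}

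After step 4 the two smallest vertices are $v_1 = 2^{-m}$, with $\omega(v_1)=1$ and hence sign $+$, and $v_2 = 3\cdot 2^{-m-1}$, with $\omega(v_2)=2$ and hence sign $-$. The point $0$ is not an interior breakpoint, so its special sign convention never enters.

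It remains to check that the later refinements do not disturb the earlier ones. The refinements in step 4 all take place inside $[0,2^{-m+1}]$, which lies strictly to the left of $a$, so they do not change which breakpoint immediately follows $a$ or $b$. The refinement at $a$ in step 2 occurs inside $[a,b]$, so it does not affect the neighbour of $b$, and conversely.

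Finally, the indices work out. Since $a$ and $b$ are breakpoints with $a<b$, we have $a=v_i$ and $b=v_j$ with $i<j$, and $j<n$ by step 3. Since $v_1$ and $v_2$ lie below $a$, we get $i\ge 3$, so the conditions at $v_1,v_2$ and at $v_{i+1},v_{j+1}$ refer to distinct positions in the sequence. All the required sign conditions then hold, which proves the lemma.
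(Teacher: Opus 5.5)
Your proof is correct and is essentially the paper's argument: take a partition containing $a$ and $b$, then split the subinterval immediately to the right of each relevant breakpoint, so that each split produces a neighbour of opposite sign because $\omega(c+2^{-k-1})=\omega(c)+1$. Your one variation is to create a fresh $v_1=2^{-m}$ and $v_2=3\cdot 2^{-m-1}$ to the left of $a$ instead of splitting next to the existing $v_1$; this is harmless, it makes explicit what the paper leaves tacit (that $s(v_1)=+$ is automatic, since the smallest vertex is always some $2^{-m}$ of odd weight, so your fallback remark about the opposite convention is moot: under that convention the lemma would be false), and it also guarantees $i\ge 3$, which the paper's later use of $\Sigma_1$ implicitly needs.
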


\begin{proof}
    We can easily find a s.d. partition containing breakpoints $a, b$, then consider the corresponding tree with vertices $v_1, v_2, ..., v_n$. Suppose that $v_i = a$ and $v_j = b$. Now we want to fix the signs of $v_2, v_{i+1}, v_{j+1}$.

    To realize that, we just need to add an extra vertex next to each of $v_1$,$v_i$, and $v_j$ lying on the right edge.
\end{proof}

\begin{proposition} \label{prop:two-numbers-to-any-ab}
    For any pair of dyadic numbers $a < b$, there exists $f \in \vec{F}$ such that
    \begin{itemize}
        \item $f(\frac{1}{4}) = a$ and $f(\frac{1}{2})=b$, if $s(a)=s(b)=+$.
        \item $f(\frac{1}{2}) = a$ and $f(\frac{3}{4})=b$, if $s(a)=+, s(b)=-$.
        \item $f(\frac{5}{8}) = a$ and $f(\frac{3}{4})=b$, if $s(a)=s(b)=-$.
        \item $f(\frac{3}{4}) = a$ and $f(\frac{7}{8})=b$, if $s(a)=-, s(b)=+$.
    \end{itemize}
\end{proposition}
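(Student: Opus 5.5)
The plan is to construct $f$ as $\gamma^{\mathcal{I}}_{\mathcal{J}}$ for a pair of s.d. partitions $(\mathcal{I},\mathcal{J})$, equivalently a pair of binary trees $(T_-,T_+)$. The two trees must have the same number of vertices, and the vertices must correspond in order, with the prescribed source point in $T_-$ matched to $a$ in $T_+$ and likewise for $b$. The key input is Jones' characterization of $\vec{F}$ (as in \cite{MR3565428} \cite{MR4886157}): $\gamma^{\mathcal{I}}_{\mathcal{J}} \in \vec{F}$ whenever the order-preserving bijection between the vertices of the two trees preserves the signs $s(\cdot)$. Equivalently, the bijection $\mathcal{E}(\mathcal{I})\to\mathcal{E}(\mathcal{J})$ induced by $f$ preserves the weight. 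I will assume this criterion. With it, the proposition reduces to a combinatorial matching problem for signed sequences, solved by Lemma \ref{lem:upper-trees} and Lemma \ref{lem:lower-tree}.

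First I build the target tree. By Lemma \ref{lem:lower-tree}, choose a tree $T_+$ with vertices $v_1<\cdots<v_n$, with $v_i=a$ and $v_j=b$ for some $1\le i<j<n$, and with $s(v_1)=+$, $s(v_2)=-$, $s(v_{i+1})=-s(v_i)$ and $s(v_{j+1})=-s(v_j)$. Next I build a source tree $T_-$ whose vertex sign sequence is exactly $(s(v_1),\dots,s(v_n))$ and whose $i$-th and $j$-th vertices are the prescribed points among $\frac14,\frac12,\frac58,\frac34,\frac78$. The four cases correspond to the four sign patterns $(s(a),s(b))$.

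To build $T_-$, I will use a fixed small ``core'' tree containing the relevant points. For example, the tree with vertices $\frac14<\frac12<\frac58<\frac34<\frac78$ (in the right cases), grafted as needed. I then realize the arbitrary sign segments before, between, and after the core by attaching subtrees furnished by Lemma \ref{lem:upper-trees}.

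The conditions from Lemma \ref{lem:lower-tree} are what make these grafts possible. Lemma \ref{lem:upper-trees} realizes any sign sequence up to a global flip determined by the first sign. The forced alternations $s(v_2)=-s(v_1)$, $s(v_{i+1})=-s(v_i)$ and $s(v_{j+1})=-s(v_j)$ fix the first sign of each segment, so the grafted subtree produces the correct signs rather than their negatives. Concretely:
\begin{itemize}
\item the segment $v_2,\dots,v_{i-1}$ is grafted into the left part of the core;
\item the segment $v_{i+1},\dots,v_{j-1}$ is grafted into the right edge of the vertex corresponding to $a$;
\item the segment after $v_j$ is grafted into the right edge of the vertex corresponding to $b$.
\end{itemize}
I also need to check how grafting a subtree on a left or right edge shifts the signs of the inserted vertices. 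Each new vertex appends binary digits, so its weight differs from the parent's in a controlled way, and the relative signs within a grafted subtree are preserved up to one global flip.

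The main obstacle is this bookkeeping. In each of the four cases I must verify that the prescribed positions $\frac14,\frac12$ (and so on) really carry the signs $s(a),s(b)$. I must also verify that the grafting locations give the absolute, not merely relative, signs needed. I would first check the signs of the five core points directly from the weight definition: $\frac14=0.01$ gives $+$, $\frac12$ gives $+$, $\frac34=0.11$ gives $-$, $\frac58=0.101$ gives $-$, and $\frac78=0.111$ gives $+$. This matches the four case assignments in the statement. Once the core signs are confirmed, the remaining work is the case analysis of where each segment is attached, and equality of vertex counts holds by construction.
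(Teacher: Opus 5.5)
Your overall strategy is the paper's. You take the target tree from Lemma~\ref{lem:lower-tree}, take as source tree the smallest tree containing the prescribed points, and fill the gaps with subtrees from Lemma~\ref{lem:upper-trees}. Grafting a tree into an s.d.\ interval with binary prefix $x$ shifts all its signs by $\omega(x)$.

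The one concrete attachment rule you commit to fails in the first two cases. That rule is that the segment $v_2,\dots,v_{i-1}$ goes to the left part of the core, with $v_1$ absorbed by a core vertex. In cases 1 and 2 the core ($\{\frac14,\frac12\}$, resp.\ $\{\frac12,\frac34\}$) has no vertex to the left of the preimage of $a$. So $v_1$ has no partner, and the preimage of $a$ lands in position $i-1$ instead of $i$. Moreover, the left region $[0,\frac14]$, resp.\ $[0,\frac12]$, has a prefix of weight $0$. Any subtree grafted there therefore begins with a vertex $2^{-k}$ of sign $+$, whereas $s(v_2)=-$, so Lemma~\ref{lem:upper-trees} would produce the negated sequence.

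The fix is the paper's distinction between $\Sigma_1=(s(v_1),\dots,s(v_{i-1}))$ and $\Sigma_1'=(s(v_2),\dots,s(v_{i-1}))$.

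\emph{Cases 1 and 2.} Graft a tree realizing $\Sigma_1$ into $[0,\frac14]$, resp.\ $[0,\frac12]$, with no flip. This is possible because $s(v_1)=+$ automatically.

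\emph{Cases 3 and 4.} The point $\frac12$ is a forced ancestor of $\frac58$, resp.\ $\frac34$, and has sign $+$. The natural choice is to pair it with $v_1$ and leave $[0,\frac12]$ empty. Then graft into $[\frac12,\frac58]$, resp.\ $[\frac12,\frac34]$, whose prefixes $100$, $10$ have odd weight. So you apply Lemma~\ref{lem:upper-trees} to $-\Sigma_1'$, which starts with $+$ exactly because $s(v_2)=-$.

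\emph{The remaining two gaps.} The same principle governs them. The gap immediately to the right of a core point $c$ is an s.d.\ interval with left endpoint $c$, so the first vertex grafted there has sign $-s(c)$. This is what $s(v_{i+1})=-s(v_i)$ and $s(v_{j+1})=-s(v_j)$ match. Concretely, with $\Sigma_2=(s(v_{i+1}),\dots,s(v_{j-1}))$ and $\Sigma_3=(s(v_{j+1}),\dots,s(v_n))$, you negate both in case 1, only $\Sigma_2$ in case 2, neither in case 3, and only $\Sigma_3$ in case 4.

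With this correction the rest of your plan goes through as described.
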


\begin{proof}
    We start with the first case that $s(a) = s(b) = +$. It suffices to find two trees $T^{up}$ with vertices $\{v_1^{up}<\cdots<v_n^{up}\}$  and $T^{down}$ with vertices $\{v_1^{down}<\cdots<v_n^{down}\}$ satisfying the following: 
    \begin{enumerate} 
        \item The sign $s(v_k^{up})=s(v_k^{down})$ for each $1\leq k \leq n$.
        \item There exist $1 \leq i,j \leq n$ such that $v_i^{up}=\frac{1}{4}$, $v_j^{up}=\frac{1}{2}$ and $v_i^{down}=a$, $v_j^{down}=b$.
    \end{enumerate}

Then this pair of trees $(T^{up}, T^{down})$ gives us an element $f \in \vec{F}$ sending $\frac{1}{4}$ to $a$ and sending $\frac{1}{2}$ to $b$. We will use the above two lemmas to construct these two trees. 
    
We let $T^{down}$ be the tree given by Lemma \ref{lem:lower-tree}, such that $v_i^{down} = a, v_j^{down} = b$ and $s(v_1^{down})=+, s(v_2^{down})=-, s(v_{i+1}^{down})=-, s(v_{j+1}^{down})=-$. Consider the three sub-sequences separated by $v_i^{down}$ and $v_j^{down}$. They induce three sequences of signs
    \begin{align*}
        & \Sigma_1 = \{s(v_1^{down}) = +, s(v_2^{down}) = -, \cdots, s(v_{i-1}^{down})\}\\
        & \Sigma_2 = \{s(v_{i+1}^{down}) = -, s(v_{i+2}^{down}), \cdots, s(v_{j-1}^{down})\} \\
        & \Sigma_3 = \{s(v_{j+1}^{down}) = -, s(v_{j+2}^{down}), \cdots, s(v_n^{down})\}
    \end{align*}
    
    By Lemma \ref{lem:upper-trees}, we can find a tree $T_1^{up}$ inducing $\Sigma_1$, a tree $T_2^{up}$ inducing $-\Sigma_2$ (because $s(v_{i+1}^{down}) = -$), and a tree $T_3^{up}$ inducing $-\Sigma_3$ (because $s(v_{j+1}^{down}) = -$). Let $T^{up}$ be the tree constructed from $T_1^{up}, T_2^{up}$ and $T_3^{up}$ as Figure \ref{fig:tree-plus-plus} shows. It is not hard to check that the pair $(T^{up}, T^{down})$ satisfies the above two conditions.

    \begin{figure}
        \centering

        \begin{subfigure}{0.45\textwidth}
            \includegraphics[width=\textwidth]{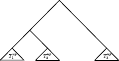}
            \caption{}
            \label{fig:tree-plus-plus}
        \end{subfigure}
        \hfill
        \begin{subfigure}{0.45\textwidth}
        \includegraphics[width=\textwidth]{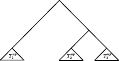}
        \caption{}
        \label{fig:tree-plus-minus}
        \end{subfigure}
        \begin{subfigure}{0.45\textwidth}
        \includegraphics[width=\textwidth]{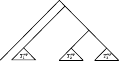}
        \caption{}
        \label{fig:tree-minus-plus}
        \end{subfigure}
        \hfill
        \begin{subfigure}{0.45\textwidth}
        \includegraphics[width=\textwidth]{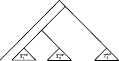}
        \caption{}
        \label{fig:tree-minus-minus}
        \end{subfigure}
        \caption{Trees (A), (B), (C) and (D) correspond to the four possible sign combinations of the points $a$ and $b$.}
    \end{figure}

    The second case can be proved similarly, but with $T_3^{up}$ inducing $\Sigma_3$ instead (because $s(v_{j+1}^{down}) = +$). The tree $T^{up}$ is constructed from $T_1^{up}, T_2^{up}$ and $T_3^{up}$ as Figure \ref{fig:tree-plus-minus} shows.

    The proof for the third and fourth cases is slightly different from the first two. After we determine the tree $T^{down}$, we let the first sequence of signs instead be
    $$
    \Sigma_1' = \{s(v_2^{down}) = -, \cdots, s(v_{i-1}^{down})\}
    $$
    without $s(v_1^{down})$. Then consider the three sequences $\Sigma_1', \Sigma_2$ and $\Sigma_3$.

    In the third case, we find a tree $T_1^{up}$ inducing $-\Sigma_1'$, a tree $T_2^{up}$ inducing $\Sigma_2$ and a tree $T_3^{up}$ inducing $\Sigma_3$. The tree $T^{up}$ is given by Figure \ref{fig:tree-minus-plus}.

    In the fourth case, the tree $T_3^{up}$ induces $-\Sigma_3$ instead, and the tree $T^{up}$ is given by Figure \ref{fig:tree-minus-minus}. This completes the proof.
\end{proof}

Now we are ready to prove that the oriented dyadic braid group $B_{\vec{F}}$ is finitely generated.
\begin{proof}[Proof of Theorem \ref{thm: B_F is finitely generated}]
    Recall that $B_{\vec{F}}$ has braid generators $\sigma_{ab}$ for any two dyadic rational numbers $a,b$, and $\vec{F}$ generators $g_i$ for any nonnegative integer $i.$ We claim that $B_{\vec{F}}$ can be finitely generated by
    $$
    \sigma_{\frac{1}{4} \frac{1}{2}}, \sigma_{\frac{1}{2} \frac{3}{4}}, \sigma_{\frac{5}{8} \frac{3}{4}}, \sigma_{\frac{3}{4} \frac{7}{8}}, g_0, g_1, g_2
    $$
    
    We just need to show that any $\sigma_{ab}$ and $g_i$ can be generated by the above $7$ generators.

    By \cite{MR3565428}, $\vec{F}$ is isomorphic to the $3$rd Brown-Thompson group $F_3$, which has presentation
    $$
    \langle y_0, y_1, y_2, \cdots \mid y_n y_l = y_l y_{n+2}, \textup{for $l < n$} \rangle
    $$

    Note that these generators and relations are exactly the $\vec{F}$ generators $g_i$ and the $\vec{F}$ relations in Definition \ref{def:oriented-dyadic-braid-group}. Since $F_3$ can be generated by the first three generators $y_0, y_1, y_2$, any $\vec{F}$ generator $g_i$ can be generated by $g_0, g_1, g_2$.

    For any braid generator $\sigma_{ab}$ with $s(a)=s(b)=+$, there exists $f \in \vec{F}$ such that $f(\frac{1}{4})=a$ and $f(\frac{1}{2})=b$ by Proposition \ref{prop:two-numbers-to-any-ab}. From the previous argument, $f$ can be viewed as an element in $B_{\vec{F}}$ which can be generated by $g_0, g_1, g_2$. Hence, by the braid $\vec{F}$ relations, we have $\sigma_{ab} = \sigma_{f(\frac{1}{4})f(\frac{1}{2})} = f \sigma_{\frac{1}{4}\frac{1}{2}} f^{-1}$, so $\sigma_{ab}$ is generated by $g_0,g_1,g_2,\sigma_{\frac{1}{4}\frac{1}{2}}$.

    The proof is similar for $a, b$ with other sign combinations. 
\end{proof}

\bibliographystyle{alpha}
\bibliography{bib}

\end{document}